\def\eqref#1{equation~(\ref{#1})}
\def\1{\bm{1}}
\def\rr{{\textnormal{r}}}
\def\rs{{\textnormal{s}}}
\def\rx{{\textnormal{x}}}
\def\rz{{\textnormal{z}}}
\def\rvg{{\mathbf{g}}}
\def\vzeta{{\bm{\zeta}}}
\def\ve{{\bm{e}}}
\def\vg{{\bm{g}}}
\def\vs{{\bm{s}}}
\def\vu{{\bm{u}}}
\def\vv{{\bm{v}}}
\def\vx{{\bm{x}}}
\def\vy{{\bm{y}}}
\def\mA{{\bm{A}}}
\def\mG{{\bm{G}}}
\def\mH{{\bm{H}}}
\def\mI{{\bm{I}}}
\def\mM{{\bm{M}}}
\def\mN{{\bm{N}}}
\def\mU{{\bm{U}}}
\def\mV{{\bm{V}}}
\def\mSigma{{\bm{\Sigma}}}
\DeclareMathAlphabet{\mathsfit}{\encodingdefault}{\sfdefault}{m}{sl}
\SetMathAlphabet{\mathsfit}{bold}{\encodingdefault}{\sfdefault}{bx}{n}
\def\sB{{\mathbb{B}}}
\def\sI{{\mathbb{I}}}
\def\sR{{\mathbb{R}}}
\newcommand{\grad}{\ensuremath{\nabla}}
\newcommand*\circled[1]{\tikz[baseline=(char.base)]{
            \node[shape=circle,draw,inner sep=1pt] (char) {\tiny{#1}};}}
\def\thickhline{%
  \noalign{\ifnum0=`}\fi\hrule \@height \thickarrayrulewidth \futurelet
   \reserved@a\@xthickhline}
\def\@xthickhline{\ifx\reserved@a\thickhline
               \vskip\doublerulesep
               \vskip-\thickarrayrulewidth
             \fi
      \ifnum0=`{\fi}}
\newlength{\thickarrayrulewidth}
\newcommand{\tabitem}{~~\llap{\textbullet}~~}
\newtheorem{lemma}{Lemma}[section]
\newtheorem{theorem}[lemma]{Theorem}
\newtheorem{corollary}[lemma]{Corollary}
\newtheorem*{remark*}{Remark}
\newtheorem{assumption}{Assumption}
\newtheorem{definition}{Definition}
\newcommand\mathcircled[1]{%
  \mathpalette\@mathcircled{#1}%
}
\newcommand\@mathcircled[2]{%
  \tikz[baseline=(math.base)] \node[draw,circle,inner sep=1pt] (math) {$\m@th#1#2$};%
}
\newcommand{\radagrad}{SHAdaGrad\xspace}
\title{Adaptive Gradient Methods Can Be Provably Faster than SGD after Finite Epochs}
\author{
  Xunpeng Huang\\
  Bytedance AI Lab\\
  \texttt{huangxunpeng@bytedance.com}
    \And
  Hao Zhou\\
  Bytedance AI Lab\\
  \texttt{zhouhao.nlp@bytedance.com}
    \And
  Runxin Xu\\
  Peking University\\
  \texttt{runxinxu@gmail.com}
    \And
  Zhe Wang\\
  Ohio State University\\
  \texttt{wang.10982@osu.edu}
    \And
  Lei Li\thanks{Corresponding author}\\
  Bytedance AI Lab\\
  Beijing, China\\
  \texttt{lileilab@bytedance.com}
}
\begin{document}
\maketitle

\begin{abstract}
Adaptive gradient methods have attracted much attention of machine learning communities due to the high efficiency.
However their acceleration effect in practice, especially in neural network training, is hard to analyze, theoretically.
The huge gap between  theoretical convergence results and practical performances prevents further understanding of existing optimizers and the development of more advanced optimization methods.
In this paper, we provide adaptive gradient methods a novel analysis with an additional mild assumption, and revise AdaGrad to \radagrad for matching a better provable convergence rate.
To find an $\epsilon$-approximate first-order stationary point in non-convex objectives, we prove random shuffling \radagrad achieves a $\tilde{O}(T^{-1/2})$ convergence rate, which is significantly improved by factors $\tilde{O}(T^{-1/4})$ and $\tilde{O}(T^{-1/6})$ compared with existing adaptive gradient methods and random shuffling SGD, respectively.
To the best of our knowledge, it is the first time to demonstrate that adaptive gradient methods can deterministically be faster than SGD after finite epochs.
Furthermore, we conduct comprehensive experiments to validate the additional mild assumption and the acceleration effect benefited from second moments and random shuffling.
\end{abstract}

\section{Introduction}
\label{sec:intro}
Stochastic optimization is critical for large scale machine learning, formally, which aims to solve the following finite sum minimization problem:
\begin{equation}
    \label{eq:obj_intro}
    \min_{\vx \in \sR^d}\quad f(\vx) = \frac{1}{n}\sum_{i=1}^n f_i(\vx),
\end{equation}
where each function $f_i:\mathbb{R}^d\ \rightarrow\ \mathbb{R}$  is smooth and possibly non-convex.
This problem covers a wide range of models in machine learning, including deep neural networks (DNNs).
When training DNNs, adaptive gradient methods~\cite{duchi2011adaptive,kingma2014adam,
reddi2018on} are usually much faster than stochastic gradient descent (SGD) in practice, while their theoretical convergence is the same as or even worse than SGD in the non-convex setting~\cite{chen2018on,zhou2018convergence,zaheer2018adaptive,ward2018adagrad}.
This inconsistency between practical performances and theoretical convergence prevents further understanding of existing optimizers and the development of more advanced optimization methods.
Thus, closing the gap between practical performances and theoretical results is a very important issue. 

Various studies attempt to bridge such a gap from different perspectives.
Although these previous work offer very promising insights, they hardly explain why adaptive gradient methods can be faster than SGD theoretically.
For example, \cite{reddi2018on} corrects errors in regret convergence analysis, \cite{chen2018sadagrad,wang2019sadam} provide the convergence analysis for achieving the global optimum in strongly convex optimization, and \cite{chen2018on,zhou2018convergence,zaheer2018adaptive,ward2018adagrad} investigate the convergence rate for achieving first-order stationary points (FSPs) in non-convex setting to match assumptions in practice.
All these studies can only provide similar convergence results to SGD, i.e., $O(T^{-1/2})$ for regret, $O(T^{-1})$ for achieving global optimum and $\tilde{O}(T^{-1/4})$ for achieving FSPs.

We argue that previous studies ignore the effects of random shuffling (sampling without replacement) in their analysis, which could accelerate SGD according to~\cite{haochen2018random} and ~\cite{nguyen2020unified}. 
Furthermore, the sampling strategy of mini-batch gradient in each epoch is random shuffling rather than uniformed sampling as previous work assumed when DNNs are trained in practice. 
With some indirect evidences, e.g., the geometric properties of second moments proposed in~\cite{huang2020acutum}, we suspect that adaptive gradient methods may have better adaptability  compared with SGD.
Thus, we target at filling the gap between practical performances and theoretical results for adaptive gradient methods via investigating the convergence rate for achieving FSPs in non-convex and \emph{random shuffling} settings.

In this paper, we show that, with an additional mild assumption, adaptive gradient methods can obtain an $\tilde{O}(T^{-1/2})$ convergence rate which outperforms previous best-known results.
Specifically, it is $O(T^{-1/3})$ for random shuffling SGD (\cite{haochen2018random,nguyen2020unified}), $O(T^{-1/4})$ for vanilla SGD (\cite{allen2018natasha}) and $\tilde{O}(T^{-1/4})$ for Adam-type optimizers (\cite{chen2018on,zhou2018convergence,zaheer2018adaptive}).
From a theoretical point of view, we explain improvement from two observations.
First, the combination of some full gradient perturbations and the second moment matrices can provide tighter lower bounds for sufficient descents in the random shuffling setting.
Second, tighter sufficient descent lower bounds improve the convergence rate by weakening sufficient conditions required for the convergence.
In practice, we first revise AdaGrad with full matrices~\cite{duchi2011adaptive} (AdaGrad\_F) to \radagrad for theoretical proving convenience.
Then, we conduct comprehensive experiments  to convince readers the mild assumption in our proof, and validate the acceleration effect from the introduction of second moments and random shuffling.
To the best of our knowledge, it is the first time to explain adaptive gradient methods can be deterministically faster than SGD after finite epochs both in theory and in practice.
The main contributions of this paper are as follows:
\begin{itemize}
    \item We are the first to analyze the convergence rate of adaptive gradient methods for achieving FSPs in non-convex and random shuffling settings, and provide an $\tilde{O}(T^{-1/2})$ convergence rate to \radagrad, a minor revision of AdaGrad\_F, with an additional mild assumption. 
    \item We conduct comprehensive experiments to validate our mild assumption, and present the acceleration effect taken from random shuffling and second moments.
\end{itemize}

\section{Related Work}
\label{sec:related}
In this section, we only introduce the work highly related to the analysis of adaptive gradient methods and  the random shuffling strategy due to the space limitation. 
We briefly describe the difference between the existing work and ours, and list all of the convergence results for comparison.

\textbf{Analysis for adaptive gradient methods}
Compared with classic optimization methods for non-convex objectives, e.g., SGD~\cite{ge2015escaping}, SVRG~\cite{lei2017non,allen2018natasha} and SPIDER~\cite{fang2018spider,wang2019spiderboost}, adaptive gradient methods, e.g., Adagrad~\cite{duchi2011adaptive}, Adam~\cite{kingma2014adam} and AMSGrad~\cite{reddi2018on}, are more popular due to their excellent practical performances for neural network training.
These adaptive gradient methods are originally proposed to solve online learning problems, and focus on the convergence analysis of their regret for convex objective functions.
To further understand online learning optimizers in neural network training, 
the convergence analysis for non-convex problems are highly desired.
Therefore, \cite{chen2018on}, \cite{zhou2018convergence}, \cite{zaheer2018adaptive} and \cite{ward2018adagrad} analyze the convergence rate for achieving first-order stationary points (FSPs).
Besides, they proposed a series of novel methods for faster convergence and better generalization.
However, the convergence results of the proposed methods are usually $\tilde{O}(T^{-1/4})$, which not better than the vanilla SGD in non-convex settings. 

\textbf{Analysis for random shuffling in optimization.}
In neural network training, instances are usually sent to optimizers after random shuffling.
With such a pre-processing, random shuffling is considered to be an important ingredient to capture the practical performance of optimization methods in theoretical analysis. 
Furthermore, the convergence of random shuffling SGD and vanilla SGD is quite different.
Compared with the uniform sampling for calculated gradient at each iteration in vanilla SGD, \cite{haochen2018random,nagaraj2019sgd} and \cite{nguyen2020unified} have fully explained advantages of random shuffling utilization in the convergence rate.
They improve the convergence rate from $O(T^{-1})$ and $O(T^{-1/4})$ to $O(T^{-2})$ and $\tilde{O}(T^{-1/3})$ for achieving FSPs in strongly convex and non-convex settings, respectively.

From related work, one may notice that the convergence results of adaptive gradient methods in non-convex and random shuffling settings are still understudied.
Within an additional mild assumption, we improve the order of the convergence rate by a factor $\tilde{O}(T^{-1/4})$ compared with vanilla SGD and existing Adam-type optimizers, and $\tilde{O}(T^{-1/6})$ compared with random shuffling SGD.
(See Table~\ref{tab:convergence_results_comp} for the details comparison)
\begin{table}[t]
    \caption{\small Convergence rate comparison of SGD, Adam-type optimizers, random shuffling SGD and \radagrad, where $T$ denotes the number of epoch, and $n$, i.e., the number of instances is considered as a constant.\label{tab:convergence_results_comp}}
    \centering
    \begin{tabular}{|l|l|l|c|}
        \thickhline
            \multicolumn{2}{|c|}{Algorithm} & Assumptions (L-smoothness+) & Convergence Results\\ 
        \hline
            \multicolumn{2}{|l|}{
                \begin{tabular}{@{}c@{}}
                vanilla SGD
                \end{tabular}} & 
            \begin{tabular}{@{}l@{}}
                \tabitem $\sigma^2$ bounded variance \\ 
            \end{tabular} & 
            \begin{tabular}{@{}l@{}}
                $\mathbb{E}\left[\left\|\rvg_t\right\|^2\right] = O\left(\frac{1}{\sqrt{T}}\right)$ \\
            \end{tabular} \\
        \hline
            \multirow{4}{*}{
                \rotatebox{90}{\begin{tabular}{@{}l@{}}
                    Adaptive Gradient\\ Methods Analysis
                \end{tabular}}
            } & 
            \begin{tabular}{@{}l@{}}
                AMSGrad,\\ AdaFom~\cite{chen2018on} 
            \end{tabular} & 
            \begin{tabular}{@{}l@{}}
                \tabitem bounded gradients\\ 
                \tabitem initial gradient coordinate \\ lower bound
            \end{tabular} & 
            \begin{tabular}{@{}l@{}}
                $\min\ \mathbb{E}\left[\left\|\rvg_t\right\|^2\right] = O\left(\frac{\ln T + d^2}{\sqrt{T}}\right)$
            \end{tabular} \\
        \cline{2-4}
            &
            \begin{tabular}{@{}l@{}}
                AMSGrad,\\
                Padam~\cite{zhou2018convergence} 
            \end{tabular} & 
            \begin{tabular}{@{}l@{}}
                \tabitem bounded gradients\\ 
                \tabitem gradient sparsity\\ 
            \end{tabular} & 
            \begin{tabular}{@{}l@{}}
                $\mathbb{E}\left[\left\|\rvg_t\right\|^2\right] = O\left(\sqrt{\frac{d}{T}}+\frac{d}{T}\right)$ \\
            \end{tabular} \\
        \cline{2-4}
            &
            \begin{tabular}{@{}l@{}}
                RMSProp,\\ Yogi~\cite{zaheer2018adaptive} 
            \end{tabular} & 
            \begin{tabular}{@{}l@{}}
                \tabitem bounded gradients\\ 
                \tabitem $\sigma^2$ bounded variance\\ 
            \end{tabular} & 
            \begin{tabular}{@{}l@{}}
                $\mathbb{E}\left[\left\|\rvg_t\right\|^2\right] = O\left(\frac{1}{T} + \sigma^2\right)$ \\
            \end{tabular} \\
        \cline{2-4}
            &
            \begin{tabular}{@{}l@{}}
                AdaGrad-\\ NORM~\cite{ward2018adagrad} 
            \end{tabular} & 
            \begin{tabular}{@{}l@{}}
                \tabitem bounded gradients\\
                \tabitem $\sigma^2$ bounded variance
            \end{tabular} & 
            \begin{tabular}{@{}l@{}}
                $\mathbb{E}\left[\left\|\rvg_t\right\|^2\right] = O\left(\frac{\ln T}{\sqrt{T}}\right)$
            \end{tabular} \\
        \cline{2-4}
            &
            \begin{tabular}{@{}l@{}}
                GGT~\cite{agarwal2019efficient} 
            \end{tabular} & 
            \begin{tabular}{@{}l@{}}
                \tabitem $\sigma^2$ bounded variance
            \end{tabular} & 
            \begin{tabular}{@{}l@{}}
                $\mathbb{E}\left[\left\|\rvg_t\right\|\right] = O\left(T^{-1/4}\right)$
            \end{tabular} \\
        \hline
            \multirow{2}{*}{
                \rotatebox{90}{\begin{tabular}{@{}c@{}}
                    Shuffling\\ Analysis
                \end{tabular}}
            } & 
            \begin{tabular}{@{}l@{}}
                Random\\ Shuffling\\ SGD~\cite{haochen2018random} 
            \end{tabular} & 
            \begin{tabular}{@{}l@{}}
                \tabitem strongly convex functions\\
                \tabitem bounded gradients\\ 
                \tabitem Hessian smoothness\\
            \end{tabular} & 
            \begin{tabular}{@{}l@{}}
                $\mathbb{E}\left[\left\|\vx_T - \vx^*\right\|^2\right] = O\left(\frac{1}{T^2} \right)$ 
            \end{tabular} \\
        \cline{2-4}
            &
            \begin{tabular}{@{}l@{}}
                Random\\ Shuffling\\ SGD~\cite{nguyen2020unified} 
            \end{tabular} & 
            \begin{tabular}{@{}l@{}}
                \tabitem bounded gradients\\
            \end{tabular} & 
            \begin{tabular}{@{}l@{}}
                $\mathbb{E}\left[\left\|\rvg_t\right\|^2\right] = O\left(\frac{\ln T}{T^{2/3}}\right)$ \\
            \end{tabular} \\
        \hline
            \multicolumn{2}{|l|}{
            \begin{tabular}{@{}c@{}}
                \radagrad
                (\textcolor{red}{ours})\\
                (Random SHuffling\\ AdaGrad)
                
            \end{tabular}} & 
            \begin{tabular}{@{}l@{}}
                \tabitem bounded gradients\\ 
                \tabitem $\sigma^2$ bounded variance\\
                \tabitem Outer product matrices\\ with full column rank and\\ bounded condition numbers
            \end{tabular} & 
            \begin{tabular}{@{}l@{}}
                $\color{red} \mathbb{E}\left[\left\|\rvg_t\right\|\right] = O\left(\frac{\sqrt{d}\ln T}{\sqrt{T}}\right)$
            \end{tabular} \\
        \hline
    \end{tabular}
    \vspace{-10pt}
\end{table}

\section{Notation and Preliminaries}
\label{sec:not}
In this section, we first introduce notation and preliminaries about objective functions, random shuffling and optimization methods including AdaGrad with full matrices~\cite{duchi2011adaptive} (AdaGrad\_F) and \radagrad.
Then, we list the commonly used assumptions required for the convergence rate analysis, and define the sufficient descent for the convenience of later explanation.

\textbf{Notation of objective functions.}
The objective function is defined in  eq.~\ref{eq:obj_intro},
where $n$ and $\grad f_i(\vx)$ denote the number of instances and the stochastic gradient for the $i$-th instance, respectively.
Besides, we call $\vx\in \mathbb{R}^d$ an $\epsilon$-approximate first-order stationary point, or simply an \emph{FSP}, if the gradient norm at $\vx$ satisfies $\left\|\grad f(\vx)\right\|\le \epsilon.$

\textbf{Notation of random shuffling.} 
Random shuffling is a sampling strategy to choose the stochastic mini-batch gradient at each iteration, which is different from the uniform sampling in vanilla SGD. 
Specifically, before the $t$-th epoch begins, random shuffling samples a random permutation $\sigma$ of the $n$ function uniformly and independently, and partitions $\sigma$ into several mini-batches $\left\{\sB^t_1, \sB^t_2, \ldots, \sB^t_m\right\}$ which satisfies $\sB^t_1\cup\sB^t_2\cup\ldots\cup\sB^t_m = \sI_n$ and $\sB^t_j\cap\sB^t_k =\emptyset, \forall j \not= k$. 
Then, the mini-batch gradient calculated at iteration $i$ in this epoch is denoted as
\begin{equation}
    \begin{aligned}[b]
        \grad f_{\sB^t_i}(\vx) \coloneqq \frac{1}{\left|\sB^t_i\right|}\sum_{k\in \sB^t_i}\grad f_k(\vx).
    \end{aligned}
\end{equation}
Without loss of generality, we set $|\sB_1^t|=|\sB_2^t| =\ldots = |\sB_m^t|$, i.e., all of the mini-batches have the same number of instances, in the following sections.

\textbf{Notation of optimization methods.}
We denote
$\vx_j^i$ as the parameter at the $j$-th iteration of the $i$-th epoch and 
\begin{equation}
    \label{eq:H_matrix_definition}
    \mH_{i,t} \coloneqq \left[\grad f_{\sB_1^t}(\vx_1^t), \grad f_{\sB_2^t}(\vx_2^t),\ldots, \grad f_{\sB_i^t}(\vx_i^t)\right]\in \sR^{d\times i},\quad i\le m,
\end{equation}
where $m$ and $d$ are the number of iterations in each epoch and the dimension of the parameters, respectively.
With the definition of $\mH_{i,t}$, we define the matrix
\begin{equation}
    \label{eq:G_matrix_definition}
    \mG_{i,t} \coloneqq \sum\limits_{\tau=1}^{t-1}\mH_{m,\tau}\mH_{m,\tau}^\top+\mH_{i,t}\mH_{i,t}^\top+\frac{\delta_{i,t}}{\Gamma}\mI,
\end{equation}  
where $\delta_{i,t}$ and $\Gamma$ are the perturbation and the scaling hyper-parameter to keep the positive-definite property for $\mG_{m,t}$. 
For any real matrix $\mM$, the maximum, the minimum and the $i$-th non-zero singular value are denoted as $\lambda_{\mathrm{min}}(\mM), \lambda_{\mathrm{max}}(\mM), \lambda_{i}(\mM)$, respectively.

Then, the iteration paradigm of both AdaGrad\_F and its variant \radagrad can be formulated as
\begin{equation}
    \label{eq:adagrad_update}
    \begin{split}
        \vx^t_{i+1} = \vx^t_i - \eta_{i,t}\mG^{-\frac{1}{2}}_{i,t}\grad f_{\sB_i^t}(\vx_i^t),
    \end{split}
\end{equation}
where $\frac{\delta_{i,t}}{\Gamma}$ in $\mG_{i,t}$ is a constant for AdaGrad\_F while adaptive for shuffled AdaGrad (\radagrad).

\textbf{Main assumptions.}
We list commonly used assumptions~\cite{fang2018spider, zaheer2018adaptive,ward2018adagrad} in the typical analysis as follows:
\begin{assumption}
    \label{ass:comm_ass_pre}
    We assume the following
    \begin{enumerate}
        \item The $\Delta\coloneqq f(\vx_1^1)-f^* <\infty$ where $f^*\coloneqq\inf_{\vx\in \sR^{d}}f(\vx)$ is the global infimum value of $f(\vx)$.
        \item \emph{(L-Smooth Assumption)} The component function $f_i(\vx)$ is $L$-smooth, i.e., for all $\vx,\vy\in \sR^{d}$ and $i\in \sI_n$, $\left\|\grad f_i(\vx)-\grad f_i(\vy)\right\|\le L\left\|\vx-\vy\right\|$.
        \item \emph{(Variance Bounded Assumption)} The stochastic gradient has a bounded variance, i.e., for any $i\in \sI_n$, $\mathbb{E}_i\left[\left\|\grad f_i(\vx) - \grad f(\vx)\right\|^2\right]\ge c_\sigma^2$.
        \item \emph{(Gradient Bounded Assumption)} The norm of stochastic gradient is upper bounded, i.e., for any $i\in \sI_n$, $\left\|\grad f_i(\vx)\right\|\le G$.
    \end{enumerate}
\end{assumption}

With L-Smooth Assumption in Assumption~\ref{ass:comm_ass_pre}, we next introduce the definition of sufficient descent, which plays an important role for understanding the core idea of this paper.
\begin{definition}
    \label{def:sufficient_descent}
    We denote the sufficient descent as the deterministic negative term in RHS of L-Smooth inequality about the objective function.
\end{definition}
For example, if we set the step size of \radagrad to be a fixed constant, we provide the sufficient descent about $\grad f(\vx_1^t)$ as follows.
\begin{equation}
    \label{ineq:convergence_bottleneck_1}
    \begin{aligned}[b]
        f(\vx_{m+1}^t)\mathop{\le}^{\circled{1}} & f(\vx_1^t) + \grad f^\top(\vx_1^t)\left(\vx_{m+1}^t - \vx_1^t\right) + \frac{L}{2}\left\|\vx_{m+1}-\vx_1^t\right\|^2\\
        \mathop{=}^{\circled{2}}& f(\vx_1^t) - \grad f^\top(\vx_1^t)\left[\sum\limits_{i=1}^m \eta\mG^{-\frac{1}{2}}_{i,t}\grad f_{\sB_i^t}(\vx_i^t) \right] + \frac{L}{2}\left\|\vx_{m+1}-\vx_1^t\right\|^2\\
        =&f(\vx_1^t) + \underbrace{\eta \grad f^\top(\vx_1^t)\left[\sum\limits_{i=1}^m\mG^{-\frac{1}{2}}_{m,t} \grad f_{\sB_i^t}(\vx_1^t)  -\sum\limits_{i=1}^m \mG^{-\frac{1}{2}}_{i,t}\grad f_{\sB_i^t}(\vx_i^t) \right]}_{\mathrm{a\ positive\ upper\ bound}} \\
        & +\underbrace{\frac{L}{2}\left\|\vx_{m+1}-\vx_1^t\right\|^2 }_{\mathrm{a\ positive\ upper\ bound}} - \underbrace{m\eta \grad f^\top(\vx_1^t)\mG_{m,t}^{-\frac{1}{2}}\grad f(\vx_1^t)}_{\mathrm{sufficient\ descent}},
    \end{aligned}
\end{equation}
where $\circled{1}$ follows from  L-Smooth Assumption in Assumption~\ref{ass:comm_ass_pre}, and $\circled{2}$ follows from eq.~\ref{eq:adagrad_update}.

\section{Core Idea: Reducible Gradient Perturbation Sequence}
\label{sec:rgps_ci}
In this section, we introduce the underlying ideas behind the convergence rate improvement for achieving first-order stationary points (FSPs) in non-convex optimization.
We introduce the concept of Reducible Gradient Perturbation Sequence (RGPS) which is defined as
\begin{equation}
    \label{eq:consturcted_auxiliary_sequence}
    \begin{split}
        \vs_t = \frac{1}{m}\sum\limits_{i=1}^m \grad f_{\sB_i^t}(\vx_i^t),\quad t\in \sI_T.
    \end{split}
\end{equation}
In random shuffling setting, RGPS can establish strong connections with both the update paradigm of adaptive gradient methods and full gradients, which provides the sufficient descent a $\Theta(\|\grad f(\vx_1^{\tau})\|)$ lower bound rather than the common result $\Theta(\|\grad f(\vx_1^{\tau})\|^2)$.
Besides, the upper bound $U(T)$ of sufficient descents in our analysis is similar to previous work, which means the sufficient conditions for convergence only request $U(T) \le \epsilon$ rather than $U(T) \le \epsilon^2$.
Hence, a better provable convergence rate can be obtained by introducing RGPS.
Specifically, we take AdaGrad\_F~\cite{duchi2011adaptive} and \radagrad as examples to explain how RGPS works in the convergence analysis, and organize the details guided by answering the following two questions
\begin{enumerate}
    \item How \radagrad obtains a tighter sufficient descent lower bound with RGPS?
    \item How the tighter sufficient descent lower bound benefits the convergence in \radagrad?
\end{enumerate}

\subsection{RGPS is a Coupling of Gradients and Sufficient Descents}
\label{sec:RGPS_sufficient_descent}
In this section, we answer the first question proposed in section~\ref{sec:rgps_ci}. 
We note that RGPS can establish strong connection with both the full gradient, i.e., $\grad f(\vx_1^t)$, and the sufficient descent about $\vs_t$.
Scaling the sufficient descent about $\vs_t$ through RPGS can utilize the properties of $\mG_{m,t}$ to the full potential, and obtain a tighter lower bound compared with investigating the sufficient descent about $\grad f(\vx_1^t)$ directly.

In particular, we provide two lemmas to explain that RGPS is a coupling of the full gradient sequence and the sufficient descent about $\vs_t$. 
\begin{lemma}
    \label{lem:auxiliary_sequence_closeness}
    Suppose Assumption~\ref{ass:comm_ass_pre} and Assumption~\ref{ass:H_full_column_rank_ass_pre} hold, we have
    \begin{equation}
        \label{ineq:auxiliary_sequence_closeness}
        \begin{split}
            \left\|\grad f(\vx_1^t) - \vs_t\right\| \le O(\eta /\sqrt{t}),
        \end{split}
    \end{equation}
    if the step size is fixed at each iteration.
\end{lemma}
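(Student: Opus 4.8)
The plan is to write $\grad f(\vx_1^t)-\vs_t$ as an average of per-mini-batch gradient differences evaluated at the successive iterates of epoch $t$, bound each difference by $L$-smoothness, and then control the intra-epoch drift $\|\vx_i^t-\vx_1^t\|$ via the update rule together with a lower bound on $\lambda_{\mathrm{min}}(\mG_{j,t})$ that grows linearly in $t$.

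First I would invoke the defining property of random shuffling. Because $\sB_1^t\cup\cdots\cup\sB_m^t=\sI_n$ is a disjoint union into equal blocks of size $n/m$, averaging the mini-batch gradients evaluated at the \emph{common} point $\vx_1^t$ reconstructs the full gradient, $\grad f(\vx_1^t)=\frac1m\sum_{i=1}^m\grad f_{\sB_i^t}(\vx_1^t)$. Subtracting the definition \eqref{eq:consturcted_auxiliary_sequence} of $\vs_t$ gives
\[
    \grad f(\vx_1^t)-\vs_t=\frac1m\sum_{i=1}^m\left(\grad f_{\sB_i^t}(\vx_1^t)-\grad f_{\sB_i^t}(\vx_i^t)\right),
\]
so by the triangle inequality and the $L$-smoothness of each $f_k$ (hence of each batch average $\grad f_{\sB_i^t}$) I get $\left\|\grad f(\vx_1^t)-\vs_t\right\|\le\frac{L}{m}\sum_{i=1}^m\left\|\vx_i^t-\vx_1^t\right\|$. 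It therefore suffices to bound each displacement $\|\vx_i^t-\vx_1^t\|$ by $O(\eta/\sqrt t)$.

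For that, I would telescope \eqref{eq:adagrad_update} with the fixed step size $\eta$, obtaining $\vx_i^t-\vx_1^t=-\eta\sum_{j=1}^{i-1}\mG_{j,t}^{-1/2}\grad f_{\sB_j^t}(\vx_j^t)$, and then use the Gradient Bounded Assumption ($\|\grad f_{\sB_j^t}(\vx_j^t)\|\le G$) and $\|\mG_{j,t}^{-1/2}\|_2=\lambda_{\mathrm{min}}(\mG_{j,t})^{-1/2}$ to conclude $\|\vx_i^t-\vx_1^t\|\le\eta G\sum_{j=1}^{i-1}\lambda_{\mathrm{min}}(\mG_{j,t})^{-1/2}$. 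The decisive step is an eigenvalue lower bound on $\mG_{j,t}$. Writing $\mG_{j,t}=\sum_{\tau=1}^{t-1}\mH_{m,\tau}\mH_{m,\tau}^\top+\mH_{j,t}\mH_{j,t}^\top+\tfrac{\delta_{j,t}}{\Gamma}\mI$ as a sum of positive semidefinite matrices and applying Weyl's inequality $\lambda_{\mathrm{min}}(A+B)\ge\lambda_{\mathrm{min}}(A)+\lambda_{\mathrm{min}}(B)$ term by term yields $\lambda_{\mathrm{min}}(\mG_{j,t})\ge\sum_{\tau=1}^{t-1}\lambda_{\mathrm{min}}(\mH_{m,\tau}\mH_{m,\tau}^\top)$. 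By Assumption~\ref{ass:H_full_column_rank_ass_pre}, each per-epoch outer-product matrix $\mH_{m,\tau}\mH_{m,\tau}^\top$ has full rank with a uniformly bounded condition number, which supplies a constant $\rho>0$ with $\lambda_{\mathrm{min}}(\mH_{m,\tau}\mH_{m,\tau}^\top)\ge\rho$ for all $\tau$ (the Variance Bounded Assumption makes this uniform non-degeneracy plausible, as the within-epoch gradients cannot all collapse); hence $\lambda_{\mathrm{min}}(\mG_{j,t})\ge(t-1)\rho$.

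Chaining the estimates, $\left\|\grad f(\vx_1^t)-\vs_t\right\|\le\frac{L}{m}\sum_{i=1}^m\frac{\eta G(i-1)}{\sqrt{(t-1)\rho}}\le\frac{LG(m-1)}{2\sqrt{\rho}}\cdot\frac{\eta}{\sqrt{t-1}}=O(\eta/\sqrt t)$, treating $L,G,m,\rho$ as constants; the $t=1$ case is folded into the $O(\cdot)$ using $\lambda_{\mathrm{min}}(\mG_{j,1})\ge\delta_{j,1}/\Gamma$, which is bounded below by a positive constant. I expect the main obstacle to be the linear eigenvalue growth $\lambda_{\mathrm{min}}(\mG_{j,t})=\Omega(t)$: the remaining ingredients are the triangle inequality, $L$-smoothness and a telescoping sum, but this is exactly where the new Assumption~\ref{ass:H_full_column_rank_ass_pre} does its work, and the delicate point is to argue that the per-epoch outer-product matrices are \emph{uniformly} well-conditioned — not merely invertible — so that accumulating $t-1$ of them pushes $\lambda_{\mathrm{min}}$ up proportionally to $t$.
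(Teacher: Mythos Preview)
Your overall skeleton matches the paper: write $\grad f(\vx_1^t)-\vs_t$ as an average of per-batch differences, apply $L$-smoothness, telescope the update to bound $\|\vx_i^t-\vx_1^t\|$ by $\eta G(i-1)\lambda_{\mathrm{min}}(\mG_{j,t})^{-1/2}$, and then show $\lambda_{\mathrm{min}}(\mG_{j,t})=\Omega(t)$. The first three steps are fine and essentially identical to what the paper does.

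The gap is in the last step. You argue via Weyl that $\lambda_{\mathrm{min}}(\mG_{j,t})\ge\sum_{\tau=1}^{t-1}\lambda_{\mathrm{min}}(\mH_{m,\tau}\mH_{m,\tau}^\top)$ and then invoke Assumption~\ref{ass:H_full_column_rank_ass_pre} to get $\lambda_{\mathrm{min}}(\mH_{m,\tau}\mH_{m,\tau}^\top)\ge\rho>0$. But $\mH_{m,\tau}\in\sR^{d\times m}$ with $d\gg m$, so $\mH_{m,\tau}\mH_{m,\tau}^\top\in\sR^{d\times d}$ has rank at most $m<d$ and hence $\lambda_{\mathrm{min}}(\mH_{m,\tau}\mH_{m,\tau}^\top)=0$ for every $\tau$. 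The bounded-condition-number assumption is on $\mH_{m,\tau}^\top\mH_{m,\tau}\in\sR^{m\times m}$, not on the $d\times d$ outer product; it says nothing about the $(d-m)$-dimensional null space. Your Weyl sum therefore gives zero, and the argument collapses.

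The paper obtains the linear growth from the scalar perturbation, not from the outer-product blocks. It uses $\mG_{1,t}\succeq\mG_{m,t-1}\succeq\frac{\delta_{m,t-1}}{\Gamma}\mI$, and then shows that the adaptive choice $\delta_{j,i}=\sum_{p<i}\sum_q\|\grad f_{\sB_q^p}(\vx_q^p)\|^2+\cdots$ grows linearly in $t$: by the variance lower bound together with the rejection-sampling step of the algorithm (Lemma~\ref{lem:G_trace_lower_bound} in the appendix), each epoch contributes at least $c_\sigma^2 m^2/(16n)$ to $\delta$, so $\lambda_{\mathrm{min}}(\mG_{1,t})\ge\delta_{m,t-1}/\Gamma\ge (t-1)c_\sigma^2 m^2/(16n\Gamma)$. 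That is where the $\Omega(t)$ comes from; Assumption~\ref{ass:H_full_column_rank_ass_pre} is not actually used in this particular lemma. Replacing your Weyl step by this $\delta$-based bound fixes the proof.
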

This lemma illustrates that $\vs_t$ is close to $\grad f(\vx_1^t)$ when the fixed step size $\eta$ is small enough.
With triangle inequality, it also denotes that the full gradient norm $\|\grad f(\vx_1^t)\|$ can be bounded by $\|\vs_t\|$.
\begin{lemma}
    \label{lem:G_denominator_effect}
    Suppose Assumption~\ref{ass:comm_ass_pre} and Assumption~\ref{ass:H_full_column_rank_ass_pre} hold, in \radagrad, if $\delta_{m,t}\le t mG^2$, $0\le \delta_{m,t}-\delta_{m,t-1}\le m\lambda_{\mathrm{max}}\left(\mH_{m,t}\mH^\top_{m,t}\right)$ and $\Gamma \ge m$, we have
    \begin{equation}
        \label{eq:G_denominator_effect}
        \begin{split}
            \sqrt{\frac{\delta_{m,t}-\delta_{m,t-1}}{t}}\left\|\vs_t\right\| \le O\left(\vs_t^\top\mG^{-\frac{1}{2}}_{m,t}\vs_t\right).
        \end{split}
    \end{equation}
\end{lemma}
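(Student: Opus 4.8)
The plan is to bracket the right-hand side of \eqref{eq:G_denominator_effect} from below by a constant multiple of $\|\vs_t\|^2/\sqrt{t}$, to bracket the left-hand side from above by a constant multiple of the same quantity, and then to combine the two. The structural fact I would lean on is the identity $\vs_t=\frac1m\mH_{m,t}\vone$, with $\vone\in\sR^m$ the all-ones vector: it places $\vs_t$ in the column space of $\mH_{m,t}$ and, crucially, forces $\|\vs_t\|$ to be at least $\sigma_{\mathrm{min}}(\mH_{m,t})/\sqrt m$, which under a bounded-condition-number assumption is comparable to $\sigma_{\mathrm{max}}(\mH_{m,t})$ — the quantity the hypotheses tie to the increment $\delta_{m,t}-\delta_{m,t-1}$.

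First I would bound $\lambda_{\mathrm{max}}(\mG_{m,t})$ from above. Every column of $\mH_{m,\tau}$ is an average of stochastic gradients, so the Gradient Bounded Assumption in Assumption~\ref{ass:comm_ass_pre} gives $\|\mH_{m,\tau}\|_F^2\le mG^2$ and hence $\lambda_{\mathrm{max}}(\mH_{m,\tau}\mH_{m,\tau}^\top)\le mG^2$; summing the $t$ rank-one blocks in $\mG_{m,t}=\sum_{\tau=1}^t\mH_{m,\tau}\mH_{m,\tau}^\top+\frac{\delta_{m,t}}{\Gamma}\mI$ and invoking $\delta_{m,t}\le tmG^2$ and $\Gamma\ge m$ gives $\lambda_{\mathrm{max}}(\mG_{m,t})\le 2tmG^2$. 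Since $\mG^{-\frac12}_{m,t}\succeq\lambda_{\mathrm{max}}(\mG_{m,t})^{-1/2}\mI$, this yields $\vs_t^\top\mG^{-\frac12}_{m,t}\vs_t\ge\|\vs_t\|^2/(G\sqrt{2tm})$. In parallel I would lower-bound $\|\vs_t\|$ using Assumption~\ref{ass:H_full_column_rank_ass_pre}: full column rank makes the Gram matrix $\mH_{m,t}^\top\mH_{m,t}$ positive definite, and its condition number is bounded by some constant $\kappa$, so
\[
\|\vs_t\|^2=\tfrac1{m^2}\vone^\top\mH_{m,t}^\top\mH_{m,t}\vone\ge\tfrac1{m}\lambda_{\mathrm{min}}(\mH_{m,t}^\top\mH_{m,t})\ge\tfrac1{m\kappa}\lambda_{\mathrm{max}}(\mH_{m,t}\mH_{m,t}^\top),
\]
using $\|\vone\|^2=m$ and that $\mH_{m,t}^\top\mH_{m,t}$ and $\mH_{m,t}\mH_{m,t}^\top$ share their nonzero eigenvalues.

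Finally, combining the hypothesis $\delta_{m,t}-\delta_{m,t-1}\le m\lambda_{\mathrm{max}}(\mH_{m,t}\mH_{m,t}^\top)$ with the last display gives $\delta_{m,t}-\delta_{m,t-1}\le m^2\kappa\|\vs_t\|^2$, so $\sqrt{(\delta_{m,t}-\delta_{m,t-1})/t}\,\|\vs_t\|\le\frac{m\sqrt\kappa}{\sqrt t}\|\vs_t\|^2$; and the first estimate rewrites $\frac{m\sqrt\kappa}{\sqrt t}\|\vs_t\|^2\le \sqrt2\,G\,m^{3/2}\sqrt\kappa\cdot\vs_t^\top\mG^{-\frac12}_{m,t}\vs_t$, which is \eqref{eq:G_denominator_effect} with an explicit $O(\cdot)$ constant.

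The step I expect to be the genuine obstacle is not any one computation but realizing that the lower bound on $\|\vs_t\|$ is indispensable: the crude eigenvalue estimate produces $\|\vs_t\|^2$, one power of $\|\vs_t\|$ more than the target, and the only way to shed it is to rule out $\|\vs_t\|$ being small relative to $\sqrt{\delta_{m,t}-\delta_{m,t-1}}$. That is precisely the coupling the identity $\vs_t=\frac1m\mH_{m,t}\vone$ and the bounded-condition-number assumption provide. A minor point to watch is the exact phrasing of Assumption~\ref{ass:H_full_column_rank_ass_pre} (condition number of $\mH_{m,t}$ itself versus of its Gram matrix), which only affects the constant, not the argument.
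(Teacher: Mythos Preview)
Your argument is correct and in fact somewhat cleaner than the paper's. Both proofs rest on the same two structural facts: $m\vs_t=\mH_{m,t}\vone$ places $\vs_t$ in the column space of $\mH_{m,t}$, and the bounded condition number (Assumption~\ref{ass:H_full_column_rank_ass_pre}) then ties $\|\vs_t\|$ to $\lambda_{\mathrm{max}}(\mH_{m,t}\mH_{m,t}^\top)$, hence to the increment $\delta_{m,t}-\delta_{m,t-1}$. Where you bound $\lambda_{\mathrm{max}}(\mG_{m,t})\le 2tmG^2$ directly and use $\mG_{m,t}^{-1/2}\succeq\lambda_{\mathrm{max}}(\mG_{m,t})^{-1/2}\mI$ in one shot, the paper proceeds in two stages (Lemmas~\ref{lem:gradient_norm_upper_bound_component1} and~\ref{lem:gradient_norm_upper_bound_component2} in the appendix): it first dominates $\mG_{m,t}^{-1/2}$ from below by $\beta_t(\mG_{m,t}-\mG_{m,t-1})^{-1/2}$ for a suitable scalar $\beta_t$, and then evaluates the resulting quadratic form $\vs_t^\top(\mG_{m,t}-\mG_{m,t-1})^{-1/2}\vs_t$ via an explicit SVD of $\mH_{m,t}$. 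A side effect of the paper's first stage is that it bounds $\lambda_{\mathrm{max}}(\mG_{m,t}^{1/2})$ through the trace $\mathrm{tr}(\hat{\mH}_{m,t}^{1/2})$, which costs an extra $\sqrt{d}$ factor that your direct spectral-norm estimate avoids. Your route is therefore more elementary and yields a tighter constant; the paper's two-lemma decomposition isolates the role of the current epoch's gradient matrix more explicitly, which may be its motivation, but is not needed for the statement as written.
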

This lemma reveals the connection between $\|\vs_t\|$ and the sufficient descent about $\vs_\tau$.
According to the special structure of $\mG_{m,t}$, we are able to provide $\left\|\vs_t\right\|$ as the lower bound of the quadratic form $\vs_t^\top\mG_{m,t}^{-\frac{1}{2}}\vs_t$.
Combining Lemma~\ref{lem:auxiliary_sequence_closeness} with Lemma~\ref{lem:G_denominator_effect}, $\vs_t^\top\mG_{m,t}^{-\frac{1}{2}}\vs_t$ can be even lower bounded:
\begin{equation}
    \label{ineq:s_sufficient_descent_lb}
    \begin{aligned}[b]
        \sqrt{\frac{\delta_{m,t}- \delta_{m,t-1}}{t}}\left\|\grad f(\vx_1^t)\right\|= O\left(\vs_t^\top\mG_{m,t}^{-\frac{1}{2}}\vs_t\right) + O\left(\frac{\eta}{t}\right)
    \end{aligned}
\end{equation}
by using RPGS as a bridge.

On the other hand, if we investigate the sufficient descent about $\grad f(\vx_1^t)$ directly, we obtain a lower bound of the sufficient descent as
\begin{equation}
    \label{ineq:grad_sufficient_descent_lb}
    \begin{aligned}[b]
         \frac{1}{\sqrt{t}}\left\|\grad f(\vx_1^t)\right\|^2 \le O(\grad f^T(\vx_1^t)\mG_{m,t}^{-\frac{1}{2}}\grad f(\vx_1^t)),
    \end{aligned}
\end{equation}
due to the definition of $\mG_{m,t}$ and Gradient Bounded Assumption in Assumption~\ref{ass:comm_ass_pre}.
When the parameter $\vx_1^t$ is close to an FSP, $\|\grad f(\vx_1^t)\|$ is close to $0$ due to L-Smooth Assumption in Assumption~\ref{ass:comm_ass_pre}.
With a lower order of $\|\grad f(\vx_1^t)\|$, LHS of eq.~\ref{ineq:s_sufficient_descent_lb} is a undoubtedly better lower bound compared with that in eq.~\ref{ineq:grad_sufficient_descent_lb}, when $\delta_{m,t}-\delta_{m-1,t}$ has a constant lower bound, and the upper bound of RHS in eq.~\ref{ineq:s_sufficient_descent_lb} is almost the same as that in eq.~\ref{ineq:grad_sufficient_descent_lb}.

\subsection{Tight Lower Bounds Weaken Sufficient Conditions for the Convergence}
In this section, we answer the second question proposed in section~\ref{sec:rgps_ci}. 
First, we introduce the relation between the sufficient descent lower bound and the convergence rate.
Then, combining with section~\ref{sec:RGPS_sufficient_descent}, we provide a explanation about the convergence rate improvement in \radagrad.

\textbf{The relation between the sufficient descent lower bound and the convergence rate.} 
If we analyze the convergence through investigating the lower bound of the sufficient descent about $\grad f(\vx_1^t)$ in \radagrad, we have following inequalities:
\begin{equation}
    \label{ineq:convergence_bottleneck_3}
    \begin{split}
        m\eta\sum_{t=1}^T\frac{C}{\sqrt{t}}\left\|\grad f(\vx_1^t)\right\|^2 \mathop{\le}^{\circled{1}} m\eta\sum\limits_{t=1}^T\grad f^\top(\vx_1^t)\mG^{-\frac{1}{2}}_{m,t}\grad f(\vx_1^t) \mathop{\le}^{\circled{2}} f(\vx_1^1) - f^* + \Theta(T\eta^a),
    \end{split}
\end{equation}
where $\circled{1}$ follows from eq.~\ref{ineq:grad_sufficient_descent_lb} and $\circled{2}$ can be obtained through providing the telescoping sum of eq.~\ref{ineq:convergence_bottleneck_1} and scaling the terms with positive upper bounds to $\Theta(\eta^a)$ ($a\ge 0$).
If a random variable $\tau$ follows $\mathbb{P}[\tau = i] = \frac{i^{-0.5}}{\sum_{i=1}^T i^{-0.5}}$, we obtain 
\begin{small}
\begin{equation}
    \label{ineq:convergence_bottleneck_4}
    \begin{split}
        \mathbb{E}_{\tau} \left[\left\|\grad f(\vx_1^\tau)\right\|^2\right] \le \frac{f(\vx_1^1)-f^*}{\eta m \sqrt{T}}+\frac{\Theta(\sqrt{T}\eta^{a-1})}{m}
        \xRightarrow{\frac{1}{\eta\sqrt{T}}=\sqrt{T}\eta^{a-1}}
        \mathbb{E}_{\tau} \left[\left\|\grad f(\vx_1^\tau)\right\|^2\right] \le \Theta\left(T^{\frac{1}{a}-\frac{1}{2}}\right).
    \end{split}
\end{equation}
\end{small}
As a result, the sufficient condition for achieving \emph{FSP}, $\mathbb{E}_{\tau} [\|\grad f(\vx_1^\tau)\|^2] \le \epsilon^2$, is $RHS \le \epsilon^2$ for eq.~\ref{ineq:convergence_bottleneck_4}. 
The convergence rate of \radagrad is at least $O(T^{-1/4})$ if we lower bound the sufficient descent about $\grad f(\vx_1^\tau)$ like previous work.
As a result, we can conclude that the order of $\|\grad f(\vx_1^\tau)\|$ in the lower bound of sufficient descent directly decide the order of $\epsilon$ in RHS of the sufficient condition for convergence.
The order of $\|\grad f(\vx_1^\tau)\|$ higher, the convergence rate worse.

\textbf{The convergence rate improvement in \radagrad.} eq.~\ref{ineq:s_sufficient_descent_lb} in section~\ref{sec:RGPS_sufficient_descent} shows that the order of  $\|\grad f(\vx_1^t)\|$ in the lower bound of the sufficient descent about $\vs_t$ is significantly smaller than that in eq.~\ref{ineq:grad_sufficient_descent_lb}.
Hence, similar to eq.~\ref{ineq:convergence_bottleneck_3}, we can approximately provide
\begin{small}
\begin{equation}
    \label{ineq:convergence_bottleneck_5}
    \begin{aligned}[b]
         &\sum_{t=1}^T\frac{\eta}{\sqrt{t}}\left\|\grad f(\vx_1^t)\right\| \mathop{\le}^{\circled{1}} \eta \sum_{t=1}^T\vs_t^\top\mG_{m,t}^{-\frac{1}{2}}\vs_t + \Theta\left(\eta^2 \ln T\right) \mathop{\le}^{\circled{2}} f(\vx_1^1)-f^*+ \underbrace{\Theta\left(\eta^\alpha \sum_{t=1}^T t^{-\beta}\right)}_{T_1}  + \underbrace{\Theta\left(\eta^2 \ln T\right)}_{T_2},
    \end{aligned}
\end{equation}
\end{small}
where $\circled{1}$ follows from eq.~\ref{ineq:s_sufficient_descent_lb}, $\circled{2}$ can be obtained by techniques similar to the inequality $\circled{2}$ in eq.~\ref{ineq:convergence_bottleneck_3}.
The constants satisfy $\alpha\ge 0$, $\beta \ge 1$.
Notice that $T_1$ in eq.~\ref{ineq:convergence_bottleneck_5} is corresponding to $\Theta\left(T\eta^\alpha\right)$ in eq.~\ref{ineq:convergence_bottleneck_3}, and $T_2$ in eq.~\ref{ineq:convergence_bottleneck_5} is from the gap between $\vs_t$ and $\grad f(\vx_1^t)$. Similar to eq.~\ref{ineq:convergence_bottleneck_4}, we obtain
\begin{equation}
    \label{ineq:convergence_bottleneck_6}
    \begin{split}
        \mathbb{E}_{\tau}\left[\left\|\grad f(\vx_1^\tau)\right\|\right] \le \frac{f(\vx_1^1)-f^*}{\eta\sqrt{T}}+ \frac{\Theta(\eta^\alpha)}{\sqrt{T}} + \Theta\left(\frac{\eta\ln T}{\sqrt{T}}\right).
    \end{split}
\end{equation}
As a result, the sufficient condition for the parameters achieving \emph{FSP}, $\mathbb{E}_{\tau}\left[\left\|\grad f(\vx_1^\tau)\right\|\right]\le \epsilon$, is $RHS \le \epsilon$ for eq.~\ref{ineq:convergence_bottleneck_6}. 
That is to say, the convergence rate of \radagrad is near $\tilde{O}(T^{-1/2})$ which is better than previous best-known results.

\section{\radagrad achieves an \texorpdfstring{$\tilde{O}(T^{-1/2})$}{TEXT} Convergence Rate}
In this section, we show the convergence rate of adaptive gradient methods for achieving first-order stationary points (FSPs) in non-convex optimization can be $\tilde{O}(T^{-1/2})$. 
Note that our theoretical results are based on \radagrad, a variant of AdaGrad with full matrices (AdaGrad\_F), and is just proposed for analytic convenience.
Besides, we compare the total complexity between \radagrad and random shuffling SGD to illustrate that adaptive gradient methods can be faster than SGD after finite epochs, theoretically.
\begin{algorithm}[h]
    \SetAlgoLined
    \textbf{Input:} The step size $\eta>0$, the iteration number in one epoch $m$, the number of instances $n$\;
    \textbf{Variables:} $\mH_{i,t}\in \mathbb{R}^{d\times d}$, $\vg_i^t \in \mathbb{R}^{d\times d}$ $\delta_c = 0$\;
    \textbf{Initialization:} $\vx^0_{m+1}$, $\sigma_p = 0$\;
    \For{$t\gets1$ \KwTo $T$}{
        Initialize $\vx_1^t = \vx_{m+1}^{t-1}$, $\sigma_p = 0$\;
        \While{$\sigma_p < \frac{c_\sigma^2m^2}{8n}$}{
            Random shuffle the instances and get a partition $\left\{\sB_1^t, \sB_2^t, \ldots, \sB_m^t\right\}$\;
            Calculate $\sigma_p = \sum_{j=1}^m \left\|\grad f_{\sB_j^t}\left(\vx_1^t\right)\right\|^2$\;
        }    
        \For{$i\gets1$ \KwTo $m$}{
            Receive the mini-batch stochastic gradient $\vg_{i,t} = \grad f_{\sB^t_i}(\vx^t_i)$\;
            $\delta_c = \delta_c + \left\|\vg_{i,t}\right\|^2$\;
            Let $\mH_{i,t} = \left[\begin{matrix}\vg_{1,1} & \vg_{2,1} & \ldots & \vg_{i,t}
            \end{matrix}\right]$\;
            $\vx^t_{i+1} = \vx^t_i - \eta  \cdot \left(\mH_{i,t}\mH_{i,t}^\top+ \frac{\delta_c}{\Gamma} I\right)^{-\frac{1}{2}}\vg^t_i$\;
        }
    }
\caption{\radagrad with full matrices \label{alg:radagrad_full_matrices}}
\end{algorithm}

\textbf{\radagrad, a modified AdaGrad for theoretically analytic convenience.}
We list the main differences between \radagrad and AdaGrad as follows.
First, \radagrad requires a lower bound for the sum of mini-batch gradient norms, and obtains such lower bound with the sampling strategy (Step.~6 to Step.~8).
Second, AdaGrad\_F only considers the perturbation $\frac{\delta_c}{\Gamma}$ as a constant, while \radagrad has an adaptive perturbation which is related to the $l_2$ norm of mini-batch gradients (Step.~12).
On the other hand, Algorithm~\ref{alg:radagrad_full_matrices} which almost have a same update paradigm (Step.~14) as AdaGrad\_F.
Hence, it preserve benefits from second moments of adaptive gradient methods

In the following, we provide our additional mild assumptions, the convergence results and the total complexity of \radagrad.
Due to space limitations, the details of proof arguments are provided in the supplementary materials.
\begin{assumption}
    \label{ass:H_full_column_rank_ass_pre}
    We assume $d\gg m$, $\mH_{m,t}$ has full column rank and bounded condition number formulated as
    \begin{equation}
        \lambda_{\mathrm{max}}\left(\mH^\top_{m,t}\mH_{m,t}\right)/ \lambda_{\mathrm{min}}\left(\mH^\top_{m,t}\mH_{m,t}\right)\le c_{\kappa},
    \end{equation}
    where $d$ denotes the dimension of the parameters, and $m$ is the number of iterations in each epoch.
\end{assumption}
In Assumption~\ref{ass:H_full_column_rank_ass_pre}, $d\gg m$ follows the over-parameterized property in most neural network training.
Besides, we validate bounded condition numbers with experiments in section~\ref{sec:experiments}.
\begin{theorem}
    \label{thm:radagrad_convergence_rate}
    Under Assumption~\ref{ass:comm_ass_pre} and Assumption~\ref{ass:H_full_column_rank_ass_pre}, if $\eta\le \frac{c_\sigma^2}{16nLG}$, $\Gamma \ge m$ and the hyper-parameter $\delta_{j,i}$ satisfy
    \begin{equation}
        \label{eq:final_step_size_selection_ac}
        \begin{aligned}[b]
            \quad \delta_{j,i} = \sum_{p=1}^{i-1}\sum_{q=1}^m \left\|\grad f_{\sB_q^p}(\vx_q^p)\right\|^2 + \sum_{q=1}^j \left\|\grad f_{\sB_q^i}(\vx_q^i)\right\|^2, \forall\ i\in\sI_T, j\in\sI_m,
        \end{aligned}
    \end{equation}
    we have
    \begin{equation}
        \label{ineq:gradient_norm_convergence_4}
        \begin{split}
            \mathbb{E}_t\left[\left\|\grad f(\vx_1^t)\right\|\right] \le \frac{C_0}{\eta \sqrt{T}} + \frac{C_1}{\sqrt{T}} + \frac{C_2\eta}{\sqrt{T}} + \frac{C_3\eta^2}{\sqrt{T}} + \frac{C_4\ln(T)}{\sqrt{T}} + \frac{C_5\eta\ln(T)}{\sqrt{T}},
        \end{split}
    \end{equation}
    where $C_0, C_1, \ldots, C_5$  are constants which are independent with $T$ and defined in the proof.
\end{theorem}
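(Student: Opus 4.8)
The plan is to run the $L$-smooth descent estimate one full epoch at a time and telescope over $t=1,\dots,T$. Applying the $L$-smoothness part of Assumption~\ref{ass:comm_ass_pre} between $\vx_1^t$ and $\vx_{m+1}^t$ and substituting the update~\eqref{eq:adagrad_update} — essentially the computation in~\eqref{ineq:convergence_bottleneck_1}, but organised around the RGPS vector $\vs_t$ of~\eqref{eq:consturcted_auxiliary_sequence} instead of around $\grad f_{\sB_i^t}(\vx_1^t)$ — each epoch contributes a negative ``sufficient descent'' term proportional to $\vs_t^\top\mG_{m,t}^{-1/2}\vs_t$, plus two non-negative error terms: a drift term $\tfrac{L}{2}\|\vx_{m+1}^t-\vx_1^t\|^2$ and a mismatch term coming from replacing the per-iteration $\mG_{i,t},\vx_i^t$ by $\mG_{m,t},\vx_1^t$ inside the inner product. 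The bound~\eqref{ineq:gradient_norm_convergence_4} will fall out of the telescoped inequality after a randomized-index argument.

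First I would pin down the structural facts that drive the sum. \emph{(i) A constant lower bound on the perturbation increment.} With the choice of $\delta_{j,i}$ in~\eqref{eq:final_step_size_selection_ac} one has $\delta_{m,t}-\delta_{m,t-1}=\sum_{q=1}^m\|\grad f_{\sB_q^t}(\vx_q^t)\|^2=\Tr(\mH_{m,t}\mH_{m,t}^\top)$; the resampling loop in Steps~6--8 of Algorithm~\ref{alg:radagrad_full_matrices} forces $\sum_{q=1}^m\|\grad f_{\sB_q^t}(\vx_1^t)\|^2\ge c_\sigma^2 m^2/(8n)$, and since $\delta_{i,t}\ge\|\grad f_{\sB_i^t}(\vx_i^t)\|^2$ makes each step self-normalizing ($\|\mG_{i,t}^{-1/2}\grad f_{\sB_i^t}(\vx_i^t)\|\le\sqrt{\Gamma}$, hence $\|\vx_i^t-\vx_1^t\|=O(\eta)$), the $L$-smoothness property together with $\eta\le c_\sigma^2/(16nLG)$ transfers this bound from $\vx_1^t$ to the iterates $\vx_q^t$, giving $\delta_{m,t}-\delta_{m,t-1}\ge c>0$; I would also check the mild hypotheses $\delta_{m,t}\le tmG^2$, $\delta_{m,t}-\delta_{m,t-1}\le m\lambda_{\mathrm{max}}(\mH_{m,t}\mH_{m,t}^\top)$ (immediate from $\Tr\le m\lambda_{\mathrm{max}}$) and $\Gamma\ge m$ required by Lemma~\ref{lem:G_denominator_effect}. \emph{(ii) The RGPS coupling.} Chaining Lemma~\ref{lem:auxiliary_sequence_closeness} with Lemma~\ref{lem:G_denominator_effect} and using (i) gives, as in~\eqref{ineq:s_sufficient_descent_lb}, the key per-epoch estimate $\tfrac{C}{\sqrt t}\|\grad f(\vx_1^t)\|\le O(\vs_t^\top\mG_{m,t}^{-1/2}\vs_t)+O(\eta/t)$.

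\emph{(iii) Spectral control and the error terms.} Because $\mG_{i,t}$ accumulates $\sum_{\tau<t}\mH_{m,\tau}\mH_{m,\tau}^\top$ and $\tfrac{\delta_{i,t}}{\Gamma}\mI$ with $\delta_{i,t}=\Theta(t)$ by (i), we get $\lambda_{\mathrm{min}}(\mG_{i,t})=\Omega(t)$ and $\lambda_{\mathrm{max}}(\mG_{i,t})=O(t)$, so $\|\mG_{i,t}^{-1/2}\|=O(1/\sqrt t)$ uniformly in $i$; with the Gradient Bounded Assumption this makes the drift term $O(\eta^2/t)$, and a matrix-perturbation bound for $\|\mG_{m,t}^{-1/2}-\mG_{i,t}^{-1/2}\|$ together with $\|\vx_i^t-\vx_1^t\|=O(\eta/\sqrt t)$ makes the mismatch term $O(\eta^\alpha t^{-\beta})$ with $\beta\ge1$. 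Summing the per-epoch inequalities and feeding in (i)--(iii) reproduces the estimate behind~\eqref{ineq:convergence_bottleneck_5},
\begin{equation*}
\sum_{t=1}^T\frac{\eta}{\sqrt t}\,\|\grad f(\vx_1^t)\|\;\le\;\Delta+\Theta\Bigl(\eta^\alpha\sum_{t=1}^T t^{-\beta}\Bigr)+\Theta\bigl(\eta^2\ln T\bigr),
\end{equation*}
and since $\beta\ge1$ the first error sum is $O(\ln T)$. Introducing the index $\tau$ with $\mathbb{P}[\tau=t]=t^{-1/2}/\sum_{i=1}^T i^{-1/2}$ and using $\sum_{i=1}^T i^{-1/2}=\Theta(\sqrt T)$ turns the left-hand side into $\Theta(\sqrt T)\cdot\mathbb{E}_\tau[\|\grad f(\vx_1^\tau)\|]$; dividing through yields~\eqref{ineq:gradient_norm_convergence_4}, with $C_0$ coming from $\Delta$, $C_4,C_5$ from the two logarithmic error terms, and $C_1,C_2,C_3$ from the $\eta$-polynomial parts of the $O(\cdot)$ constants in (i)--(iii) and the smoothness/variance parameters.

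The main obstacle is step (iii), and within it the mismatch term $\eta\,\grad f^\top(\vx_1^t)\bigl[\sum_i\mG_{m,t}^{-1/2}\grad f_{\sB_i^t}(\vx_1^t)-\sum_i\mG_{i,t}^{-1/2}\grad f_{\sB_i^t}(\vx_i^t)\bigr]$: one must control \emph{simultaneously} the operator-norm gap $\|\mG_{m,t}^{-1/2}-\mG_{i,t}^{-1/2}\|$ — a rank-$\le m$ plus scalar perturbation of a matrix whose spectrum scales with $t$, where the bounded condition number of Assumption~\ref{ass:H_full_column_rank_ass_pre} enters — and the intra-epoch drift $\|\vx_i^t-\vx_1^t\|$, and prove their combined contribution decays at least like $1/t$ so the telescoped error is $O(\ln T)$ rather than $O(T)$; this decay is precisely what buys the improvement over the $\Theta(\|\grad f(\vx_1^t)\|^2)$ descent of prior work. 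A secondary technical point is showing the resampling while-loop in Algorithm~\ref{alg:radagrad_full_matrices} terminates almost surely, for which the (atypical) \emph{lower} bound $\mathbb{E}_i[\|\grad f_i(\vx)-\grad f(\vx)\|^2]\ge c_\sigma^2$ in Assumption~\ref{ass:comm_ass_pre} is the essential ingredient.
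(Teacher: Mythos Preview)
Your proposal is correct and follows essentially the same route as the paper: a per-epoch $L$-smooth descent organised around $\vs_t$ to produce the sufficient-descent quadratic $\vs_t^\top\mG_{m,t}^{-1/2}\vs_t$, the RGPS coupling of Lemmas~\ref{lem:auxiliary_sequence_closeness}--\ref{lem:G_denominator_effect} together with the $\delta_{m,t}-\delta_{m,t-1}\ge c$ lower bound to turn that into $\tfrac{1}{\sqrt t}\|\grad f(\vx_1^t)\|$, error terms controlled via $\lambda_{\mathrm{min}}(\mG_{1,t})=\Omega(t)$, telescoping, and the randomized index $\tau$. The only tactical differences are that the paper bounds the matrix mismatch not by an operator-norm perturbation estimate but by a Young/Cauchy--Schwarz split (inserting $\mG_{m,t}^{\pm 1/4}$) together with the elementary bound $\mG_{m,t}^{1/2}-\mG_{i,t}^{1/2}\preceq\sqrt{2m}G\,\mI$, and bounds the step length via Sherman--Morrison rather than your self-normalization argument; also, Assumption~\ref{ass:H_full_column_rank_ass_pre} enters only inside Lemma~\ref{lem:G_denominator_effect} (through the SVD of $\mH_{m,t}$), not in the mismatch term.
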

Assuming that $L$, $G$ and $c_\sigma$ are known. 
Then, we can choose the following learning rate to obtain a concrete bound.
\begin{corollary}
    \label{cor:radagrad_complexity}
    Let $\left\{\vx_i^t\right\}$ be the sequence generated by Algorithm.~\ref{alg:radagrad_full_matrices} and $\vx_{out}$ be its output. 
    For given tolerance $\epsilon > 0$, under the same conditions as Theorem~\ref{thm:radagrad_convergence_rate}, if we choose $\eta = \frac{c_{\sigma}^2}{16nLG}$, $\Gamma=m$ and $m = n$, then to guarantee
    \begin{equation}
        \label{ineq:convergence_condition}
        \mathbb{E}_\tau\left[\left\|\grad f(\vx_1^\tau)\right\|\right] = \frac{\sum_{i=1}^T \frac{1}{\sqrt{i}}\left\|\sum_{j=1}^m \frac{1}{m} \grad f_{\sB_j^i}(\vx_1^i)\right\|}{\sum_{i=1}^T\frac{1}{\sqrt{i}}} \le \epsilon,
    \end{equation}
    it requires nearly $T = \lfloor 36C_{\mathrm{max}}n^3 d \epsilon^{-2}\rfloor$ outer iterations, where $C_{\mathrm{max}}$ is constant independent with $T$, $n$, $d$ and defined in the proof.
    In expectation, the total number of gradient evaluation is nearly
    \begin{equation*}
        \mathcal{T} = \Big\lfloor 36\left[1-\exp\left(-\frac{c_{\sigma}^{4}}{32G^4}\right)\right]^{-1}C_{\mathrm{max}}n^4d\epsilon^{-2} \Big\rfloor.
    \end{equation*}
\end{corollary}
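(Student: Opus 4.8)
The plan is to substitute the parameter choices $\eta=\frac{c_\sigma^2}{16nLG}$, $\Gamma=m$ and $m=n$ into the bound \eqref{ineq:gradient_norm_convergence_4} of Theorem~\ref{thm:radagrad_convergence_rate} and then invert it in $T$. First I would reconcile the two sides: since each epoch freezes $\vx_1^t$ and $\{\sB_j^t\}_{j=1}^m$ is a partition of $\sI_n$ into equal blocks, $\frac1m\sum_{j=1}^m\grad f_{\sB_j^t}(\vx_1^t)=\frac1n\sum_{k=1}^n\grad f_k(\vx_1^t)=\grad f(\vx_1^t)$, so the weighted average displayed in \eqref{ineq:convergence_condition} is exactly $\mathbb{E}_\tau[\|\grad f(\vx_1^\tau)\|]$ with $\mathbb{P}[\tau=i]\propto i^{-1/2}$ --- the quantity already controlled by Theorem~\ref{thm:radagrad_convergence_rate}. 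It therefore suffices to pick $T$ so that the right-hand side of \eqref{ineq:gradient_norm_convergence_4} is at most $\epsilon$.

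Next I would track how the constants $C_0,\dots,C_5$ produced in the proof of Theorem~\ref{thm:radagrad_convergence_rate} scale with $n$ and $d$ and substitute $\eta=\Theta(1/n)$. Every term of \eqref{ineq:gradient_norm_convergence_4} then takes the form $(\text{const})\cdot n^{a}d^{b}(\ln T)^{c}/\sqrt T$; the dominant one is $C_0/(\eta\sqrt T)=16nLGC_0/(c_\sigma^2\sqrt T)$, and collapsing the worst $(n,d)$-scaling across all six terms into a single $C_{\max}$ depending only on $L,G,c_\sigma$ yields $\mathbb{E}_\tau[\|\grad f(\vx_1^\tau)\|]\le\sqrt{C_{\max}n^3d}/\sqrt T$ up to a logarithmic factor. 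Demanding this to be $\le\epsilon$ and squaring gives $T\gtrsim C_{\max}n^3d\,\epsilon^{-2}$; the $\ln T$ factors and the numerical slack are handled by the standard observation that $\ln T/\sqrt T$ is eventually dominated by any fixed polynomial margin, so the self-referential inequality $\sqrt T\ge (A+B\ln T)/\epsilon$ holds once $T\ge 36C_{\max}n^3d\,\epsilon^{-2}$, which is the claimed outer-iteration count (this is the sense of ``nearly'').

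For the total number of gradient evaluations I would cost out one epoch. The inner update loop receives $m$ mini-batch gradients, each an average over $n/m$ component gradients, i.e.\ $n$ component-gradient evaluations when $m=n$. The rejection loop (Steps~6--8) spends $n$ component-gradient evaluations per trial to form $\sigma_p$, and I would argue that its number of trials is stochastically dominated by a geometric variable with success probability $p_0\ge 1-\exp(-c_\sigma^4/(32G^4))$: the Variance-Bounded Assumption lower-bounds $\mathbb{E}_\sigma[\sigma_p]$ by a constant multiple of the acceptance threshold $c_\sigma^2m^2/(8n)$, while the Gradient-Bounded Assumption bounds the second moment of $\sigma_p$, so a reverse-Markov / Paley--Zygmund inequality gives $\mathbb{P}[\sigma_p\ge c_\sigma^2m^2/(8n)]\ge p_0$. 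Hence the expected cost of an epoch is at most $n+n/p_0=O(n/p_0)$, and multiplying by $T=\lfloor 36C_{\max}n^3d\epsilon^{-2}\rfloor$ produces $\mathcal{T}=\lfloor 36[1-\exp(-c_\sigma^4/(32G^4))]^{-1}C_{\max}n^4d\epsilon^{-2}\rfloor$.

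The step I expect to be the main obstacle is the one-sided concentration for the resampling loop: the algorithm only assumes a \emph{lower} bound $c_\sigma^2$ on the per-instance gradient variance together with an \emph{upper} bound $G$ on gradient norms, so turning ``$\mathbb{E}_\sigma[\sigma_p]$ is not too small'' into ``$\sigma_p$ is not too small with constant probability'' genuinely requires an anti-concentration (Paley--Zygmund-type) argument, with $G$ entering only through a second-moment control of $\sigma_p$. A more routine but still delicate point is propagating the explicit $n$- and $d$-dependence of $C_0,\dots,C_5$ from the proof of Theorem~\ref{thm:radagrad_convergence_rate} carefully enough to justify the $n^3d$ (rather than a larger power) appearing in $T$ and to pin down $C_{\max}$.
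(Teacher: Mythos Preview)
Your overall plan --- substitute the parameter choices into \eqref{ineq:gradient_norm_convergence_4}, track the $(n,d)$-scaling of $C_0,\dots,C_5$, absorb everything into a single constant, and invert in $T$ --- is exactly what the paper does. One small correction: after substituting $\eta=\frac{c_\sigma^2}{16nLG}$, $\Gamma=m=n$, the dominant terms in $n$ are the $C_1/\sqrt{T}$ and $C_4\ln T/\sqrt{T}$ contributions (each scaling like $n^{3/2}\sqrt{d}$), not $C_0/(\eta\sqrt T)$ (which scales like $n\sqrt d$). This does not affect your argument, since you correctly propose to collapse via the worst case, but it is worth getting right when you actually pin down $C_{\max}$.

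The one place where your route diverges from the paper is the anti-concentration step for the rejection loop. You propose Paley--Zygmund, using the variance lower bound to control $\mathbb{E}_\sigma[\sigma_p]$ from below and the gradient bound $G$ to control $\mathbb{E}_\sigma[\sigma_p^2]$ from above. That would work, but it does \emph{not} produce the specific success probability $1-\exp(-c_\sigma^4/(32G^4))$ stated in the corollary; you would get a polynomial-in-$(c_\sigma/G)$ constant instead. The paper's argument is actually simpler: since each $\|\nabla f_{\sB_j^t}(\vx_1^t)\|^2\le G^2$, the random variable $\sigma_p$ is supported in $[0,mG^2]$, and applying Hoeffding's inequality (with a single bounded variable) to the deviation $\sigma_p-\mathbb{E}_\sigma[\sigma_p]\le -\tfrac{c_\sigma^2m^2}{8n}$ gives exactly $\exp\bigl(-\tfrac{m^2c_\sigma^4}{32n^2G^4}\bigr)$, which becomes $\exp(-c_\sigma^4/(32G^4))$ at $m=n$. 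So the ``main obstacle'' you flag is handled by Hoeffding rather than Paley--Zygmund, and the stated exponential constant falls out directly. The remaining geometric-trials computation and the multiplication by $T$ then match your proposal.
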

To guarantee eq.~\ref{ineq:convergence_condition}, the total complexity required by random shuffling SGD is $O(C_{\mathrm{sgd}}n\epsilon^{-3})$. 
That is to say, for a rough comparison, if $\epsilon \le O(\frac{C_{\mathrm{sgd}}}{C_{\mathrm{max}}n^3d})$~\cite{haochen2018random,nguyen2020unified}, then Algorithm~\ref{alg:radagrad_full_matrices} seems to have advantages over random shuffling SGD in non-convex settings.
From this point of view, it seems that Algorithm~\ref{alg:radagrad_full_matrices} is inefficient when $n$ and $d$ is large.
However, our analysis focuses on explaining that the introduction of second moments is beneficial for adaptive gradient methods to reduce the dependence on $T$, and our convergence rate may be loose in that it does not take into account a tight dependence on $n$ and $d$ in our complexity results.
\vspace{-8pt}

\section{Experiments}
\label{sec:experiments}
In this section, we conduct comprehensive experiments to validate the additional mild assumption, i.e., Assumption~\ref{ass:H_full_column_rank_ass_pre}, and the acceleration effect from second moments.
 
The paper then proceeds to introduce the experimental settings for the image classification tasks. 
We used the CIFAR-10 dataset, and test a highly simplified CNN model, whose architecture can be found in our supplementary materials.
To compare convergence rates among SGD, AdaGrad, AdaGrad_F, \radagrad and their random shuffling version, e.g., SGD\_s, AdaGrad\_s, etc, we ran 200 epochs, and set the learning rate for different optimizers as theoretical suggested in Table~\ref{tab:twobest}.
\begin{table}[t]
    \caption{\small Hyper-parameters selection of different optimizers.}
        \label{tab:twobest}
        \centering
        \begin{tabular}{ | c | c | c |}
           \hline
           \textbf{Optimizers} & \textbf{Hyper-Parameters} & \textbf{Selection of $\eta$ } \\ \hline
           SGD_u & $\eta_t = \eta \cdot t^{-1/2}$ & $\left\{1.0, 0.1, 0.01\right\}$\\ \hline
           SGD_s & $\eta_t = \eta \cdot t^{-1/3}$ & $\left\{1.0, 0.1, 0.01\right\}$\\ \hline
           AdaGrad_u & $\eta_t = \eta \cdot t^{-1/2}$ & $\left\{0.1, 0.01, 0.001\right\}$\\
           \hline
           AdaGrad_s & $\eta_t = \eta \cdot t^{-1/2}$ & $\left\{0.1, 0.01, 0.001\right\}$\\
           \hline
           \radagrad_u & $\eta_t = \eta$, $\Gamma=d$ & $\left\{1.0, 0.1, 0.01\right\}$\\
           \hline
           \radagrad_s & $\eta_t = \eta$, $\Gamma=d$ & $\left\{1.0, 0.1, 0.01\right\}$\\
           \hline
           AdaGrad_F_u & $\eta_t = \eta$ & $\left\{1.0, 0.1, 0.01\right\}$\\
           \hline
           AdaGrad_F_s & $\eta_t = \eta$ & $\left\{1.0, 0.1, 0.01\right\}$\\
           \hline
        \end{tabular}
\end{table}

\begin{figure*}[!tb]
    \centering
       \includegraphics[width=0.5\textwidth]{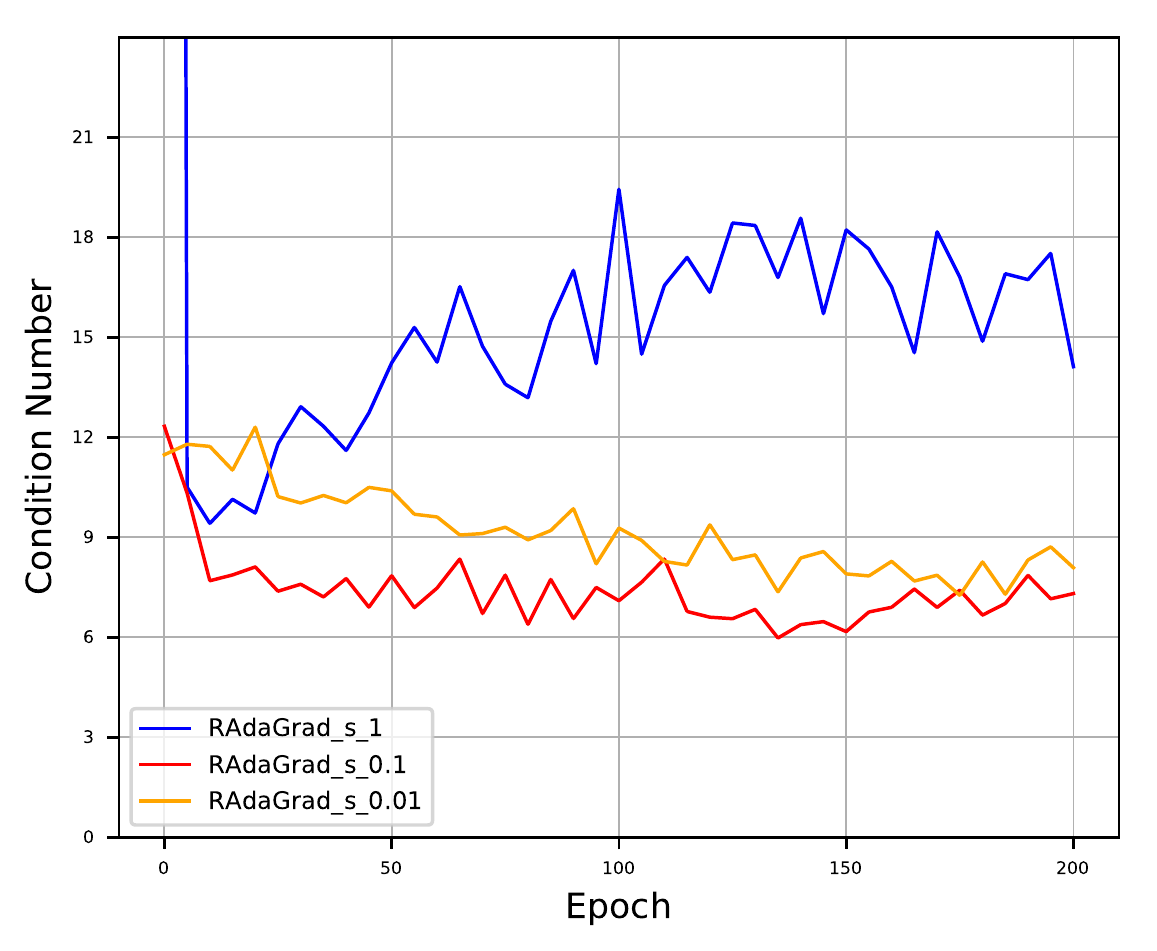}
        \caption{\small The condition number of \radagrad is bounded for different learning rates.}
        \label{fig:condtion_number}
\end{figure*}

\begin{figure*}[!tb]
    \centering
    \includegraphics[width=1.0\textwidth]{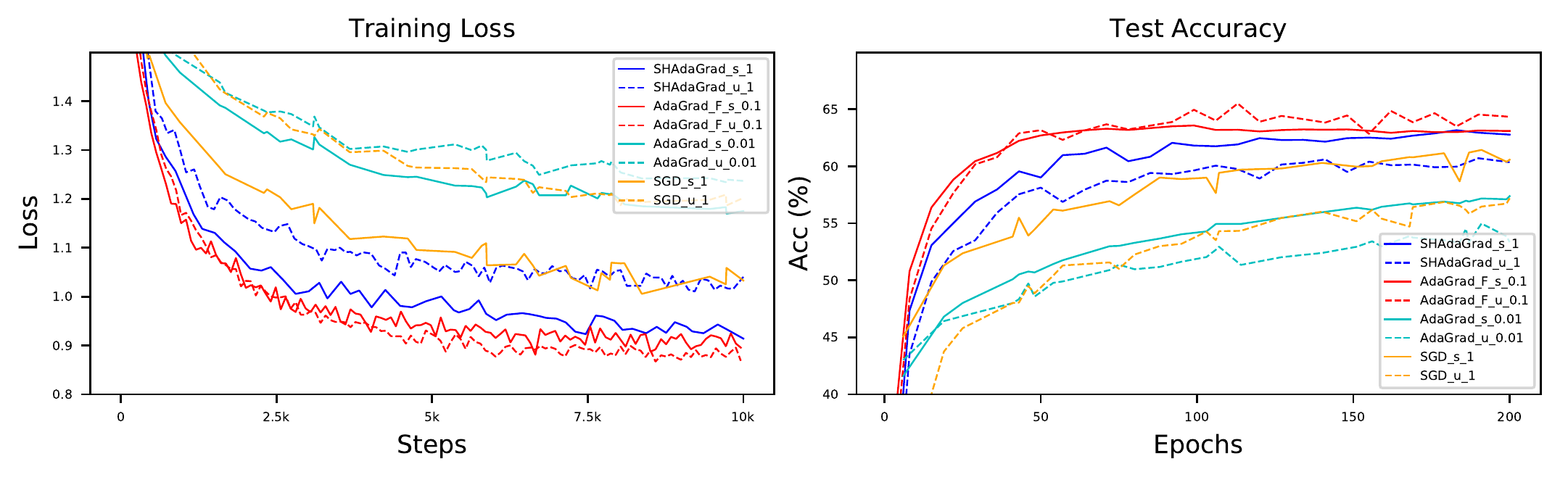}
    \caption{\small Convergence of optimizers (\_u) and their random shuffling version (\_s) on CIFAR-10 image classification tasks, which shows acceleration effect taken from the random shuffling and adaptive learning rates.}
    \label{fig:convergence_rate}
    \vspace{-15pt}
\end{figure*}
From fig.~\ref{fig:condtion_number}, we validate \emph{the condition number of $\mH_{i,t}$ will not increase with the number of iteration growth as Assumption~\ref{ass:H_full_column_rank_ass_pre} presented}.
From fig.~\ref{fig:convergence_rate}, we have two observations. First, \emph{random shuffling can actually take faster convergence for different optimizers in neural network training} except for AdaGrad\_F.
Second, \emph{adaptive gradient methods are usually faster than SGD in both uniform sampling and random shuffling settings}.
\vspace{-5pt}

\section{Conclusion}
\vspace{-6pt}
In this paper, we provide a novel perspective to illustrate that Adagrad variants can be faster than SGD after finite epochs in non-convex and random shuffling settings.
Under an additional mild assumption, we propose a minor revision of Adagrad, named \radagrad, and obtain a better convergence rate, i.e., $\tilde{O}(T^{-1/2})$, compared with previous best-known results.
Besides, we conduct extensive expeirments to validate the additional mild assumption and the acceleration effect taken from the introduction of second moments and random shuffling.  

\bibliographystyle{unsrt}  
\bibliography{references}  

\newpage
\appendix
\section{Notations and Assumptions for the Appendix}
In this section, we introduce some notations and assumptions used in this paper.
\subsection{Notations}
We denote the objective function as follows
\begin{equation}
    \label{def:objective_function_general_form}
    \min_{\vx\in \sR^d}\quad f(\vx)\coloneqq\mathbb{E}\left[F(\vx;\vzeta)\right],
\end{equation}
where $F(\vx,\vzeta)$ is the stochastic component indexed by some random variable $\vzeta$. 
$F(\vx,\vzeta)$ is smooth, and possibly non-convex. 
Let $\grad F(\vx,\vzeta)$ denote the stochastic gradient of $f(\vx)$.

The finite-sum objective is a special case of Eq.~\ref{def:objective_function_general_form} where $f(\vx)$ with finite sampled stochastic variables $\vzeta$.
It can be formulated as
\begin{equation}
    \min_{\vx \in \sR^d}\quad f(\vx) = \frac{1}{n}\sum_{i=1}^n f_i(\vx)
\end{equation}
whose stochastic gradient for the $i$-th instance is $\grad f_i(\vx)$.

Here, we describe the random shuffling setting in optimization procedure. 
Before each epoch, e.g., $i$-th epoch, beginning, we sample some permutation $\sigma_i$ of the set $\sI_n\coloneqq\left\{1, 2, \ldots, n\right\}$, and partitions $\sigma_i$ into mini-batch of equal size $\left\{\sB^i_1, \sB^i_2, \ldots, \sB^i_m\right\}$, where we require $\sB^i_1\cup\sB^i_2\cup\ldots\cup\sB^i_m = \sigma_i$ and $\sB^i_j\cap\sB^i_k =\emptyset, \forall j \not= k$. 
Then, the mini-batch gradient calculated at iteration $j$ in this epoch corresponds to $\grad f_{\sB^i_j}(\vx)$ denoted as
\begin{equation}
    \grad f_{\sB^i_j}(\vx) \coloneqq \frac{1}{\left|\sB^i_j\right|}\sum_{k\in \sB^i_j}f_k(\vx).
\end{equation}

Moreover, We denote $\vx_j^i$ as the parameter at the $j$-th iteration of the $i$-th epoch and 
\begin{equation}
    \label{eq:H_matrix_definition}
    \mH_{i,t} \coloneqq \left[\grad f_{\sB_1^t}(\vx_1^t), \grad f_{\sB_2^t}(\vx_2^t),\ldots, \grad f_{\sB_i^t}(\vx_i^t)\right]\in \sR^{d\times i},
\end{equation}
where $m$ and $d$ are presented as the number of iterations in each epoch and the dimension of the parameters, respectively.
With the definition of $\mH_{m,t}$, we define the matrix
\begin{equation}
    \label{eq:G_matrix_definition}
    \mG_{i,t} \coloneqq \sum\limits_{\tau=1}^{t-1}\mH_{m,\tau}\mH_{m,\tau}^\top+ \mH_{i,t}\mH_{i,t}^\top+\frac{\delta_{i,t}}{\Gamma}I,
\end{equation}  
where $\delta_{i,t}$ and $\Gamma$ are the perturbation and the scaling hyper-parameter to keep the positive-definite property for $\mG_{m,t}$.
Besides, for any real matrix $\mM$, we denote the maximum, the minimum and the $i$-th non-zero singular value as $\lambda_{max}(\mM), \lambda_{min}(\mM), \lambda_{i}(\mM)$, respectively.

\subsection{Assumptions}
In this subsection, we list our assumptions where Assumption~\ref{ass:comm_ass} introduces some common assumptions used in various previous work~\cite{fang2018spider, zaheer2018adaptive,ward2018adagrad}, and Assumption~\ref{ass:H_full_column_rank_ass} is required in our proof additionally.
To illustrate the rationality, we validate Assumption~\ref{ass:H_full_column_rank_ass} with various experiments.
\begin{assumption}
    \label{ass:comm_ass}
    We assume the following
    \begin{enumerate}
        \item The $\Delta\coloneqq f(\vx_1^1)-f^* <\infty$ where $f^*=\inf_{\vx\in \sR^{d}}f(\vx)$ is the global infimum value of $f(\vx)$.
        \item The component function $f_i(\vx)$ is $L$-smooth, i.e., for all $\vx,\vy\in \sR^{d}$ and $i\in \sI_n$, $\left\|\grad f_i(\vx)-\grad f_i(\vy)\right\|\le L\left\|\vx-\vy\right\|$.
        \item The stochastic gradient has a bounded variance, i.e., for any $i\in \sI_n$, $\mathbb{E}\left[\left\|\grad f_i(\vx) - \grad f(\vx)\right\|^2\right]\ge c_\sigma^2$.
        \item The norm of stochastic gradient is upper bounded, i.e., for any $i\in \sI_n$, $\left\|\grad f_i(\vx)\right\|\le \max\left\{c_{\sigma}, G^{\prime}\right\} \coloneqq G$.
    \end{enumerate}
\end{assumption}
\begin{assumption}
    \label{ass:H_full_column_rank_ass}
    Without loss of generality, we assume $d\gg m$, $\mH_{m,t}$ has full column rank and bounded condition number formulated as $$\frac{\lambda_{max}\left(\mH^\top_{m,t}\mH_{m,t}\right)}{\lambda_{min}\left(\mH^\top_{m,t}\mH_{m,t}\right)}\le c_{\kappa}$$.
\end{assumption}

\newpage
\section{Existing Lemmas}
\begin{lemma}[Conjugate Rule in~\cite{halko2011finding}]
    \label{lem:conjugate_rule}
    Suppose that $\mM \succeq 0$. For every $\mA$, the matrix $\mA^*\mM\mA \succeq 0$ where $\mA^*$ means the conjugate transpose matrix of $\mA$, In particular,
    \begin{equation}
        \label{eq:conjugate_rule}
        \begin{split}
            \mM\preceq \mN \Longrightarrow \mA^*\mM\mA\preceq \mA^*\mN\mA.
        \end{split}
    \end{equation}
\end{lemma}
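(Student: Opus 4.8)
The plan is to reduce both assertions to the defining scalar characterization of positive semidefiniteness: a Hermitian matrix $\mM$ satisfies $\mM \succeq 0$ if and only if $\vv^*\mM\vv \ge 0$ for every conformable vector $\vv$. Once this characterization is in hand, the whole lemma follows from a single change of variable together with one linearity observation, so I do not anticipate a genuine obstacle; the substance is entirely the bookkeeping of conjugate transposes and dimensions.

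First I would prove the unconditional claim that $\mA^*\mM\mA \succeq 0$ whenever $\mM \succeq 0$. Fix an arbitrary vector $\vv$ whose length matches the number of columns of $\mA$, and expand the quadratic form as $\vv^*(\mA^*\mM\mA)\vv = (\mA\vv)^*\mM(\mA\vv)$. Introducing $\vw = \mA\vv$, the right-hand side becomes $\vw^*\mM\vw$, which is nonnegative by the hypothesis $\mM \succeq 0$. Since $\vv$ is arbitrary and $\mA^*\mM\mA$ is Hermitian (its conjugate transpose is itself, because $(\mA^*\mM\mA)^* = \mA^*\mM^*\mA = \mA^*\mM\mA$), this establishes $\mA^*\mM\mA \succeq 0$.

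For the \emph{in particular} clause I would simply apply the statement just proved to the difference $\mN - \mM$. By definition, the hypothesis $\mM \preceq \mN$ is exactly $\mN - \mM \succeq 0$. Invoking the first part with $\mN - \mM$ in place of $\mM$ yields $\mA^*(\mN - \mM)\mA \succeq 0$. Because the map $\mX \mapsto \mA^*\mX\mA$ is linear, $\mA^*(\mN - \mM)\mA = \mA^*\mN\mA - \mA^*\mM\mA$, and rearranging this Loewner inequality gives $\mA^*\mM\mA \preceq \mA^*\mN\mA$, which completes the argument.

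The only point demanding care, rather than a real difficulty, is dimensional consistency: $\mA$ need not be square, so I must evaluate each quadratic form on vectors of the correct length and verify that $\mA^*\mM\mA$ and $\mA^*\mN\mA$ are well-defined Hermitian matrices sharing a common size. Retaining the complex conjugate transpose $\mA^*$ throughout keeps the proof valid over $\mathbb{C}$, and it specializes verbatim to the ordinary transpose in the real case relevant to the matrices $\mG_{i,t}$ used later in the paper.
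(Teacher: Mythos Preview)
Your argument is correct and is exactly the standard elementary proof of the conjugate rule: test $\mA^*\mM\mA$ against an arbitrary vector, substitute $\vw=\mA\vv$, and then apply the first part to $\mN-\mM$ using linearity of $\mX\mapsto \mA^*\mX\mA$. The dimensional and Hermitian checks you flag are the only bookkeeping, and you have them right.

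Note, however, that the paper does not actually supply its own proof of this lemma: it is listed under ``Existing Lemmas'' with a citation to \cite{halko2011finding} and is stated without proof, alongside Hoeffding's inequality and the Sherman--Morrison formula. So there is no paper-side argument to compare against; your write-up simply fills in the omitted (and entirely routine) verification.
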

\begin{lemma}[Hoeffding's inequality]
    \label{lem:hoeffding_s_inequality}
    Let $\rz_1,\rz_2,\ldots,\rz_n$ be independent bounded random variables with $\rz_i\in\left[a, b\right]$ for all $i$, where $-\infty<a<b<\infty$. Then
    \begin{equation}
        \label{ineq:hoeffding_s_inequality_1}
        \begin{split}
            \mathbb{P}\left[\frac{1}{n}\sum_{i=1}^n\left(\rz_i-\mathbb{E}\left[\rz_i\right]\right)\ge t\right]\le \exp\left(-\frac{2nt^2}{\left(b-a\right)^2}\right)
        \end{split}
    \end{equation}
    and 
    \begin{equation}
        \label{ineq:hoeffding_s_inequality_1}
        \begin{split}
            \mathbb{P}\left[\frac{1}{n}\sum_{i=1}^n\left(\rz_i-\mathbb{E}\left[\rz_i\right]\right)\le -t\right]\le \exp\left(-\frac{2nt^2}{\left(b-a\right)^2}\right)
        \end{split}
    \end{equation}
    for all $t\ge 0$.
\end{lemma}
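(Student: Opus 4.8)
The plan is to prove both tail bounds by the \emph{Chernoff bounding method} (exponential Markov inequality), reducing the whole statement to a single moment-generating-function estimate. First I would center the variables, writing $\ry_i \coloneqq \rz_i - \mathbb{E}[\rz_i]$, so that each $\ry_i$ has mean zero and lies in an interval of width $b-a$ that contains $0$. Fixing any $s>0$ and applying Markov's inequality to the nonnegative random variable $\exp\!\big(s\sum_{i=1}^n \ry_i\big)$ gives
\[
\mathbb{P}\!\left[\tfrac{1}{n}\sum_{i=1}^n \ry_i \ge t\right] \le \exp(-snt)\,\mathbb{E}\!\left[\exp\!\Big(s\sum_{i=1}^n \ry_i\Big)\right].
\]
Because the $\rz_i$ are independent, so are the $\ry_i$, and the expectation factorizes as $\prod_{i=1}^n \mathbb{E}[\exp(s\ry_i)]$. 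Thus the problem is reduced to bounding the per-coordinate moment generating functions.

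The key step, and the main technical obstacle, is the auxiliary estimate known as \emph{Hoeffding's lemma}: for a mean-zero random variable $X$ supported on an interval of length $b-a$, one has $\mathbb{E}[\exp(sX)] \le \exp\!\big(s^2(b-a)^2/8\big)$. I would establish this by exploiting convexity of $x\mapsto e^{sx}$ to dominate it on the support by the chord through the endpoints; taking expectations and using $\mathbb{E}[X]=0$ bounds $\mathbb{E}[\exp(sX)]$ by $\exp(\phi(h))$, where $h=s(b-a)$ and $\phi(h) = -ph + \ln(1-p+pe^{h})$ for an appropriate $p\in[0,1]$. A direct computation shows $\phi(0)=0$ and $\phi'(0)=0$, while $\phi''(h)=r(1-r)\le \tfrac14$ for $r = pe^{h}/(1-p+pe^{h})\in[0,1]$. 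Taylor's theorem with the Lagrange form of the remainder then yields $\phi(h)\le h^2/8$, which is exactly the claimed bound. This convexity-plus-second-derivative argument is where essentially all the work lies.

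Combining the factorized bound with Hoeffding's lemma applied to each $\ry_i$ gives
\[
\mathbb{P}\!\left[\tfrac{1}{n}\sum_{i=1}^n \ry_i \ge t\right] \le \exp\!\left(-snt + \tfrac{n s^2 (b-a)^2}{8}\right).
\]
Finally I would optimize over the free parameter $s>0$: the exponent is a quadratic in $s$ minimized at $s^\star = 4t/(b-a)^2$, and substituting produces the exponent $-2nt^2/(b-a)^2$, which is precisely the upper-tail inequality. The lower-tail inequality then follows with no extra work by applying the upper-tail result to the variables $-\rz_i$, which lie in $[-b,-a]$, an interval of the same width $b-a$; this symmetry turns the event $\tfrac1n\sum_i(\rz_i-\mathbb{E}[\rz_i])\le -t$ into an upper-tail event for $-\rz_i$ and yields the identical bound.
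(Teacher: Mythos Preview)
Your proof is correct and follows the standard Chernoff--Hoeffding argument: exponential Markov, independence to factorize the mgf, Hoeffding's lemma via convexity and a second-derivative bound on $\phi$, optimization in $s$, and a symmetry argument for the lower tail. There is nothing to fault in the logic.

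As for comparison with the paper: there is nothing to compare. The paper lists Hoeffding's inequality in its ``Existing Lemmas'' section and simply states it without proof, treating it as a classical result to be cited rather than re-derived. So your write-up supplies strictly more than the paper does here; the approach you take is the textbook one and would be exactly what any reader filling in the omitted proof would expect.
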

\begin{lemma}[Lemma 13 in \cite{duchi2011adaptive}]
    \label{lem:matrix_root_positive_definite_lemma}
    Let $\mN\succeq \mM \succeq 0$ be symmetric $d\times d$ matrices. Then $\mN^{\frac{1}{2}}\succeq \mM^{\frac{1}{2}}$.
\end{lemma}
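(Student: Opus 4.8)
The plan is to prove the operator monotonicity of the square root by reducing it to the (easier) order-reversing property of matrix inversion, which I reach through the integral representation of the matrix square root. For any symmetric positive semidefinite $\mM$ I would start from
\begin{equation*}
    \mM^{1/2} = \frac{2}{\pi}\int_0^\infty \mM\left(\mM + s^2\mI\right)^{-1}\, ds,
\end{equation*}
verified by diagonalizing $\mM$ and checking the scalar identity $\sqrt{a} = \frac{2}{\pi}\int_0^\infty \frac{a}{a+s^2}\, ds$ eigenvalue by eigenvalue; the zero eigenvalues contribute $0$ to both sides, so positive \emph{semi}definiteness (rather than definiteness) is harmless. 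Subtracting the representations for $\mN$ and $\mM$ and using the resolvent rewriting $\mM(\mM+s^2\mI)^{-1} = \mI - s^2(\mM+s^2\mI)^{-1}$ yields
\begin{equation*}
    \mN^{1/2} - \mM^{1/2} = \frac{2}{\pi}\int_0^\infty s^2\left[\left(\mM+s^2\mI\right)^{-1} - \left(\mN+s^2\mI\right)^{-1}\right]\, ds,
\end{equation*}
so it suffices to show each integrand is positive semidefinite, i.e. that $\mN \succeq \mM$ implies $(\mM+s^2\mI)^{-1} \succeq (\mN+s^2\mI)^{-1}$ for every $s$.

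The key sub-step is the order reversal of inversion, and here I would lean on the conjugate rule already available as Lemma~\ref{lem:conjugate_rule}. Since adding $s^2\mI$ preserves the order, write $\mP := \mM+s^2\mI$ and $\mQ := \mN+s^2\mI$, so $\mQ \succeq \mP \succ 0$. Conjugating by $\mP^{-1/2}$ via Lemma~\ref{lem:conjugate_rule} gives $\mP^{-1/2}\mQ\mP^{-1/2} \succeq \mI$; a symmetric matrix with all eigenvalues at least $1$ has inverse with all eigenvalues at most $1$, hence $\mP^{1/2}\mQ^{-1}\mP^{1/2} = \left(\mP^{-1/2}\mQ\mP^{-1/2}\right)^{-1} \preceq \mI$, and conjugating back by $\mP^{-1/2}$ (again Lemma~\ref{lem:conjugate_rule}) gives $\mQ^{-1} \preceq \mP^{-1}$. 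Thus every integrand is $\succeq 0$, and since the positive semidefinite matrices form a closed convex cone stable under limits of Riemann sums, the integral — and therefore $\mN^{1/2}-\mM^{1/2}$ — is positive semidefinite.

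The only genuinely delicate point is rigorously justifying the integral representation and the passage to the limit (convergence of the improper integral, and preservation of the PSD cone under integration); the rest is elementary linear algebra. I therefore expect this analytic justification to be the main obstacle, and as a fully self-contained alternative that sidesteps the integral entirely I would argue by contradiction. Suppose $\mN^{1/2}-\mM^{1/2}$ has a negative eigenvalue $\mu<0$ with unit eigenvector $\vv$, so that $\mN^{1/2}\vv = \left(\mM^{1/2}+\mu\mI\right)\vv$. On one hand $\vv^\top\mN^{1/2}\vv \ge 0$ forces $\vv^\top\mM^{1/2}\vv + \mu \ge 0$, i.e. $\vv^\top\mM^{1/2}\vv \ge |\mu|$. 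On the other hand $\vv^\top\mN\vv \ge \vv^\top\mM\vv$, combined with $\|\mN^{1/2}\vv\|^2 = \vv^\top\mN\vv$ and $\|\mN^{1/2}\vv\|^2 = \vv^\top\mM\vv + 2\mu\,\vv^\top\mM^{1/2}\vv + \mu^2$, gives $2\mu\,\vv^\top\mM^{1/2}\vv + \mu^2 \ge 0$, which (dividing by $\mu<0$) forces $\vv^\top\mM^{1/2}\vv \le |\mu|/2$. Together these give $|\mu| \le \vv^\top\mM^{1/2}\vv \le |\mu|/2$, a contradiction. I would present this elementary eigenvector argument as the clean primary route and retain the integral representation as the conceptual explanation of why monotonicity holds.
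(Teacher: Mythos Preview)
Your proposal is correct; both routes you sketch are valid. The paper's own proof is closest to your second, elementary eigenvector argument, but the algebraic manipulation differs: the paper takes an arbitrary eigenpair $(\lambda,\vx)$ of $\mN^{1/2}-\mM^{1/2}$, rewrites the eigenvalue equation as $(\mN^{1/2}-\lambda\mI)\vx = \mM^{1/2}\vx$, and takes the inner product with $\mN^{1/2}\vx$ to obtain
\[
\vx^\top\mN\vx - \lambda\,\vx^\top\mN^{1/2}\vx \;=\; \vx^\top\mN^{1/2}\mM^{1/2}\vx \;\le\; \sqrt{(\vx^\top\mN\vx)(\vx^\top\mM\vx)} \;\le\; \vx^\top\mN\vx,
\]
the first bound being Cauchy--Schwarz and the second $\mM\preceq\mN$; this immediately forces $\lambda\ge 0$. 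So where the paper dispatches the problem in one Cauchy--Schwarz stroke on the cross term, your contradiction argument instead expands $\|\mN^{1/2}\vv\|^2$ and pairs the resulting inequality with $\vv^\top\mN^{1/2}\vv\ge 0$ to squeeze $\vv^\top\mM^{1/2}\vv$ between $|\mu|$ and $|\mu|/2$. Both are short and self-contained; the paper's version is a touch slicker. Your integral-representation route is a genuinely different (and more conceptual) argument---it is the standard L\"owner--Heinz approach and buys you the full operator-monotonicity of $t\mapsto t^r$ for $r\in(0,1]$ essentially for free, at the cost of the analytic justification you flag. For the paper's purposes, the one-line eigenvector argument is all that is needed.
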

\begin{proof}
    This lemma had been proved in \cite{duchi2011adaptive}, we include a proof for the convenience of readers.
    Let $\lambda$ be a eigenvalue of $\mN^{\frac{1}{2}} - \mM^{\frac{1}{2}}$, corresponding to some eigenvector $\vx$. 
    Hence, we have $\left(\mN^{\frac{1}{2}} - \lambda I\right)\vx= \mM^{\frac{1}{2}}\vx $. Taking the inner product of both size with $\vx^\top\mN^{\frac{1}{2}}$, we have
    \begin{equation}
        \label{ineq:lem_B3_upper_bound}
        \begin{aligned}[b]
            &\underbrace{\vx^\top \mN \vx - \lambda \vx^\top \mN^{\frac{1}{2}}\vx}_{T_1} = \vx^\top \mN^{\frac{1}{2}}\left(\mN^{\frac{1}{2}} - \lambda I\right)\vx\\
            = & \vx^\top \mN^{\frac{1}{2}}\mM^{\frac{1}{2}}\vx  \le \left\|\mN^{\frac{1}{2}}\vx\right\|\left\|\mM^{\frac{1}{2}}\vx\right\| = \sqrt{\vx^\top \mN\vx \cdot \vx^\top \mM\vx} \le \underbrace{\vx^\top\mN \vx}_{T_2}.
        \end{aligned}
    \end{equation}
    Thus, with $T_1\le T_2$ and $\vx^\top \mN^{\frac{1}{2}}\vx\ge 0$, we obtain $\lambda\ge 0$ to complete the proof.
\end{proof}
\begin{lemma}
    \label{lem:matrix_inverse_positive_definite_lemma}
    Let $\mN\succeq \mM \succeq 0$ be symmetric $d\times d$ matrices. Then $\mN^{-1}\preceq \mM^{-1}$.
\end{lemma}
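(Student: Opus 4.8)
The plan is to reduce the statement to the scalar fact that $t\mapsto 1/t$ reverses order on $(0,\infty)$, applying the conjugate rule (Lemma~\ref{lem:conjugate_rule}) twice as a congruence transformation. Observe first that the statement presupposes $\mM$ invertible, i.e. $\mM\succ 0$; together with $\mN\succeq\mM$ this forces $\mN\succ 0$ as well (for $\vx\neq\vzero$, $\vx^\top\mN\vx\ge\vx^\top\mM\vx>0$), so both inverses exist and $\mM$ admits a symmetric positive definite square root $\mM^{1/2}$ with symmetric positive definite inverse $\mM^{-1/2}$.

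\textbf{Step 1 (normalize by $\mM$).} Apply Lemma~\ref{lem:conjugate_rule} with $\mA=\mM^{-1/2}$ to the hypothesis $\mM\preceq\mN$. Since $\mM^{-1/2}\mM\mM^{-1/2}=\mI$, this yields $\mI\preceq\mM^{-1/2}\mN\mM^{-1/2}=:\mP$, a symmetric positive definite matrix. \textbf{Step 2 (invert in the normalized coordinates).} Because $\mP\succeq\mI$, diagonalizing $\mP=\mQ\mLambda\mQ^\top$ with $\mQ$ orthogonal shows every eigenvalue of $\mP$ is at least $1$, hence every eigenvalue of $\mP^{-1}=\mQ\mLambda^{-1}\mQ^\top$ lies in $(0,1]$, i.e. $\mP^{-1}\preceq\mI$. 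Since $\mP^{-1}=\mM^{1/2}\mN^{-1}\mM^{1/2}$, this reads $\mM^{1/2}\mN^{-1}\mM^{1/2}\preceq\mI$. \textbf{Step 3 (undo the normalization).} Apply Lemma~\ref{lem:conjugate_rule} once more with $\mA=\mM^{-1/2}$ to the inequality of Step 2:
\begin{equation*}
    \mN^{-1}=\mM^{-1/2}\bigl(\mM^{1/2}\mN^{-1}\mM^{1/2}\bigr)\mM^{-1/2}\preceq\mM^{-1/2}\,\mI\,\mM^{-1/2}=\mM^{-1},
\end{equation*}
which is the claim.

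There is no genuine obstacle here; the only passage that deserves a line of justification is $\mP\succeq\mI\Rightarrow\mP^{-1}\preceq\mI$ in Step 2, and that is immediate from the spectral decomposition used above. Alternatively one can avoid square roots entirely: for $\mA\succ 0$ one has the variational identity $\vx^\top\mA^{-1}\vx=\max_{\vy}\bigl(2\vy^\top\vx-\vy^\top\mA\vy\bigr)$ (the maximizer being $\vy=\mA^{-1}\vx$); applying it with $\mA=\mN$ and $\mA=\mM$ and using $\vy^\top\mN\vy\ge\vy^\top\mM\vy$ for all $\vy$ gives $\vx^\top\mN^{-1}\vx\le\vx^\top\mM^{-1}\vx$ for every $\vx$, hence $\mN^{-1}\preceq\mM^{-1}$. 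Either route is a few lines; the congruence argument is the one that fits the machinery already set up in Lemma~\ref{lem:conjugate_rule} and Lemma~\ref{lem:matrix_root_positive_definite_lemma}.
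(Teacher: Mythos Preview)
Your proof is correct and follows essentially the same congruence-transformation approach as the paper: conjugate the hypothesis by $\mM^{-1/2}$ via Lemma~\ref{lem:conjugate_rule} to reduce to a comparison with $\mI$, use a spectral argument to reverse the order, then conjugate back. The paper's middle step is a cosmetic variant---it invokes the fact that $AB$ and $BA$ share eigenvalues to pass from $\mM^{-1/2}\mN\mM^{-1/2}\succeq\mI$ to $\mN^{1/2}\mM^{-1}\mN^{1/2}\succeq\mI$ and then conjugates by $\mN^{-1/2}$, whereas you invert $\mP$ directly and conjugate by $\mM^{-1/2}$ again---but the idea is the same.
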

\begin{proof}
    Since $\mN \succeq \mM$, we have $\mM^{-\frac{1}{2}}\mN \mM^{-\frac{1}{2}} = \left(\mM^{-\frac{1}{2}}\mN^{\frac{1}{2}}\right)\left(\mN^{\frac{1}{2}}\mM^{-\frac{1}{2}}\right)\succeq \mI$ because of Lemma~\ref{lem:conjugate_rule}. 
    Commuting the product of two matrice does not change the eigenvalues, hence all eigenvalues of $\mN^{\frac{1}{2}}\mM^{-1}\mN^{\frac{1}{2}}$ are larger than $1$. 
    Utilizing Lemma~\ref{lem:conjugate_rule} again, then we obtain $\mM^{-1}\succeq \mN^{-1}$ to complete the proof.
\end{proof}
\begin{lemma}[Sherman-Morrison formula]
    \label{lem:sherman_morrison_formula}
    Suppose $\mM\in\mathbb{R}^{d\times d}$ is an invertible square matrix and $\vu, \vv\in \mathbb{R}^{d}$ are column vectors. Then $\mM+\vu\vv^\top$ is invertible if and only if $1+\vv^\top\mM\vu \not= 0$. In this case,
    \begin{equation}
        \label{eq:sherman_morrison_formula}
        \left(\mM + \vu\vv^\top\right)^{-1} = \mM^{-1} - \frac{\mM^{-1}\vu\vv^\top\mM^{-1}}{1+\vv^\top\mM^{-1}\vu}
    \end{equation}
\end{lemma}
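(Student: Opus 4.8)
The plan is to prove the formula by direct verification, handling the invertibility criterion and the explicit inverse together. Throughout I would write $g \coloneqq 1 + \vv^\top\mM^{-1}\vu$ for the scalar appearing in the denominator; since $\mM$ is invertible this quantity is well defined, and it is the quantity that the stated criterion should involve (the $\mM$ in $1+\vv^\top\mM\vu$ in the statement being a typo for $\mM^{-1}$).

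First I would treat the case $g \neq 0$. Here the candidate matrix
\[
    \mN \coloneqq \mM^{-1} - \frac{\mM^{-1}\vu\vv^\top\mM^{-1}}{g}
\]
is well defined, and I would simply multiply it out. Expanding $(\mM + \vu\vv^\top)\mN$ and using that $\vv^\top\mM^{-1}\vu = g - 1$ is a \emph{scalar} that can be pulled out of the matrix products, the three cross terms collect into $\vu\vv^\top\mM^{-1}\bigl(1 - \tfrac{1}{g} - \tfrac{g-1}{g}\bigr)$, whose coefficient vanishes, leaving exactly $\mI$. The same computation applied to $\mN(\mM + \vu\vv^\top)$ again yields $\mI$, so $\mM + \vu\vv^\top$ is invertible with inverse $\mN$. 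This establishes the ``if'' direction of the invertibility claim and the displayed inverse formula simultaneously.

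Second I would establish the converse, that $g = 0$ forces $\mM + \vu\vv^\top$ to be singular. The key observation is that $g = 0$ already forces $\vu \neq \vzero$ (otherwise $\vv^\top\mM^{-1}\vu = 0$ and $g = 1$), so $\vw \coloneqq \mM^{-1}\vu$ is nonzero because $\mM$ is invertible. Computing $(\mM + \vu\vv^\top)\vw = \vu + \vu(\vv^\top\mM^{-1}\vu) = g\,\vu = \vzero$ exhibits $\vw$ as a nontrivial kernel vector, so the matrix is not invertible. Combined with the first part this gives the full equivalence.

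This is a verification argument, so there is no deep obstacle; the only thing to watch is the bookkeeping of scalar-versus-matrix factors, in particular remembering that $\vv^\top\mM^{-1}\vu$ is a scalar and may be commuted freely while $\vu\vv^\top$ and $\mM^{-1}$ are matrices and may not. An alternative route to the invertibility criterion is the matrix determinant lemma $\det(\mM + \vu\vv^\top) = (\det\mM)\,g$, which delivers the equivalence in one line; I prefer the explicit kernel vector above since it is self-contained and avoids separately proving the determinant identity.
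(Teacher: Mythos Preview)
Your proposal is correct and is the standard direct-verification proof of the Sherman--Morrison formula; you also correctly flag the typo in the invertibility criterion (it should read $1+\vv^\top\mM^{-1}\vu\neq 0$). The paper itself offers no proof at all: the lemma appears in the ``Existing Lemmas'' section as a quoted classical result, so there is nothing to compare against beyond noting that your argument is self-contained while the paper simply takes the identity as known.
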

\newpage
\section{Important Lemmas}
\begin{lemma}
    \label{lem:positive_definite_property_in_one_epoch}
    In \radagrad, suppose that the Assumption~\ref{ass:comm_ass} hold, and the perturbation satisfies $0\le \delta_{i+1,t} - \delta_{i,t} \le G^2$. For any $1\le i \le j \le m$ and $\Gamma\ge 1$, we have
    $$\mG_{j,t}^{\frac{1}{2}} \preceq \mG_{i,t}^{\frac{1}{2}} + \sqrt{2m}G\cdot{\mI}$$.
\end{lemma}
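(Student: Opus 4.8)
The plan is to prove the statement $\mG_{j,t}^{1/2} \preceq \mG_{i,t}^{1/2} + \sqrt{2m}\,G\cdot \mI$ by telescoping: it suffices to control each one-step increment $\mG_{k+1,t}^{1/2} - \mG_{k,t}^{1/2}$ for $i \le k < j$, sum over $k$, and use that $j-i \le m$. So first I would isolate the difference
\begin{equation*}
    \mG_{k+1,t} - \mG_{k,t} = \mH_{k+1,t}\mH_{k+1,t}^\top - \mH_{k,t}\mH_{k,t}^\top + \frac{\delta_{k+1,t}-\delta_{k,t}}{\Gamma}\mI = \grad f_{\sB_{k+1}^t}(\vx_{k+1}^t)\,\grad f_{\sB_{k+1}^t}(\vx_{k+1}^t)^\top + \frac{\delta_{k+1,t}-\delta_{k,t}}{\Gamma}\mI,
\end{equation*}
since $\mH_{k+1,t}$ is $\mH_{k,t}$ with one extra column appended. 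By the Gradient Bounded Assumption the rank-one term has operator norm at most $G^2$, and by hypothesis $0 \le \delta_{k+1,t}-\delta_{k,t}\le G^2$ with $\Gamma \ge 1$, so $\vzero \preceq \mG_{k+1,t} - \mG_{k,t} \preceq 2G^2\,\mI$, which also gives $\mG_{k,t} \preceq \mG_{k+1,t} \preceq \mG_{k,t} + 2G^2 \mI$.

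The key step is to pass from a bound on $\mG_{k+1,t} - \mG_{k,t}$ to a bound on $\mG_{k+1,t}^{1/2} - \mG_{k,t}^{1/2}$. Here I would invoke operator monotonicity of the square root (Lemma~\ref{lem:matrix_root_positive_definite_lemma}): from $\mG_{k+1,t} \preceq \mG_{k,t} + 2G^2\mI$ one gets $\mG_{k+1,t}^{1/2} \preceq (\mG_{k,t} + 2G^2\mI)^{1/2}$. Then I need the elementary subadditivity-type inequality $(\mA + c\mI)^{1/2} \preceq \mA^{1/2} + \sqrt{c}\,\mI$ for PSD $\mA$ and $c \ge 0$; this follows because $(\mA^{1/2} + \sqrt{c}\,\mI)^2 = \mA + \sqrt{c}\,(\mA^{1/2} + \mA^{1/2}) + c\mI \succeq \mA + c\mI$ (the cross term is PSD since $\mA^{1/2} \succeq 0$), and then one applies Lemma~\ref{lem:matrix_root_positive_definite_lemma} again. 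With $c = 2G^2$ this yields $\mG_{k+1,t}^{1/2} \preceq \mG_{k,t}^{1/2} + \sqrt{2}\,G\,\mI$ for each single step.

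Finally I would telescope: chaining $\mG_{k+1,t}^{1/2} \preceq \mG_{k,t}^{1/2} + \sqrt{2}\,G\,\mI$ from $k = i$ up to $k = j-1$ gives $\mG_{j,t}^{1/2} \preceq \mG_{i,t}^{1/2} + (j-i)\sqrt{2}\,G\,\mI \preceq \mG_{i,t}^{1/2} + m\sqrt{2}\,G\,\mI$. Note this actually proves the slightly stronger bound $m\sqrt 2\,G$ rather than $\sqrt{2m}\,G$; I suspect the intended statement bounds a single increment per epoch differently, or uses a concavity estimate $\sum_{k} \sqrt{\delta_{k+1,t}-\delta_{k,t}} \le \sqrt{m}\sqrt{\delta_{m,t}}$ type argument to sharpen the dimension dependence — I would check whether the adaptive choice of $\delta$ in Theorem~\ref{thm:radagrad_convergence_rate}, where $\delta_{k+1,t}-\delta_{k,t} = \|\grad f_{\sB_{k+1}^t}(\vx_{k+1}^t)\|^2$ matches exactly the rank-one bump, lets one treat the two contributions together and extract a $\sqrt{m}$ via Cauchy--Schwarz instead of a crude $m$. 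The main obstacle is getting this constant right: the naive telescoping loses a factor $\sqrt{m}$, so the delicate part is the simultaneous treatment of the accumulated rank-one updates and the accumulated perturbation, rather than any single matrix inequality, each of which is routine given the lemmas already in the excerpt.
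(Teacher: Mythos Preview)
Your argument has all the right ingredients --- the identification of $\mG_{k+1,t}-\mG_{k,t}$ as a rank-one update plus a scalar shift bounded by $2G^2\mI$, operator monotonicity of the square root, and the subadditivity $(\mA+c\mI)^{1/2}\preceq \mA^{1/2}+\sqrt{c}\,\mI$ --- but your telescoping strategy applies the square-root step too early, and that is exactly why you end up with $m\sqrt{2}\,G$ instead of $\sqrt{2m}\,G$. (Note also that $m\sqrt{2}\,G\ge\sqrt{2m}\,G$, so your bound is \emph{weaker}, not stronger as you wrote.)

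The paper's proof uses the very same inequalities but in a different order: it first bounds the \emph{total} jump
\[
\mG_{j,t}-\mG_{i,t}=\sum_{k=i+1}^{j}\grad f_{\sB_k^t}(\vx_k^t)\grad f_{\sB_k^t}(\vx_k^t)^\top+\frac{\delta_{j,t}-\delta_{i,t}}{\Gamma}\,\mI\preceq \bigl((j-i)G^2+(j-i)G^2/\Gamma\bigr)\mI\preceq 2mG^2\,\mI,
\]
and only then applies the square-root monotonicity and subadditivity \emph{once}, giving $\mG_{j,t}^{1/2}\preceq(\mG_{i,t}+2mG^2\mI)^{1/2}\preceq \mG_{i,t}^{1/2}+\sqrt{2m}\,G\,\mI$. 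In effect you already derived the per-step bounds that sum to $\mG_{j,t}-\mG_{i,t}\preceq 2mG^2\mI$; the only change needed is to sum \emph{before} taking the square root rather than after. No Cauchy--Schwarz trick, no special structure of $\delta$, and no concavity argument is required --- the $\sqrt{m}$ saving comes purely from the concavity of $t\mapsto\sqrt{t}$ applied once to the aggregate bound instead of $m$ times to the individual increments.
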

\begin{proof}
    It can be easily checked when $i=j$ holds. 
    Therefore, we only need to prove $i<j$.
    To simplify notations in the following proof, we set $\Delta\mG \coloneqq \sum_{k=i+1}^j \grad f_{\sB_k^t}(\vx_k^t)\grad f^\top_{\sB_k^t}(\vx_k^t)$.
    Then, according to the definition, we have
    \begin{equation}
        \label{eq:matrix_G_update_iteration}
        \begin{aligned}[b]
            \mG_{j,t} = \mG_{i,t} + \Delta\mG + \frac{\delta_{j,t}-\delta_{i,t}}{\Gamma}\cdot \mI. 
        \end{aligned}
    \end{equation}
    With the following fact, 
    \begin{equation}
        \label{ineq:delta_G_l2_norm_inequlities}
        \begin{aligned}[b]
            \lambda_{max}\left(\Delta\mG\right) \le tr\left(\Delta\mG\right) \mathop{\le}^{\circled{1}} (j-i)G^2 \le mG^2\quad \mathrm{and}\quad \delta_{j,t}-\delta_{i,t} = \sum_{k=i}^{j-1}\left(\delta_{k+1,t} - \delta_{k,t}\right) \le (j-i)G^2\le mG^2,
        \end{aligned}
    \end{equation}
    where $\circled{1}$ follows from the  gradient bounded condition, the forth item in Assumption~\ref{ass:comm_ass}, we have
    \begin{equation}
        \label{ineq:delat_G_positive_definite_inequality}
        \begin{aligned}
        & \Delta\mG + \frac{\delta_{j,t}-\delta_{i,t}}{\Gamma}\cdot I \preceq m\left(1+\frac{1}{\Gamma}\right)G^2 \cdot \mI \mathop{\preceq}^{\circled{1}} 2mG^2 \cdot\mI,
        \end{aligned}
    \end{equation}
    where $\circled{1}$ follows from the fact $\Gamma \ge 1$.
    Then, we obtain
    \begin{equation}
        \begin{aligned}[b]
            & \mG_{j,t} \mathop{\preceq}^{\circled{1}} \mG_{i,t}+2mG^2\cdot \mI\\
            \mathop{\Rightarrow}^{\circled{2}}\  &\mG_{j,t}^{\frac{1}{2}} \preceq \left(\mG_{i,t} + 2mG^2 \cdot \mI\right)^{\frac{1}{2}} \mathop{\preceq}^{\circled{3}} \mG_{i,t}^{\frac{1}{2}} + \sqrt{2m}G\cdot{\mI},
        \end{aligned}
    \end{equation}
    where $\circled{1}$ follows from Eq.~\ref{eq:matrix_G_update_iteration} and Eq.~\ref{ineq:delat_G_positive_definite_inequality}, $\circled{2}$ follows from Lemma~\ref{lem:matrix_root_positive_definite_lemma} and $\circled{3}$ follows from the following fact
    \begin{equation}
        \begin{aligned}[b]
            \left[\left(\mG_{i,t} + 2mG^2 \cdot \mI\right)^{\frac{1}{2}} \right]^2 = \mG_{i,t} + 2mG^2 \cdot \mI \preceq \mG_{i,t} + 2\sqrt{2m}G\mG_{i,t}^{\frac{1}{2}} + 2mG^2 \cdot \mI = \left(\mG_{i,t}^{\frac{1}{2}} + \sqrt{2m}G\cdot{\mI}\right)^2 ,
        \end{aligned}
    \end{equation}
    and Lemma~\ref{lem:matrix_root_positive_definite_lemma}.
    Thus, we complete the proof.
\end{proof}
\begin{lemma}
    \label{lem:bound_of_neighbor_parameters}
    In \radagrad, suppose that the Assumption~\ref{ass:comm_ass} hold, and the perturbation $\delta_{i,t}$ is no decreasing. For any $1\le i \le m$, we have
    $$\left\|\vx_{i+1}^t - \vx_i^t\right\|^2 \le \min\left\{\eta_{i,t}^2, \eta_{i,t}^2G^2\lambda^{-1}_{min}\left(\mG_{1,t}\right)\right\}$$.
\end{lemma}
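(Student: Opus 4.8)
The plan is to start from the update rule $\vx_{i+1}^t - \vx_i^t = -\eta_{i,t}\mG_{i,t}^{-1/2}\grad f_{\sB_i^t}(\vx_i^t)$ from eq.~\ref{eq:adagrad_update}, so that $\left\|\vx_{i+1}^t - \vx_i^t\right\|^2 = \eta_{i,t}^2\left\|\mG_{i,t}^{-1/2}\grad f_{\sB_i^t}(\vx_i^t)\right\|^2$. I would then bound this squared norm in two different ways, corresponding to the two terms inside the $\min$.

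For the first bound, I would show $\mG_{i,t} \succeq \grad f_{\sB_i^t}(\vx_i^t)\grad f^\top_{\sB_i^t}(\vx_i^t)$, which is immediate from the definition eq.~\ref{eq:G_matrix_definition} since all the other summands ($\sum_{\tau=1}^{t-1}\mH_{m,\tau}\mH_{m,\tau}^\top$, the remaining rank-one pieces of $\mH_{i,t}\mH_{i,t}^\top$, and $\frac{\delta_{i,t}}{\Gamma}\mI$) are positive semidefinite. By Lemma~\ref{lem:matrix_inverse_positive_definite_lemma} this gives $\mG_{i,t}^{-1} \preceq \left(\grad f_{\sB_i^t}(\vx_i^t)\grad f^\top_{\sB_i^t}(\vx_i^t)\right)^{\dagger}$ on the relevant subspace; the cleaner route is to note that for $\vg = \grad f_{\sB_i^t}(\vx_i^t)$ we have $\vg^\top\mG_{i,t}^{-1}\vg \le 1$ whenever $\mG_{i,t}\succeq \vg\vg^\top$ (a standard fact: if $\mG \succeq \vg\vg^\top$ then $\vg^\top\mG^{-1}\vg \le 1$, provable via Sherman--Morrison or by writing $\vg = \mG^{1/2}\vu$ and using $\|\vu\|^2 \le \vu^\top(\mI + \text{something})\vu$). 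Since $\left\|\mG_{i,t}^{-1/2}\vg\right\|^2 = \vg^\top\mG_{i,t}^{-1}\vg \le 1$, the first bound $\left\|\vx_{i+1}^t-\vx_i^t\right\|^2 \le \eta_{i,t}^2$ follows.

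For the second bound, I would use $\mG_{i,t} \succeq \mG_{1,t} \succeq \lambda_{\mathrm{min}}(\mG_{1,t})\mI$. The first inequality holds because $\mG_{i,t} - \mG_{1,t} = \sum_{k=2}^{i}\grad f_{\sB_k^t}(\vx_k^t)\grad f^\top_{\sB_k^t}(\vx_k^t) + \frac{\delta_{i,t}-\delta_{1,t}}{\Gamma}\mI \succeq 0$, using that $\delta_{i,t}$ is non-decreasing. Hence $\mG_{i,t}^{-1} \preceq \lambda_{\mathrm{min}}^{-1}(\mG_{1,t})\mI$ by Lemma~\ref{lem:matrix_inverse_positive_definite_lemma}, so $\left\|\mG_{i,t}^{-1/2}\vg\right\|^2 \le \lambda_{\mathrm{min}}^{-1}(\mG_{1,t})\|\vg\|^2 \le G^2\lambda_{\mathrm{min}}^{-1}(\mG_{1,t})$ using the Gradient Bounded Assumption ($\|\grad f_i(\vx)\|\le G$, which also bounds the averaged mini-batch gradient by convexity of the norm). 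Multiplying by $\eta_{i,t}^2$ gives the second bound, and taking the minimum of the two completes the proof.

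The only mildly delicate point is the inequality $\vg^\top\mG^{-1}\vg \le 1$ when $\mG \succeq \vg\vg^\top$; I expect this to be the main thing to get right, though it is standard. One clean way: write $\mG = \mG_0 + \vg\vg^\top$ with $\mG_0 \succeq 0$; if $\mG_0$ is invertible, Sherman--Morrison (Lemma~\ref{lem:sherman_morrison_formula}) gives $\vg^\top\mG^{-1}\vg = \frac{\vg^\top\mG_0^{-1}\vg}{1+\vg^\top\mG_0^{-1}\vg} < 1$, and the general PSD case follows by a continuity/limiting argument (replace $\mG_0$ by $\mG_0 + \varepsilon\mI$ and let $\varepsilon\to 0$), which is harmless here since $\mG_{i,t}$ is anyway positive definite once $\delta_{i,t}/\Gamma > 0$. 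Everything else is a direct application of the Löwner-order monotonicity lemmas (Lemmas~\ref{lem:matrix_root_positive_definite_lemma} and~\ref{lem:matrix_inverse_positive_definite_lemma}) already established in the excerpt.
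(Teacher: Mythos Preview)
Your proposal is correct and follows essentially the same route as the paper: the paper also writes $\mG_{i,t}=\overline{\mG}_{i,t}+\vg\vg^\top$ and applies Sherman--Morrison (Lemma~\ref{lem:sherman_morrison_formula}) to get $\vg^\top\mG_{i,t}^{-1}\vg=\frac{\vg^\top\overline{\mG}_{i,t}^{-1}\vg}{1+\vg^\top\overline{\mG}_{i,t}^{-1}\vg}\le 1$ for the first bound, and uses $\mG_{i,t}\succeq\mG_{1,t}$ together with Lemma~\ref{lem:matrix_inverse_positive_definite_lemma} and the gradient bound for the second. Your added remark about the limiting argument when $\overline{\mG}_{i,t}$ is only PSD is a nice bit of care that the paper leaves implicit.
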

\begin{proof}
    According to iterations in \radagrad, if we set $\overline{\mG}_{i,t} \coloneqq \mG_{i,t} - \grad f_{\sB_i^t}(\vx_i^t)\grad f^\top_{\sB_i^t}(\vx_i^t)$, we have
    \begin{equation}
        \label{ineq:bound_of_neighbor_parameters}
        \begin{aligned}[b]
            & \left\|\vx^t_{i+1} - \vx^t_i\right\|^2 = \left\|\eta_{i,t}\mG_{i,t}^{-\frac{1}{2}}\grad f_{\sB_i^t}(\vx_i^t)\right\|^2 = \underbrace{\eta_{i,t}^2 \grad f^\top_{\sB_i^t}(\vx_i^t)\mG_{i,t}^{-1}\grad f_{\sB_i^t}(\vx_i^t)}_{T_1}\\
            \le & \eta_{i,t}^2 \grad f^\top_{\sB_i^t}(\vx_i^t)\left(\overline{\mG}_{i,t} + \grad f_{\sB_i^t}(\vx_i^t)\grad f^\top_{\sB_i^t}(\vx_i^t)\right)^{-1}\grad f_{\sB_i^t}(\vx_i^t)\\
            \mathop{\le}^{\circled{1}} &\eta_{i,t}^2 \left(\grad f^\top_{\sB_i^\top}(\vx_i^t)\overline{\mG}_{i,t}^{-1}\grad f_{\sB_i^t}(\vx_i^t) -\frac{\left\|\grad f^\top_{\sB_i^t}(\vx_i^t)\overline{\mG}_{i,t}^{-1}\grad f_{\sB_i^t}(\vx_i^t)\right\|^2}{1 + \grad f^\top_{\sB_i^t}(\vx_i^t)\overline{\mG}_{i,t}^{-1}\grad f_{\sB_i^t}(\vx_i^t)} \right) =  \frac{\eta_{i,t}^2 \grad f^\top_{\sB_i^t}(\vx_i^t)\overline{\mG}_{i,t}^{-1}\grad f_{\sB_i^t}(\vx_i^t)}{1 + \grad f^\top_{\sB_i^t}(\vx_i^t)\overline{\mG}_{i,t}^{-1}\grad f_{\sB_i^t}(\vx_i^t)}\le \eta_{i,t}^2,
        \end{aligned}
    \end{equation}
    where $\circled{1}$ follows from Lemma~\ref{lem:sherman_morrison_formula}.
    For $T_1$ in Eq.~\ref{ineq:bound_of_neighbor_parameters}, we also have
    \begin{equation}
        \label{ineq:bound_of_neighbor_parameters_T1}
        \begin{aligned}[b]
            T_1 \mathop{\le}^{\circled{1}}  \eta_{i,t}^2 \grad f^\top_{\sB_i^t}(\vx_i^t)\mG_{1,t}^{-1}\grad f_{\sB_i^t}(\vx_i^t) \le \frac{\eta_{i,t}^2}{\lambda_{min}\left(\mG_{1,t}\right)}\left\|\grad f_{\sB_i^t}(\vx_i^t)\right\|^2 \mathop{\le}^{\circled{2}}  \eta_{i,t}^2G^2\lambda^{-1}_{min}\left(\mG_{1,t}\right),
        \end{aligned}
    \end{equation}
    where $\circled{1}$ follows from the fact $\mG_{i,t}\succeq \mG_{1,t}$ and Lemma~\ref{lem:matrix_inverse_positive_definite_lemma}, $\circled{2}$ follows from the the gradient bounded condition, i.e., the forth item in Assumption~\ref{ass:comm_ass}.
    Combining Eq.~\ref{ineq:bound_of_neighbor_parameters} with Eq.~\ref{ineq:bound_of_neighbor_parameters_T1}, we obtain
    \begin{equation}
        \label{ineq:bound_of_neighbor_parameters_final}
        \begin{aligned}[b]
            \left\|\vx_{i+1}^t - \vx_i^t\right\|^2 \le \min\left\{\eta_{i,t}^2, \eta_{i,t}^2G^2\lambda^{-1}_{min}\left(\mG_{1,t}\right)\right\}
        \end{aligned}
    \end{equation}
    to complete the proof.
\end{proof}
\begin{corollary}
    \label{lem:bound_of_parameters}
    In \radagrad, suppose that the Assumption~\ref{ass:comm_ass} hold, and the perturbation $\delta_{i,t}$ is no decreasing. For any $1\le i \le m$, we have
    \begin{equation}
        \label{ineq:bound_of_parameters}
        \left\|\vx_i^t - \vx_1^t\right\|^2\le \min\left\{\eta_{.,t}^2\lambda_{min}^{-1}\left(\mG_{1,t}\right)(i-1)^2G^2, \eta_{.,t}^2(i-1)^2\right\},
    \end{equation}
    when step size satisfies $\eta_{1,t} = \eta_{2,t} = \ldots = \eta_{m,t} = \eta_{.,t}$.
\end{corollary}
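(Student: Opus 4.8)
The plan is to obtain \eqref{ineq:bound_of_parameters} by telescoping the single-step bound in Lemma~\ref{lem:bound_of_neighbor_parameters} and applying the triangle inequality for norms. First I would write $\vx_i^t - \vx_1^t = \sum_{k=1}^{i-1}\left(\vx_{k+1}^t - \vx_k^t\right)$, so that by the triangle inequality $\left\|\vx_i^t - \vx_1^t\right\| \le \sum_{k=1}^{i-1}\left\|\vx_{k+1}^t - \vx_k^t\right\|$. Since the perturbation $\delta_{i,t}$ is non-decreasing, Lemma~\ref{lem:bound_of_neighbor_parameters} applies to every index $k$ with $1\le k\le m$, giving $\left\|\vx_{k+1}^t - \vx_k^t\right\| \le \min\left\{\eta_{k,t},\ \eta_{k,t}G\lambda_{min}^{-1/2}\left(\mG_{1,t}\right)\right\}$ after taking square roots.

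Next I would use the hypothesis that all step sizes within the epoch are equal, $\eta_{1,t}=\cdots=\eta_{m,t}=\eta_{.,t}$, so that each of the $i-1$ summands is bounded by the same quantity, hence $\left\|\vx_i^t - \vx_1^t\right\| \le (i-1)\min\left\{\eta_{.,t},\ \eta_{.,t}G\lambda_{min}^{-1/2}\left(\mG_{1,t}\right)\right\}$. Squaring both sides and moving the square inside the minimum (which is legitimate since $x\mapsto x^2$ is monotone on nonnegative reals) yields
\begin{equation*}
    \left\|\vx_i^t - \vx_1^t\right\|^2 \le (i-1)^2\min\left\{\eta_{.,t}^2,\ \eta_{.,t}^2 G^2\lambda_{min}^{-1}\left(\mG_{1,t}\right)\right\},
\end{equation*}
which is exactly \eqref{ineq:bound_of_parameters} after distributing $(i-1)^2$ into each branch of the minimum.

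There is essentially no hard part here: the corollary is a routine consequence of the preceding lemma. The only point that needs a moment's care is verifying that the hypotheses of Lemma~\ref{lem:bound_of_neighbor_parameters}—namely Assumption~\ref{ass:comm_ass} and the monotonicity of $\delta_{i,t}$—are inherited, and that the index range $1\le k\le i-1\le m-1$ stays within the lemma's scope; both are immediate. One could alternatively avoid the triangle inequality and instead telescope the squared quantities via a Cauchy--Schwarz step, $\left\|\sum_{k=1}^{i-1}\va_k\right\|^2 \le (i-1)\sum_{k=1}^{i-1}\|\va_k\|^2$, but since each $\|\va_k\|^2$ has the same bound this also produces the $(i-1)^2$ factor, so the two routes coincide. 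I would present the triangle-inequality version as it is the cleanest.
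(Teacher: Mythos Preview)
Your proposal is correct and follows essentially the same approach as the paper: telescope $\vx_i^t-\vx_1^t$ into single-step increments, apply the triangle inequality, and invoke Lemma~\ref{lem:bound_of_neighbor_parameters} on each increment with the common step size $\eta_{.,t}$. The paper's write-up additionally inserts a Cauchy--Schwarz step $(\sum_j \|\cdot\|)^2 \le (i-1)\sum_j \|\cdot\|^2$ after the triangle inequality, but since every summand carries the same uniform bound this yields the identical $(i-1)^2$ factor you obtain directly by squaring; your route is marginally cleaner and you already noted the equivalence.
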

\begin{proof}
    It can be easily checked that $i=1$ holds in Eq.~\ref{ineq:bound_of_parameters}.
    Therefore, we only need to prove $i\ge 2$.
    We have
    \begin{equation}
        \label{ineq:upper_bound_diff_parameters}
        \begin{split}
            \left\|\vx_i^t - \vx_1^t\right\|^2=\left\|\sum_{j=1}^{i-1}\left(\vx_{j+1}^t - \vx_j^t\right)\right\|^2 \mathop{\le}^{\circled{1}} \left[\sum_{j=1}^{i-1}\left\|\vx_{j+1}^t - \vx_j^t\right\|\right]^2 \mathop{\le}^{\circled{2}} (i-1)\left[\sum_{j=1}^{i-1}\left\|\vx_{j+1}^t - \vx_j^t\right\|^2\right],
        \end{split}
    \end{equation}
    where $\circled{1}$ follows from the triangle inequality and $\circled{2}$ follows from the Cauchy-Schwarz inequality. 
    According to the iteration of \radagrad, for any $j\in \sI_{i-1}$ we have
    \begin{equation}
        \label{ineq:upper_bound_close_parameters}
        \begin{aligned}[b]
            &\left\|\vx_{j+1}^t - \vx_j^t\right\|^2 \mathop{\le}^{\circled{1}} \min\left\{\eta_{.,t}^2, \eta_{.,t}^2G^2\lambda^{-1}_{min}\left(\mG_{1,t}\right)\right\},
        \end{aligned}
    \end{equation}
    where $\circled{1}$ follows from Lemma~\ref{lem:bound_of_neighbor_parameters}.
    Combining Eq.~\ref{ineq:upper_bound_diff_parameters} with Eq.~\ref{ineq:upper_bound_close_parameters}, we obtain
    \begin{equation}
        \label{ineq:bound_of_parameters_final}
        \begin{split}
            \left\|\vx_i^t - \vx_1^t\right\|^2\le \min\left\{\eta_{.,t}^2\lambda_{min}^{-1}\left(\mG_{1,t}\right)(i-1)^2G^2, \eta_{.,t}^2(i-1)^2\right\}.
        \end{split}
    \end{equation}
    Thus, we complete the proof.
\end{proof}
\begin{lemma}
    \label{lem:G_trace_lower_bound}
    In \radagrad, suppose that the Assumption~\ref{ass:comm_ass} hold, and the perturbation $\delta_{i,t}$ is no decreasing. If the step size in the $i$-th epoch satisfies $\eta_{.,i}\le \frac{c_\sigma^2}{16nLG}$, we have $$\sum_{j=1}^m \left\|\grad f_{\sB_j^i}(\vx_j^i)\right\|^2\ge \frac{c_\sigma^2m^2}{16n}$$ with probability at least $1-\exp\left(-\frac{m^2c^4_{\sigma}}{32n^2G^4}\right)$.
\end{lemma}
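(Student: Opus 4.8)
The plan is to split the argument into a deterministic \emph{drift} estimate, which passes from the running iterates $\vx_j^i$ to the epoch's starting point $\vx_1^i$, and a probabilistic estimate on the sum of squared mini-batch gradients evaluated at the \emph{fixed} point $\vx_1^i$ over the randomness of the shuffle.

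\emph{Step 1: reduce to the starting point.} Since $\grad f_{\sB_j^i}$ is an average of $L$-smooth component gradients it is itself $L$-smooth; writing $\grad f_{\sB_j^i}(\vx_j^i)=\grad f_{\sB_j^i}(\vx_1^i)+\big(\grad f_{\sB_j^i}(\vx_j^i)-\grad f_{\sB_j^i}(\vx_1^i)\big)$ and using $\|\va\|^2\ge\|\vb\|^2-2\|\vb\|\|\va-\vb\|$ together with $\|\grad f_{\sB_j^i}(\vx_1^i)\|\le G$ (Assumption~\ref{ass:comm_ass}) and $\|\vx_j^i-\vx_1^i\|\le\eta_{.,i}(j-1)$ (Corollary~\ref{lem:bound_of_parameters}) gives, after summing over $j$,
\begin{equation*}
\sum_{j=1}^m\big\|\grad f_{\sB_j^i}(\vx_j^i)\big\|^2\ \ge\ \sum_{j=1}^m\big\|\grad f_{\sB_j^i}(\vx_1^i)\big\|^2-GL\eta_{.,i}\,m(m-1).
\end{equation*}
The step-size bound $\eta_{.,i}\le c_\sigma^2/(16nLG)$ makes the error term at most $c_\sigma^2 m^2/(16n)$, so it suffices to show $\sum_{j=1}^m\|\grad f_{\sB_j^i}(\vx_1^i)\|^2\ge c_\sigma^2 m^2/(8n)$ with probability at least $1-\exp(-m^2 c_\sigma^4/(32n^2 G^4))$; the leftover $c_\sigma^2 m^2/(16n)$ is exactly the target in the statement.

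\emph{Step 2: the expected value at $\vx_1^i$.} Set $\vg_k=\grad f_k(\vx_1^i)$, $\bar\vg=\grad f(\vx_1^i)$, $\vd_k=\vg_k-\bar\vg$ and $b=n/m$. Expanding $\grad f_{\sB_j^i}(\vx_1^i)=\bar\vg+\tfrac1b\sum_{k\in\sB_j^i}\vd_k$ and discarding the nonnegative $\bar\vg$-terms (their batch-sum vanishes since $\sum_k\vd_k=\vzero$) gives $\sum_j\|\grad f_{\sB_j^i}(\vx_1^i)\|^2\ge b^{-2}\sum_j\big\|\sum_{k\in\sB_j^i}\vd_k\big\|^2$. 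A direct computation over the uniform random partition, using $\mathbb{P}[k,\ell\text{ in the same batch}]=\tfrac{b-1}{n-1}$ and — only after combining the diagonal with the (negatively weighted) off-diagonal contribution — the variance lower bound $\sum_k\|\vd_k\|^2\ge n c_\sigma^2$ (item~3 of Assumption~\ref{ass:comm_ass}), yields $\mathbb{E}\big[\sum_j\|\grad f_{\sB_j^i}(\vx_1^i)\|^2\big]\ge \tfrac{m(m-1)}{n-1}c_\sigma^2\ge \tfrac{m^2 c_\sigma^2}{2n}$ for $m\ge 2$; when $m=n$ (batch size one) the quantity equals $\sum_k\|\vg_k\|^2\ge n c_\sigma^2$ and the whole statement is deterministic.

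\emph{Step 3: concentration, and the main obstacle.} It remains to show $S_0:=\sum_j\|\grad f_{\sB_j^i}(\vx_1^i)\|^2$ stays within $\Theta(m^2 c_\sigma^2/n)$ of its mean except with probability $\exp(-\Theta(m^2 c_\sigma^4/(n^2 G^4)))$. Each summand lies in $[0,G^2]$ by the gradient bound, but they are \emph{not} independent because the batches come from a single random partition, so Lemma~\ref{lem:hoeffding_s_inequality} cannot be invoked verbatim. The route I would take is a bounded-differences / exposure (Doob) martingale argument: reveal the batch label of each of the $n$ instances in turn; a single label changes $S_0$ by only $O(G^2/b)$, so Azuma--Hoeffding gives a tail of the form $\exp\!\big(-t^2/\Theta(nG^4/b^2)\big)$, and substituting $t=\Theta(m^2 c_\sigma^2/n)$ and $b=n/m$ collapses it to the claimed $\exp(-\Theta(m^2 c_\sigma^4/(n^2 G^4)))$, with enough slack in the $\Theta$'s to recover the stated constants. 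This concentration step is the crux: one must handle the dependence induced by random shuffling and, crucially, extract the \emph{small} per-instance sensitivity $O(G^2/b)$ rather than $O(G^2)$, since only then do the powers of $n$ and $m$ in the exponent come out right. Combining Steps~1--3 then proves the lemma.
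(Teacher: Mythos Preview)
Your overall three-step structure---drift from $\vx_j^i$ back to $\vx_1^i$, lower-bound the expectation of $S_0:=\sum_{j}\|\grad f_{\sB_j^i}(\vx_1^i)\|^2$, then concentrate---is exactly what the paper does, and your Steps~1 and~2 match the paper's argument essentially line for line (same cross-term bound $GL\eta\sum_j(j-1)\le c_\sigma^2m^2/(32n)$ after the factor of~2, same sampling-without-replacement variance computation giving $\mathbb{E}[S_0]\ge c_\sigma^2 m^2/(2n)$).

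Where you diverge is Step~3. You correctly observe that the $m$ summands $\|\grad f_{\sB_j^i}(\vx_1^i)\|^2$ are dependent through the random partition, and you propose an exposure/Azuma argument with per-instance sensitivity $O(G^2/b)$. That route can be made to work (and in fact yields a tail of order $\exp(-\Theta(m^2c_\sigma^4/(nG^4)))$, \emph{stronger} in $n$ than what is claimed), but it is considerably more delicate than necessary: you need a martingale construction adapted to random partitions and a careful sensitivity computation. The paper avoids all of this by not decomposing the sum at all. It simply treats $S_0$ as a \emph{single} bounded random variable, $S_0\in[0,mG^2]$, and applies Hoeffding's inequality (Lemma~\ref{lem:hoeffding_s_inequality}) in the degenerate case of one summand---equivalently, Hoeffding's lemma plus Markov---to get
\[
\mathbb{P}\!\left[S_0-\mathbb{E}[S_0]\le -\tfrac{c_\sigma^2 m^2}{8n}\right]\ \le\ \exp\!\left(-\frac{2\,(c_\sigma^2 m^2/8n)^2}{(mG^2)^2}\right)\ =\ \exp\!\left(-\frac{m^2 c_\sigma^4}{32 n^2 G^4}\right).
\]
So your worry that ``Lemma~\ref{lem:hoeffding_s_inequality} cannot be invoked verbatim'' is true for the $m$-term sum but moot: the paper never invokes it on the summands, only on $S_0$ itself, which makes the dependence issue disappear. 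Your approach buys a sharper exponent at the cost of substantially more machinery; the paper's buys a one-line concentration step that lands exactly on the stated constant.
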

\begin{proof}
    To simplify notations in the following proof, we set $s_i=\sum_{j=1}^m \left\|\grad f_{\sB_j^i}(\vx_1^i)\right\|^2$.
    We have
    \begin{equation}
        \label{ineq:si_lower_bound}
        \begin{split}
            \mathbb{E}_{\sigma_i}\left[s_i\right]=\sum_{j=1}^m\left(\mathbb{E}_{\sB_{j}^i}\left[\left\|\grad f_{\sB_j^i}(\vx_1^i)\right\|^2\right]\right).
        \end{split}
    \end{equation}
    With the symmetry of the permutation $\sigma_i$, there is $\mathbb{P}\left[\sB_1^i = \tilde{\sigma}\right]=\mathbb{P}\left[\sB_2^i = \tilde{\sigma}\right]=\ldots =\mathbb{P}\left[\sB_m^i = \tilde{\sigma}\right]$ for any specific subset $\tilde{\sigma} \subseteq \sI_m$ where $\left|\tilde{\sigma}\right| = \left|\sB_j^i\right|, \forall j\in\sI_m$.
    Thus, when the sample size of mini-batch $\left|\tilde{\sigma}\right|=n/m\le (n+1)/2$, the expectation $\mathbb{E}_{\sigma_i}\left[s_i\right]$ can be reformulated as
    \begin{equation}
        \label{ineq:exp_si_lower_bound}
        \begin{aligned}[b]
            &\mathbb{E}_{\sigma_i}\left[s_i\right]=m\mathbb{E}_{\tilde{\sigma}}\left[\left\|\grad f_{\tilde{\sigma}}(\vx_1^i)\right\|^2\right] =m\mathbb{E}_{\tilde{\sigma}}\left[\left\|\grad f_{\tilde{\sigma}}(\vx_1^i)-\mathbb{E}_{\tilde{\sigma}}\left[\grad f_{\tilde{\sigma}}(\vx_1^i)\right]+\mathbb{E}_{\tilde{\sigma}}\left[\grad f_{\tilde{\sigma}}(\vx_1^i)\right]\right\|^2\right]\\
            = & m\left[\mathbb{E}_{\tilde{\sigma}}\left[\left\|\grad f_{\tilde{\sigma}}(\vx_1^i)-\mathbb{E}_{\tilde{\sigma}}\left[\grad f_{\tilde{\sigma}}(\vx_1^i)\right]\right\|^2\right] + \mathbb{E}_{\tilde{\sigma}}\left[\left\|\mathbb{E}_{\tilde{\sigma}}\left[\grad f_{\tilde{\sigma}}(\vx_1^i)\right]\right\|^2\right]\right]\\
            \ge &m\mathbb{E}_{\tilde{\sigma}}\left[\left\|\grad f_{\tilde{\sigma}}(\vx_1^i)-\mathbb{E}_{\tilde{\sigma}}\left[\grad f_{\tilde{\sigma}}(\vx_1^i)\right]\right\|^2\right] \mathop{=}^{\circled{1}} m\cdot \frac{c^2_{\sigma}m}{n} \cdot \left(1-\frac{n/m-1}{n-1}\right) \ge \frac{c^2_{\sigma}m^2}{2n},
        \end{aligned}
    \end{equation}
    where $\circled{1}$ is established because of the property of sampling without replacement variance. 
    Besides, for any $i\in\sI_m$, we have $s_i\in \left[0,mG^2\right]$.
    According to Lemma~\ref{lem:hoeffding_s_inequality}, we have
    \begin{equation}
        \label{ineq:s_i_hoeffding_bound}
        \begin{split}
            \mathbb{P}\left[\rs_i-\frac{c^2_{\sigma}m^2}{4n}\le -\frac{c^2_{\sigma}m^2}{8n}\right] \le \mathbb{P}\left[\rs_i-\mathbb{E}\left[\rs_i\right]\le -\frac{c^2_{\sigma}m^2}{8n}\right]\le \exp\left(-\frac{m^2c^4_{\sigma}}{32n^2G^4}\right).
        \end{split}
    \end{equation}
    That is to say, $s_i\ge \frac{c^2_{\sigma}m^2}{8n}$ establishes with probability at least $1-\exp\left(-\frac{m^2c^4_{\sigma}}{32n^2G^4}\right)$. 
    When the step size in the $i$-th epoch is small enough, i.e., $\eta_{.,i}\le \frac{c_\sigma^2}{16nLG}$, we have
    \begin{equation}
        \label{ineq:minor_term_upper_bound}
        \begin{aligned}[b]
            &\sum_{j=1}^m \left\|\grad f_{\sB_j^i}(\vx_1^i)\right\|\cdot \left\|\grad f_{\sB_j^i}(\vx_j^i) - \grad f_{\sB_j^i}(\vx_1^i)\right\|\\
            \le & G\sum_{j=1}^m \left\|\grad f_{\sB_j^i}(\vx_j^i) - \grad f_{\sB_j^i}(\vx_1^i)\right\|\le LG\sum_{j=1}^m\left\|\vx_j^i - \vx_1^i\right\|\\
            \le & LG\sum_{j=1}^m \sum_{k=1}^{j-1}\left\|\vx_{k+1}^i - \vx_k^i\right\| \mathop{\le}^{\circled{1}} LG \sum_{j=1}^m \sum_{k=1}^{j-1}\eta_{.,i}=LG\eta_{.,i} \sum_{j=1}^m (j-1)\\
            \le & \frac{LG\eta_{.,i}m^2}{2} \le \frac{c_\sigma^2 m^2}{32n},
        \end{aligned}
    \end{equation}
    where $\circled{1}$ follows from Lemma~\ref{lem:bound_of_neighbor_parameters}.
    Thus, we have
    \begin{equation}
        \label{ineq:trace_lower_bound}
        \begin{aligned}[b]
            &\sum_{j=1}^m \left\|\grad f_{\sB_j^i}(\vx_j^i)-\grad f_{\sB_j^i}(\vx_1^i) + \grad f_{\sB_j^i}(\vx_1^i)\right\|^2 \ge \sum_{j=1}^m \left[\left\|\grad f_{\sB_j^i}(\vx_1^i)\right\| - \left\|\grad f_{\sB_j^i}(\vx_j^i)-\grad f_{\sB_j^i}(\vx_1^i)\right\|\right]^2\\
            \ge & \sum_{j=1}^m \left\|\grad f_{\sB_j^i}(\vx_1^i)\right\|^2 - 2\sum_{j=1}^m \left\|\grad f_{\sB_j^i}(\vx_1^j)\right\|\cdot \left\|\grad f_{\sB_j^i}(\vx_j^i) -\grad f_{\sB_j^i}(\vx_1^i)\right\| \mathop{\ge}^{\circled{1}} \frac{c_\sigma^2m^2}{16n}
        \end{aligned}
    \end{equation}
    with probability at least $1-\exp\left(-\frac{m^2c^4_{\sigma}}{32n^2G^4}\right)$, where we have $\circled{1}$ due to Eq.~\ref{ineq:s_i_hoeffding_bound} and Eq.~\ref{ineq:minor_term_upper_bound}. Then, the proof is completed.
\end{proof}

\newpage
\section{Convergence Rate of \radagrad on Non-Convex and Shuffling Settings}
\begin{lemma}
    \label{lem:sufficient_descent}
    In \radagrad, suppose that the Assumption~\ref{ass:comm_ass} and Assumption~\ref{ass:H_full_column_rank_ass} hold, If the hyper-parameter $\delta_{j,i}$, $\Gamma$ and the step size $\eta$ satisfy
    \begin{equation}
        \label{eq:final_step_size_selection_ac_0}
        \begin{aligned}[b]
            \delta_{j,i} = \sum_{p=1}^{i-1}\sum_{q=1}^m \left\|\grad f_{\sB_q^p}(\vx_q^p)\right\|^2 + \sum_{q=1}^j \left\|\grad f_{\sB_q^i}(\vx_q^i)\right\|^2, \forall\ i\in\sI_T, j\in\sI_m,\quad 1\le \Gamma\le n \quad \mathrm{and}\quad \eta \le \frac{c_\sigma^2}{16nLG}.
        \end{aligned}
    \end{equation}
    Then, we have
    \begin{equation}
        \label{eq:sufficient_descent}
        \begin{aligned}[b]
            &\frac{\eta_{.,t}}{4m} \left(\sum_{i=1}^m \grad f_{\sB_i^t}(\vx_i^t)\right)^\top\mG_{m,t}^{-\frac{1}{2}}\left(\sum_{i=1}^m \grad f_{\sB_i^t}(\vx_i^t)\right)\\
            \le &f(\vx_1^t) - f(\vx_{m+1}^t) + \min\left\{\frac{2\eta_{.,t}^3L^2G^2m^3}{3\lambda_{min}^{1.5}\left(\mG_{1,t}\right)}, \frac{2\eta^3_{.,t}L^2m^3}{3\sqrt{\lambda_{min}\left(\mG_{m,t}\right)}}\right\}+ \min\left\{\frac{3\eta_{.,t}m^{1.5}G^3}{\sqrt{2}\lambda_{min}\left(\mG_{1,t}\right)}, \frac{3\eta_{.,t}m^{1.5}G}{\sqrt{2}}\right\},
        \end{aligned}
    \end{equation}   
    for the $t-$th epoch in \radagrad.
\end{lemma}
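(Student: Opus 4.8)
Write $\vg_i \coloneqq \grad f_{\sB_i^t}(\vx_i^t)$, so that $\sum_{i=1}^m \vg_i = m\vs_t$ and $\vx_{m+1}^t - \vx_1^t = -\eta_{.,t}\sum_{i=1}^m \mG_{i,t}^{-1/2}\vg_i$. The plan is to start from the single-epoch descent inequality obtained by applying the $L$-smoothness of $f$ (item~2 of Assumption~\ref{ass:comm_ass}) between $\vx_1^t$ and $\vx_{m+1}^t$ and substituting the update rule,
\begin{equation*}
    f(\vx_1^t) - f(\vx_{m+1}^t) \;\ge\; \eta_{.,t}\,\grad f(\vx_1^t)^{\top}\!\sum_{i=1}^m \mG_{i,t}^{-1/2}\vg_i \;-\; \tfrac{L}{2}\bigl\|\vx_{m+1}^t - \vx_1^t\bigr\|^2 ,
\end{equation*}
and then to rewrite the first-order term so that the quadratic form $\vs_t^{\top}\mG_{m,t}^{-1/2}\vs_t$ emerges as the dominant descent while every discrepancy is absorbed either into a geometric slack or into the two $\min\{\cdot,\cdot\}$ terms of \eqref{eq:sufficient_descent}. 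The term $\tfrac{L}{2}\|\vx_{m+1}^t - \vx_1^t\|^2$ is bounded by the $i=m+1$ analogue of Corollary~\ref{lem:bound_of_parameters}, and the restriction $\eta_{.,t}\le c_\sigma^2/(16nLG)$ (with $m\le n$ and $c_\sigma\le G$) lets it be absorbed into the linear $\min$ term. For the first-order term I would make two substitutions: \emph{(i)} replace $\grad f(\vx_1^t) = \tfrac{1}{m}\sum_j \grad f_{\sB_j^t}(\vx_1^t)$ (the mini-batches partition $\sI_n$ into equal pieces) by $\tfrac{1}{m}\sum_j \vg_j$, controlling the error through $\|\grad f_{\sB_j^t}(\vx_1^t) - \vg_j\| \le L\|\vx_j^t - \vx_1^t\|$ and the displacement bound $\|\vx_j^t - \vx_1^t\| \le \eta_{.,t}(j-1)\min\{1,\, G\lambda_{\mathrm{min}}^{-1/2}(\mG_{1,t})\}$ of Corollary~\ref{lem:bound_of_parameters}; and \emph{(ii)} replace each $\mG_{i,t}^{-1/2}$ by $\mG_{m,t}^{-1/2}$. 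For (ii), Lemma~\ref{lem:positive_definite_property_in_one_epoch} applies since $0 \le \delta_{i+1,t} - \delta_{i,t} = \|\vg_{i+1}\|^2 \le G^2$ by the choice \eqref{eq:final_step_size_selection_ac_0} of $\delta$ and the gradient bound, and since $\Gamma \ge 1$; it yields $\mathbf{0} \preceq \mG_{m,t}^{1/2} - \mG_{i,t}^{1/2} \preceq \sqrt{2m}\,G\,\mI$, so the resolvent identity $\mG_{i,t}^{-1/2} - \mG_{m,t}^{-1/2} = \mG_{i,t}^{-1/2}(\mG_{m,t}^{1/2} - \mG_{i,t}^{1/2})\mG_{m,t}^{-1/2}$, together with $\|\mG_{i,t}^{-1/2}\vg_i\| = \|\vx_{i+1}^t - \vx_i^t\|/\eta_{.,t} \le \min\{1,\, G\lambda_{\mathrm{min}}^{-1/2}(\mG_{1,t})\}$ from Lemma~\ref{lem:bound_of_neighbor_parameters} and $\|\mG_{m,t}^{-1/2}\sum_i \vg_i\| \le \sqrt{m}$ (which follows from $\mG_{m,t} \succeq \mH_{m,t}\mH_{m,t}^{\top}$, the fact that $\sum_i\vg_i$ lies in the range of $\mH_{m,t}$, and the full column rank of $\mH_{m,t}$ from Assumption~\ref{ass:H_full_column_rank_ass}), controls the error of (ii) as well.

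After the two substitutions the surviving dominant term is $\tfrac{\eta_{.,t}}{m}\bigl(\sum_i \vg_i\bigr)^{\top}\mG_{m,t}^{-1/2}\bigl(\sum_i \vg_i\bigr) = \eta_{.,t}m\,\vs_t^{\top}\mG_{m,t}^{-1/2}\vs_t \ge 0$. I would keep exactly one quarter of it---which equals the left-hand side of \eqref{eq:sufficient_descent}---and use the remaining three quarters as slack. The dangerous cross term $\eta_{.,t}\,\vw^{\top}\mG_{m,t}^{-1/2}\vs_t$ with $\vw \coloneqq m\bigl(\grad f(\vx_1^t) - \vs_t\bigr)$ is split by Cauchy--Schwarz and Young's inequality in the $\mG_{m,t}^{-1/2}$ metric: its part proportional to $\vs_t^{\top}\mG_{m,t}^{-1/2}\vs_t$ is absorbed by the slack (the step-size restriction keeps the accumulated relative error below $\tfrac{3}{4}$), while its residual, bounded through $\vw^{\top}\mG_{m,t}^{-1/2}\vw \le \lambda_{\mathrm{min}}^{-1/2}(\mG_{m,t})\|\vw\|^2$ and $\|\vw\| \le \tfrac{L\eta_{.,t}m^2}{2}\min\{1,\, G\lambda_{\mathrm{min}}^{-1/2}(\mG_{1,t})\}$ (from the displacement bound and $\sum_j(j-1)\le m^2/2$), together with $\lambda_{\mathrm{min}}(\mG_{m,t}) \ge \lambda_{\mathrm{min}}(\mG_{1,t})$, gives the cubic term $\min\{\tfrac{2}{3}\eta_{.,t}^3 L^2 G^2 m^3 \lambda_{\mathrm{min}}^{-3/2}(\mG_{1,t}),\, \tfrac{2}{3}\eta_{.,t}^3 L^2 m^3 \lambda_{\mathrm{min}}^{-1/2}(\mG_{m,t})\}$. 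The remaining pieces---the substitution-(ii) error (bounded by $\|\sum_i(\mG_{i,t}^{-1/2} - \mG_{m,t}^{-1/2})\vg_i\| \le \sqrt{2}\,m^{3/2}G^2\lambda_{\mathrm{min}}^{-1}(\mG_{1,t})$, or by $m^{3/2}$ if one instead uses $\|\mG_{i,t}^{-1/2}\vg_i\|,\,\|\mG_{m,t}^{-1/2}\vg_i\| \le 1$) and $\tfrac{L}{2}\|\vx_{m+1}^t - \vx_1^t\|^2$---assemble into the linear term $\min\{\tfrac{3}{\sqrt{2}}\eta_{.,t}m^{3/2}G^3\lambda_{\mathrm{min}}^{-1}(\mG_{1,t}),\, \tfrac{3}{\sqrt{2}}\eta_{.,t}m^{3/2}G\}$; in each $\min$ the two branches are exactly the two alternative displacement/resolvent bounds above (one keeping $\lambda_{\mathrm{min}}(\mG_{1,t})$, the other $\lambda_{\mathrm{min}}(\mG_{m,t})$ or a rank bound).

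The main obstacle is this last step: it is pure but delicate bookkeeping. For each cross term one must separate the ``relative'' part---a multiple of $\vs_t^{\top}\mG_{m,t}^{-1/2}\vs_t$, absorbable by the $\tfrac{3}{4}$ slack---from the ``absolute'' part, and then verify that every absolute contribution carries exactly the advertised powers of $\eta_{.,t}$, $L$, $G$, $m$ and the correct choice among $\lambda_{\mathrm{min}}(\mG_{1,t})$ and $\lambda_{\mathrm{min}}(\mG_{m,t})$, so that the sum regroups precisely into the two $\min\{\cdot,\cdot\}$ terms of \eqref{eq:sufficient_descent} with the stated constants $\tfrac{2}{3}$ and $\tfrac{3}{\sqrt{2}}$; keeping track of which inequalities are tight is what pins those constants down. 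Two supporting facts also need checking: that the proof of Corollary~\ref{lem:bound_of_parameters} extends verbatim to $i=m+1$, and that $\mH_{m,t}^{\top}(\mH_{m,t}\mH_{m,t}^{\top})^{+}\mH_{m,t} = \mI$ under Assumption~\ref{ass:H_full_column_rank_ass}, on which the $\sqrt m$ bound rests.
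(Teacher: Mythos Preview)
Your plan is sound and follows the same skeleton as the paper: apply $L$-smoothness over one epoch, isolate the quadratic form $\tfrac{\eta_{.,t}}{m}(\sum_i\vg_i)^\top\mG_{m,t}^{-1/2}(\sum_i\vg_i)$, and show that the gradient-shift and matrix-shift discrepancies consume at most three quarters of it while their residuals regroup into the two $\min\{\cdot,\cdot\}$ terms. The genuine difference is in the splitting mechanics, and the paper's choice directly dissolves the ``delicate bookkeeping'' you flag as the main obstacle.

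Where you use the resolvent identity and operator-norm bounds, the paper inserts $\mG_{m,t}^{-1/4}\mG_{m,t}^{1/4}$ before every Cauchy--Schwarz step, so each cross term becomes $\langle \mG_{m,t}^{-1/4}\va,\mG_{m,t}^{1/4}\vb\rangle\le \|\mG_{m,t}^{-1/4}\va\|^2+\tfrac14\|\mG_{m,t}^{1/4}\vb\|^2$. This keeps every residual as a quadratic form in the $\mG_{m,t}^{\pm 1/2}$-metric and makes the $\tfrac14$ on the left-hand side fall out automatically: exactly three such applications (to the two halves of the matrix-shift term and to the gradient-shift term) each cost $\tfrac{\eta_{.,t}}{4m}$ of the main descent. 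The matrix-shift residual then reads $\sum_i\vg_i^\top(\mG_{m,t}^{-1/2}-\mG_{i,t}^{-1/2})\mG_{m,t}^{1/2}(\mG_{m,t}^{-1/2}-\mG_{i,t}^{-1/2})\vg_i$; expanding, using $\mG_{m,t}^{-1/2}\preceq\mG_{i,t}^{-1/2}$ and Lemma~\ref{lem:positive_definite_property_in_one_epoch} collapses it to $\sqrt{2m}\,G\,\vg_i^\top\mG_{i,t}^{-1}\vg_i$, and Lemma~\ref{lem:bound_of_neighbor_parameters} gives $\vg_i^\top\mG_{i,t}^{-1}\vg_i\le\min\{1,G^2\lambda_{\min}^{-1}(\mG_{1,t})\}$ directly---this is where the $\tfrac{3}{\sqrt2}m^{3/2}$ constant comes from with no further case analysis. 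The paper also handles $T_3=\tfrac{L}{2}\|\vx_{m+1}^t-\vx_1^t\|^2$ differently from you: rather than absorbing it into the linear term, it writes $\sum_i\mG_{i,t}^{-1/2}\vg_i=\mG_{m,t}^{-1/2}\sum_i\vg_i+(\text{matrix-shift})$ and uses the step-size bound together with $\lambda_{\min}(\mG_{m,t})\ge(\delta_{m,t}-\delta_{m,t-1})/\Gamma$ (this is where Lemma~\ref{lem:G_trace_lower_bound} and the restriction $\Gamma\le n$ enter) to get $L\eta_{.,t}^2\mG_{m,t}^{-1}\preceq\tfrac{\eta_{.,t}}{4m}\mG_{m,t}^{-1/2}$, absorbing the leading piece of $T_3$ into the descent as the third $\tfrac14$. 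Your direct absorption of $T_3$ into the linear term via $\eta_{.,t}\le c_\sigma^2/(16nLG)$ also works and is simpler. Finally, your bound $\|\mG_{m,t}^{-1/2}\sum_i\vg_i\|\le\sqrt{m}$ is correct but does not actually need Assumption~\ref{ass:H_full_column_rank_ass}: since $\mG_{m,t}\succeq\mH_{m,t}\mH_{m,t}^\top$ one has $\|\mG_{m,t}^{-1/2}\mH_{m,t}\|_{\mathrm{op}}\le 1$, and $\sum_i\vg_i=\mH_{m,t}\mathbf{1}_m$. The paper never needs this bound because it stays in the weighted metric throughout.
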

\begin{proof}
    According to the iteration of \radagrad, when $1\le i \le j \le m$, the outer product matrix $\mG$s have the following properties
    \begin{equation}
        \label{ineq:matrix_G_properties}
        \begin{aligned}[b]
            \mG_{j,t}\succeq \mG_{i,t} \mathop{\Rightarrow}^{\circled{1}} \mG_{j,t}^{\frac{1}{2}} \succeq \mG_{i,t}^{\frac{1}{2}} \mathop{\Rightarrow}^{\circled{2}} \mG_{i,t}^{-\frac{1}{2}} \succeq \mG_{m,t}^{-\frac{1}{2}} ,
        \end{aligned}
    \end{equation}
    where $\circled{1}$ follows from Lemma~\ref{lem:matrix_root_positive_definite_lemma}, $\circled{2}$ follows from Lemma~\ref{lem:matrix_inverse_positive_definite_lemma}.

    With the $L-$Lipschitz continuous gradient assumption, the second item in Assumption~\ref{ass:comm_ass}, we have
    \begin{equation}
        \label{ineq:l_lipschitz_in_loss_function_1}
        \begin{aligned}[b]
            f(\vx_{m+1}^t) - f(\vx_1^t) \le& \grad f^\top(\vx_1^t)\left(\vx_{m+1}^t - \vx_{1}^t\right) + \frac{L}{2}\left\|\vx_{m+1}^t-\vx_1^t \right\|^2\\
            =&\grad f^\top(\vx_1^t)\left[\sum_{i=1}^m -\eta_{i,t}\mG_{i,t}^{-\frac{1}{2}}\grad f_{\sB_i^t}(\vx_i^t)\right] + \frac{L}{2}\left\|\vx_{m+1}^t-\vx_1^t \right\|^2\\
            =&\grad f^\top(\vx_1^t)\left[\sum_{i=1}^m \left(-\eta_{i,t}\mG_{i,t}^{-\frac{1}{2}}\grad f_{\sB_i^t}(\vx_i^t) + \eta_{i,t}\mG_{m,t}^{-\frac{1}{2}}\grad f_{\sB_i^t}(\vx_i^t)\right)\right] + \frac{L}{2}\left\|\vx_{m+1}^t-\vx_1^t \right\|^2\\
            &-\sum_{i=1}^m \eta_{i,t}\grad f^\top(\vx_1^t)\mG_{m,t}^{-\frac{1}{2}}\grad f_{\sB_i^t}(\vx_i^t).
        \end{aligned}
    \end{equation}
    We set the step sizes of different iterations to be the same in one epoch, i.e., $\eta_{1,t} = \eta_{2,t} = \ldots = \eta_{m,t}  = \eta_{.,t}$. Then, we obtain
    \begin{equation}
        \label{ineq:l_lipschitz_in_loss_function_2}
        \begin{aligned}[b]
            f(\vx_{m+1}^t) - f(\vx_1^t) \le& \eta_{.,t}\grad f^\top(\vx_1^t)\left[\sum_{i=1}^m\left(\mG_{m,t}^{-\frac{1}{2}}\grad f_{\sB_i^t}(\vx_i^t)-\mG_{i,t}^{-\frac{1}{2}}\grad f_{\sB_i^t}(\vx_i^t)\right)\right]+ \frac{L}{2}\left\|\vx_{m+1}^t-\vx_1^t \right\|^2\\
            & -\eta_{.,t}\left(\grad f(\vx_1^t) - \frac{1}{m}\sum_{i=1}^m\grad f_{\sB_i^t}(\vx_i^t)+\frac{1}{m}\sum_{i=1}^m\grad f_{\sB_i^t}(\vx_i^t)\right)^\top\mG_{m,t}^{-\frac{1}{2}}\left(\sum_{i=1}^m \grad f_{\sB_i^t}(\vx_i^t)\right)\\
            =&\underbrace{\eta_{.,t}\grad f^\top(\vx_1^t)\left[\sum_{i=1}^m\left(\mG_{m,t}^{-\frac{1}{2}}-\mG_{i,t}^{-\frac{1}{2}}\right)\grad f_{\sB_i^t}(\vx_i^t)\right]}_{T_1}+ \frac{L}{2}\left\|\vx_{m+1}^t-\vx_1^t \right\|^2\\
            &+\underbrace{\frac{\eta_{.,t}}{m}\left[\sum_{i=1}^m\left(\grad f_{\sB_i^t}(\vx_i^t)-\grad f_{\sB_i^t}(\vx_1^t)\right)\right]^\top\mG_{m,t}^{-\frac{1}{2}}\left(\sum_{i=1}^m \grad f_{\sB_i^t}(\vx_i^t)\right)}_{T_2}\\
            &-\frac{\eta_{.,t}}{m}\left(\sum_{i=1}^m \grad f_{\sB_i^t}(\vx_i^t)\right)^\top\mG_{m,t}^{-\frac{1}{2}}\left(\sum_{i=1}^m \grad f_{\sB_i^t}(\vx_i^t)\right)
        \end{aligned}
    \end{equation}
    We next bound $T_1$ and $T_2$ separately. 
    First for $T_1$ in Eq.~\ref{ineq:l_lipschitz_in_loss_function_2}, we have
    \begin{equation}
        \label{ineq:T1_of_LLF_2}
        \begin{aligned}[b]
        T_1 = & \eta_{.,t}\left(\grad f(\vx_1^t) - \frac{1}{m}\sum_{i=1}^m\grad f_{\sB_i^t}(\vx_i^t)+ \frac{1}{m}\sum_{i=1}^m\grad f_{\sB_i^t}(\vx_i^t)\right)^\top\left[\sum_{i=1}^m\left(\mG_{m,t}^{-\frac{1}{2}}-\mG_{i,t}^{-\frac{1}{2}}\right)\grad f_{\sB_i^t}(\vx_i^t)\right]\\
        =& \underbrace{\eta_{.,t}\left(\grad f(\vx_1^t) - \frac{1}{m}\sum_{i=1}^m\grad f_{\sB_i^t}(\vx_i^t)\right)^\top\left[\sum_{i=1}^m\left(\mG_{m,t}^{-\frac{1}{2}}-\mG_{i,t}^{-\frac{1}{2}}\right)\grad f_{\sB_i^t}(\vx_i^t)\right]}_{T_{1.1}}\\
        &+\underbrace{\frac{\eta_{.,t}}{m}\left(\sum_{i=1}^m\grad f_{\sB_i^t}(\vx_i^t)\right)^\top\left[\sum_{i=1}^m\left(\mG_{m,t}^{-\frac{1}{2}}-\mG_{i,t}^{-\frac{1}{2}}\right)\grad f_{\sB_i^t}(\vx_i^t)\right]}_{T_{1.2}}.
        \end{aligned}
    \end{equation}
    For $T_{1.1}$ we have
    \begin{equation}
        \label{ineq:T11_of_LLF_2}
        \begin{aligned}[b]
            T_{1.1}\mathop{=}^{\circled{1}}&\eta_{.,t}\left(\grad f(\vx_1^t) - \frac{1}{m}\sum_{i=1}^m\grad f_{\sB_i^t}(\vx_i^t)\right)^\top\mG_{m,t}^{-\frac{1}{4}}\mG_{m,t}^{\frac{1}{4}}\left[\sum_{i=1}^m\left(\mG_{m,t}^{-\frac{1}{2}}-\mG_{i,t}^{-\frac{1}{2}}\right)\grad f_{\sB_i^t}(\vx_i^t)\right]\\
            \mathop{\le}^{\circled{2}} & \frac{\eta_{.,t}}{m}\left[\sum_{i=1}^m\left(\grad f_{\sB_i^t}(\vx_i^t)-\grad f_{\sB_i^t}(\vx_1^t)\right)\right]^\top\mG_{m,t}^{-\frac{1}{2}}\left[\sum_{i=1}^m\left(\grad f_{\sB_i^t}(\vx_i^t)-\grad f_{\sB_i^t}(\vx_1^t)\right)\right]\\
            &+ \frac{\eta_{.,t}}{4m} \left[\sum_{i=1}^m\left(\mG_{m,t}^{-\frac{1}{2}}-\mG_{i,t}^{-\frac{1}{2}}\right)\grad f_{\sB_i^t}(\vx_i^t)\right]^\top\mG_{m,t}^{\frac{1}{2}}\left[\sum_{i=1}^m\left(\mG_{m,t}^{-\frac{1}{2}}-\mG_{i,t}^{-\frac{1}{2}}\right)\grad f_{\sB_i^t}(\vx_i^t)\right],
        \end{aligned}
    \end{equation}
    where $\circled{1}$ follows from the invertibility of $\mG_{m,t}$ and $\circled{2}$ follows from the Cauchy-Schwarz inequality.
    
    Similarly, for $T_{1.2}$ we have
    \begin{equation}
        \label{ineq:T12_of_LLF_2}
        \begin{aligned}[b]
            T_{1.2}=& \frac{\eta_{.,t}}{m}\left(\sum_{i=1}^t\grad f_{\sB_i^t}(\vx_i^t)\right)^\top\mG_{m,t}^{-\frac{1}{4}}\mG_{m,t}^{\frac{1}{4}}\left[\sum_{i=1}^m\left(\mG_{m,t}^{-\frac{1}{2}}-\mG_{i,t}^{-\frac{1}{2}}\right)\grad f_{\sB_i^t}(\vx_i^t)\right]\\
            \le & \eta_{.,t}\left[\frac{1}{4m}\left(\sum_{i=1}^m\grad f_{\sB_i^t}(\vx_i^t)\right)^\top\mG_{m,t}^{-\frac{1}{2}}\left(\sum_{i=1}^m\grad f_{\sB_i^t}(\vx_i^t)\right)\right.\\
            &\left.+\frac{1}{m}\left[\sum_{i=1}^m\left(\mG_{m,t}^{-\frac{1}{2}}-\mG_{i,t}^{-\frac{1}{2}}\right)\grad f_{\sB_i^t}(\vx_i^t)\right]^\top \mG_{m,t}^{\frac{1}{2}}\left[\sum_{i=1}^m\left(\mG_{m,t}^{-\frac{1}{2}}-\mG_{i,t}^{-\frac{1}{2}}\right)\grad f_{\sB_i^t}(\vx_i^t)\right]\right].
        \end{aligned}
    \end{equation}
    Submitting Eq.~\ref{ineq:T11_of_LLF_2} and Eq.~\ref{ineq:T12_of_LLF_2} backinto Eq.~\ref{ineq:T1_of_LLF_2}, we obtain
    \begin{equation}
        \label{ineq:T1_of_LLF_2_final}
        \begin{aligned}[b]
            T_1 \le &\frac{\eta_{.,t}}{m}\left[\sum_{i=1}^m\left(\grad f_{\sB_i^t}(\vx_i^t)-\grad f_{\sB_i^t}(\vx_1^t)\right)\right]^\top\mG_{m,t}^{-\frac{1}{2}}\left[\sum_{i=1}^m\left(\grad f_{\sB_i^t}(\vx_i^t)-\grad f_{\sB_i^t}(\vx_1^t)\right)\right]\\
            & + \frac{5\eta_{.,t}}{4m}\left[\sum_{i=1}^m\left(\mG_{m,t}^{-\frac{1}{2}}-\mG_{i,t}^{-\frac{1}{2}}\right)\grad f_{\sB_i^t}(\vx_i^t)\right]^\top \mG_{m,t}^{\frac{1}{2}}\left[\sum_{i=1}^m\left(\mG_{m,t}^{-\frac{1}{2}}-\mG_{i,t}^{-\frac{1}{2}}\right)\grad f_{\sB_i^t}(\vx_i^t)\right]\\
            & + \frac{\eta_{.,t}}{4m}\left(\sum_{i=1}^m\grad f_{\sB_i^t}(\vx_i^t)\right)^\top\mG_{m,t}^{-\frac{1}{2}}\left(\sum_{i=1}^m\grad f_{\sB_i^t}(\vx_i^t)\right).
        \end{aligned}
    \end{equation}

    Similarly, $T_2$ in Eq.~\ref{ineq:l_lipschitz_in_loss_function_2} satisfies
    \begin{equation}
        \label{ineq:T2_of_LLF_2}
        \begin{aligned}[b]
            T_2 = &\frac{\eta_{.,t}}{m}\left[\sum_{i=1}^m\left(\grad f_{\sB_i^t}(\vx_i^t)-\grad f_{\sB_i^t}(\vx_1^t)\right)\right]^\top\mG_{m,t}^{-\frac{1}{4}}\mG_{m,t}^{-\frac{1}{4}}\left(\sum_{i=1}^m \grad f_{\sB_i^t}(\vx_i^t)\right)\\
            \le &\frac{\eta_{.,t}}{m}\left[\sum_{i=1}^m\left(\grad f_{\sB_i^t}(\vx_i^t)-\grad f_{\sB_i^t}(\vx_1^t)\right)\right]^\top\mG_{m,t}^{-\frac{1}{2}}\left[\sum_{i=1}^m\left(\grad f_{\sB_i^t}(\vx_i^t)-\grad f_{\sB_i^t}(\vx_1^t)\right)\right]\\
            &+\frac{\eta_{.,t}}{4m} \left(\sum_{i=1}^m \grad f_{\sB_i^t}(\vx_i^t)\right)^\top\mG_{m,t}^{-\frac{1}{2}}\left(\sum_{i=1}^m \grad f_{\sB_i^t}(\vx_i^t)\right).
        \end{aligned}
    \end{equation}
    
    As a result, we plug Eq.~\ref{ineq:T1_of_LLF_2_final} and Eq.~\ref{ineq:T2_of_LLF_2} into Eq.~\ref{ineq:l_lipschitz_in_loss_function_2}, and obtain that
    \begin{equation}
        \label{ineq:sufficient_descent_start}
        \begin{aligned}[b]
            f(\vx_{m+1}^t) - f(\vx_1^t)\le &\underbrace{\frac{2\eta_{.,t}}{m}\left[\sum_{i=1}^m\left(\grad f_{\sB_i^t}(\vx_i^t)-\grad f_{\sB_i^t}(\vx_1^t)\right)\right]^\top\mG_{m,t}^{-\frac{1}{2}}\left[\sum_{i=1}^m\left(\grad f_{\sB_i^t}(\vx_i^t)-\grad f_{\sB_i^t}(\vx_1^t)\right)\right]}_{T_1}\\
            &+ \underbrace{\frac{5\eta_{.,t}}{4m}\left[\sum_{i=1}^m\left(\mG_{m,t}^{-\frac{1}{2}}-\mG_{i,t}^{-\frac{1}{2}}\right)\grad f_{\sB_i^t}(\vx_i^t)\right]^\top \mG_{m,t}^{\frac{1}{2}}\left[\sum_{i=1}^m\left(\mG_{m,t}^{-\frac{1}{2}}-\mG_{i,t}^{-\frac{1}{2}}\right)\grad f_{\sB_i^t}(\vx_i^t)\right]}_{T_2}\\
            &-\frac{\eta_{.,t}}{2m} \left(\sum_{i=1}^m \grad f_{\sB_i^t}(\vx_i^t)\right)^\top\mG_{m,t}^{-\frac{1}{2}}\left(\sum_{i=1}^m \grad f_{\sB_i^t}(\vx_i^t)\right)+ \underbrace{\frac{L}{2}\left\|\vx_{m+1}^t-\vx_1^t \right\|^2}_{T_3}.
        \end{aligned}
    \end{equation}

    For $T_1$ in Eq.~\ref{ineq:sufficient_descent_start}, we have
    \begin{equation}
        \label{ineq:T1_for_SD}
        \begin{aligned}[b]
            T_1 =&\frac{2\eta_{.,t}}{m}\left\|\sum_{i=1}^m \mG_{m,t}^{-\frac{1}{4}}\left(\grad f_{\sB_i^t}(\vx_i^t) - \grad f_{\sB_i^t}(\vx_1^t)\right)\right\|^2\\
            \le &2\eta_{.,t}\sum_{i=1}^m\left[\left(\grad f_{\sB_i^t}(\vx_i^t) - \grad f_{\sB_i^t}(\vx_1^t)\right)^T\mG_{m,t}^{-\frac{1}{2}}\left(\grad f_{\sB_i^t}(\vx_i^t) - \grad f_{\sB_i^t}(\vx_1^t)\right)\right]\\
            \le & \frac{2\eta_{.,t}}{\sqrt{\lambda_{min}\left(\mG_{m,t}\right)}}\sum_{i=1}^m\left\|\grad f_{\sB_i^t}(\vx_i^t) - \grad f_{\sB_i^t}(\vx_1^t)\right\|^2\\
            \mathop{\le}^{\circled{1}} & \frac{2\eta_{.,t}L^2}{\sqrt{\lambda_{min}\left(\mG_{m,t}\right)}}\sum_{i=1}^m\left\|\vx_i^t - \vx_1^t\right\|^2 \mathop{\le}^{\circled{2}} \frac{2\eta_{.,t}L^2}{\sqrt{\lambda_{min}\left(\mG_{m,t}\right)}}\sum_{i=1}^m \left(\min\left\{\lambda_{min}^{-1}\left(\mG_{1,t}\right)G^2, 1\right\}\cdot \eta_{.,t}^2(i-1)^2\right) \\
            = & \frac{2\eta^3_{.,t}L^2}{\sqrt{\lambda_{min}\left(\mG_{m,t}\right)}}\cdot \min\left\{\lambda_{min}^{-1}\left(\mG_{1,t}\right)G^2, 1\right\} \cdot \sum_{i=1}^m\left(i-1\right)^2\le \min\left\{\frac{2\eta_{.,t}^3L^2G^2m^3}{3\lambda_{min}^{1.5}\left(\mG_{1,t}\right)}, \frac{2\eta^3_{.,t}L^2m^3}{3\sqrt{\lambda_{min}\left(\mG_{m,t}\right)}}\right\}\\ 
        \end{aligned}
    \end{equation}
    where $\circled{1}$ follows from the $L$-Lipschitz continuous gradient assumption, the second item in Assumption~\ref{ass:comm_ass}, and $\circled{2}$ follows from Corollary.~\ref{lem:bound_of_parameters}.

    With similar techniques, we relax $T_2$ in Eq.~\ref{ineq:sufficient_descent_start} as follows
    \begin{equation}
        \label{ineq:T2_for_SD}
        \begin{aligned}[b]
            T_2 = &\frac{5\eta_{.,t}}{4m}\left\|\sum_{i=1}^m\mG_{m,t}^{\frac{1}{4}}\left(\mG_{m,t}^{-\frac{1}{2}}-\mG_{i,t}^{-\frac{1}{2}}\right)\grad f_{\sB_i^t}(\vx_i^t)\right\|^2\\
            \mathop{\le}^{\circled{1}} & \frac{5\eta_{.,t}}{4}\sum_{i=1}^m\left\|\mG_{m,t}^{\frac{1}{4}}\left(\mG_{m,t}^{-\frac{1}{2}}-\mG_{i,t}^{-\frac{1}{2}}\right)\grad f_{\sB_i^t}(\vx_i^t)\right\|^2\\
            = & \frac{5\eta_{.,t}}{4}\sum_{i=1}^m  \grad f^\top_{\sB_i^t}(\vx_i^t) \left(\mG_{m,t}^{-\frac{1}{2}}-\mG_{i,t}^{-\frac{1}{2}}\right)\mG_{m,t}^{\frac{1}{2}}\left(\mG_{m,t}^{-\frac{1}{2}}-\mG_{i,t}^{-\frac{1}{2}}\right)\grad f_{\sB_i^t}(\vx_i^t),
        \end{aligned}
    \end{equation}
    where $\circled{1}$ follows from the Cauchy-Schwarz inequality.
    For each $i$ in the last equation of Eq.~\ref{ineq:T2_for_SD}, we have
    \begin{equation}
        \label{ineq:T2_for_SD_component1}
        \begin{aligned}[b]
            &\grad f^\top_{\sB_i^t}(\vx_i^t) \left(\mG_{m,t}^{-\frac{1}{2}}-\mG_{i,t}^{-\frac{1}{2}}\right)\mG_{m,t}^{\frac{1}{2}}\left(\mG_{m,t}^{-\frac{1}{2}}-\mG_{i,t}^{-\frac{1}{2}}\right)\grad f_{\sB_i^t}(\vx_i^t)\\
            = & \grad f^\top_{\sB_i^t}(\vx_i^t)\left(\mG_{m,t}^{-\frac{1}{2}}-2\mG_{i,t}^{-\frac{1}{2}} + \mG_{i,t}^{-\frac{1}{2}}\mG_{m,t}^{\frac{1}{2}}\mG_{i,t}^{-\frac{1}{2}}\right)\grad f_{\sB_i^t}(\vx_i^t)\\
            \mathop{\le}^{\circled{1}} & \grad f^\top_{\sB_i^t}(\vx_i^t)\left(\mG_{i,t}^{-\frac{1}{2}}\mG_{m,t}^{\frac{1}{2}}\mG_{i,t}^{-\frac{1}{2}}-\mG_{i,t}^{-\frac{1}{2}}\right)\grad f_{\sB_i^t}(\vx_i^t) = \grad f^\top_{\sB_i^t}(\vx_i^t)\mG_{i,t}^{-\frac{1}{2}}\left(\mG_{m,t}^{\frac{1}{2}}-\mG_{i,t}^{\frac{1}{2}}\right)\mG_{i,t}^{-\frac{1}{2}}\grad f_{\sB_i^t}(\vx_i^t)\\
            \mathop{\le}^{\circled{2}} &\sqrt{2m}G \grad f^\top_{\sB_i^t}(\vx_i^t)\mG_{i,t}^{-1}\grad f_{\sB_i^t}(\vx_i^t) \mathop{\le}^{\circled{3}} \min\left\{\frac{\sqrt{2m}G^3}{\lambda_{min}\left(\mG_{i,t}\right)}, \sqrt{2m}G\right\} \le \min\left\{\frac{\sqrt{2m}G^3}{\lambda_{min}\left(\mG_{1,t}\right)},\sqrt{2m}G\right\}
        \end{aligned}
    \end{equation}
    where $\circled{1}$ follows from $\mG^{-\frac{1}{2}}_{m,t} - \mG^{-\frac{1}{2}}_{i,t}\preceq 0$ stated in Eq.~\ref{ineq:matrix_G_properties}, $\circled{2}$ follows from Lemma~\ref{lem:positive_definite_property_in_one_epoch} and $\circled{3}$ follows from the gradient upper bound assumption, the forth point in Assumption~\ref{ass:comm_ass} and Lemma~\ref{lem:sherman_morrison_formula}. 
    After submitting Eq.~\ref{ineq:T2_for_SD_component1} back into Eq.~\ref{ineq:T2_for_SD}, we have
    \begin{equation}
        \label{ineq:T2_for_SD_final}
        \begin{split}
            T_2\le \min\left\{\frac{5\eta_{.,t}m^{1.5}G^3}{2\sqrt{2}\lambda_{min}\left(\mG_{1,t}\right)}, \frac{5\eta_{.,t}m^{1.5}G}{2\sqrt{2}}\right\}.
        \end{split}
    \end{equation}

    For $T_3$ in Eq.~\ref{ineq:sufficient_descent_start}, we have
    \begin{equation}
        \label{ineq:T3_for_SD}
        \begin{aligned}[b]
            T_3 = & \frac{L}{2}\left\|\sum_{i=1}^m -\eta_{.,t}\mG^{-\frac{1}{2}}_{i,t}\grad f_{\sB_i^t}(\vx_i^t)\right\|^2 = \frac{L\eta_{.,t}^2}{2}\left\|\sum_{i=1}^m \left(\mG^{-\frac{1}{2}}_{i,t}-\mG_{m,t}^{-\frac{1}{2}}\right)\grad f_{\sB_i^t}(\vx_i^t) + \mG_{m,t}^{-\frac{1}{2}}\sum_{i=1}^m \grad f_{\sB_i^t}(\vx_i^t)\right\|^2\\
            \le & L\eta_{.,t}^2\left[\sum_{i=1}^m \grad f_{\sB_i^t}(\vx_i^t)\right]^\top\mG^{-1}_{m,t}\left[\sum_{i=1}^m \grad f_{\sB_i^t}(\vx_i^t)\right] + L\eta_{.,t}^2 \left\|\sum_{i=1}^m \left(\mG^{-\frac{1}{2}}_{i,t}-\mG_{m,t}^{-\frac{1}{2}}\right)\grad f_{\sB_i^t}(\vx_i^t)\right\|^2.
        \end{aligned}
    \end{equation}
    Hence, if the step size is small enough, we then obtain
    \begin{equation}
        \label{ineq:T3_for_SD_component1}
        \begin{aligned}[b]
            &\eta_{.,t}\le \frac{c^2_{\sigma}}{16nLG} \mathop{\le}^{\circled{1}} \frac{c_{\sigma}}{16nL}\le \frac{c_{\sigma}}{16\sqrt{n\Gamma}L} = \frac{\frac{c_{\sigma}m}{4\sqrt{n\Gamma}}}{4mL} \mathop{\le}^{\circled{2}} \frac{\sqrt{\frac{\delta_{m,t}-\delta_{m,t-1}}{\Gamma}}}{4mL}  \le \frac{\lambda_{min}\left(\mG_{m,t}^{\frac{1}{2}}\right)}{4mL} \\
            \Rightarrow &\eta_{.,t}\mI \preceq \frac{\mG^{\frac{1}{2}}_{m,t}}{4mL} \mathop{\Rightarrow}^{\circled{3}} L\eta_{.,t}^2\mG^{-1}_{m,t} \preceq \frac{\eta_{.,t}}{4m}\mG_{m,t}^{-\frac{1}{2}}\\
            \Rightarrow & \left(\sum_{i=1}^m \grad f_{\sB_i^t}(\vx_i^t)\right)^\top L\eta_{.,t}^2\mG^{-1}_{m,t}\left(\sum_{i=1}^m \grad f_{\sB_i^t}(\vx_i^t)\right) \le \left(\sum_{i=1}^m\grad f_{\sB_i^t}(\vx_i^t)\right)^\top \frac{\eta_{.,t}}{4m}\mG_{m,t}^{-\frac{1}{2}}\left(\sum_{i=1}^m\grad f_{\sB_i^t}(\vx_i^t)\right),
        \end{aligned}
    \end{equation}
    where $\circled{1}$ follows from the definition of constant $G$, $\circled{2}$ follows from Lemma~\ref{lem:G_trace_lower_bound}, and
    $\circled{3}$ follows from Lemma~\ref{lem:conjugate_rule}.
    Besides, with the same step size upper bound, we have
    \begin{equation}
        \label{ineq:T3_for_SD_component2}
        \begin{aligned}[b]
            &  L\eta_{.,t}^2 \left\|\sum_{i=1}^m \left(\mG^{-\frac{1}{2}}_{i,t}-\mG_{m,t}^{-\frac{1}{2}}\right)\grad f_{\sB_i^t}(\vx_i^t)\right\|^2 \le \frac{\eta_{.,t}}{4m} \left\|\sum_{i=1}^m\mG_{m,t}^{\frac{1}{4}}\left(\mG_{m,t}^{-\frac{1}{2}}-\mG_{i,t}^{-\frac{1}{2}}\right)\grad f_{\sB_i^t}(\vx_i^t)\right\|^2\\
            &\mathop{\le}^{\circled{1}} \frac{\eta_{.,t}}{4}\sum_{i=1}^m  \grad f^\top_{\sB_i^t}(\vx_i^t) \left(\mG_{m,t}^{-\frac{1}{2}}-\mG_{i,t}^{-\frac{1}{2}}\right)\mG_{m,t}^{\frac{1}{2}}\left(\mG_{m,t}^{-\frac{1}{2}}-\mG_{i,t}^{-\frac{1}{2}}\right)\grad f_{\sB_i^t}(\vx_i^t)\mathop{\le}^{\circled{2}} \min\left\{\frac{\eta_{.,t}m^{1.5}G^3}{2\sqrt{2}\lambda_{min}\left(\mG_{1,t}\right)}, \frac{\eta_{.,t}m^{1.5}G}{2\sqrt{2}}\right\},
        \end{aligned}
    \end{equation}
    where $\circled{1}$ follows from Eq.~\ref{ineq:T2_for_SD} and $\circled{2}$ follows from similar techniques with Eq.~\ref{ineq:T2_for_SD_component1}.
    Hence, combining Eq.~\ref{ineq:T3_for_SD_component1}, Eq.~\ref{ineq:T3_for_SD_component2} with Eq.~\ref{ineq:T3_for_SD}, we obtain
    \begin{equation}
        \label{ineq:T3_for_SD_final}
        \begin{split}
            T_3\le \frac{\eta_{.,t}}{4m}\left(\sum_{i=1}^m\grad f_{\sB_i^t}(\vx_i^t)\right)^\top\mG_{m,t}^{-\frac{1}{2}}\left(\sum_{i=1}^m\grad f_{\sB_i^t}(\vx_i^t)\right) + \min\left\{\frac{\eta_{.,t}m^{1.5}G^3}{2\sqrt{2}\lambda_{min}\left(\mG_{1,t}\right)}, \frac{\eta_{.,t}m^{1.5}G}{2\sqrt{2}}\right\}.
        \end{split}
    \end{equation}

    Submitting Eq.~\ref{ineq:T1_for_SD}, Eq.~\ref{ineq:T2_for_SD_final} and Eq.~\ref{ineq:T3_for_SD_final} back into Eq.~\ref{ineq:sufficient_descent_start}, we obtain
    \begin{equation}
        \label{ineq:sufficient_descent_end}
        \begin{aligned}[b]
            &\frac{\eta_{.,t}}{4m} \left(\sum_{i=1}^m \grad f_{\sB_i^t}(\vx_i^t)\right)^\top\mG_{m,t}^{-\frac{1}{2}}\left(\sum_{i=1}^m \grad f_{\sB_i^t}(\vx_i^t)\right)\\
            \le &f(\vx_1^t) - f(\vx_{m+1}^t) + \min\left\{\frac{2\eta_{.,t}^3L^2G^2m^3}{3\lambda_{min}^{1.5}\left(\mG_{1,t}\right)}, \frac{2\eta^3_{.,t}L^2m^3}{3\sqrt{\lambda_{min}\left(\mG_{m,t}\right)}}\right\}+ \min\left\{\frac{3\eta_{.,t}m^{1.5}G^3}{\sqrt{2}\lambda_{min}\left(\mG_{1,t}\right)}, \frac{3\eta_{.,t}m^{1.5}G}{\sqrt{2}}\right\},
        \end{aligned}
    \end{equation}
    to complete the proof.
\end{proof}
\begin{lemma}
    \label{lem:gradient_norm_upper_bound_component1}
    In \radagrad, suppose that the Assumption~\ref{ass:comm_ass} and Assumption~\ref{ass:H_full_column_rank_ass} hold, $\Gamma\ge 1$, the perturbation $\delta_{i,t}$ is no decreasing and $\delta_{m,i} \le imG^2$, we have
    \begin{equation}
        \label{ineq:gradient_norm_upper_bound_component1}
        \begin{aligned}[b]
            & \left[\sum_{j=1}^m \grad f_{\sB_j^i}(\vx_j^i)\right]^\top\mG^{-\frac{1}{2}}_{m,i}\left[\sum_{j=1}^m \grad f_{\sB_j^i}(\vx_j^i)\right]\\
            \ge &\sqrt{\frac{\delta_{m,i}-\delta_{m,i-1}}{4G^2imd\Gamma}}\left[\sum_{j=1}^m \grad f_{\sB_j^i}(\vx_j^i)\right]^\top\left(\mG_{m,i}-\mG_{m,i-1}\right)^{-\frac{1}{2}}\left[\sum_{j=1}^m \grad f_{\sB_j^i}(\vx_j^i)\right]
        \end{aligned}
    \end{equation}
\end{lemma}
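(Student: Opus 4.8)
The plan is to reduce the claimed inequality between quadratic forms to a single Loewner-order comparison $\mG_{m,i} \preceq C_i\left(\mG_{m,i} - \mG_{m,i-1}\right)$ and then invoke operator monotonicity of the square root and of the inverse. Write $\vv \coloneqq \sum_{j=1}^m \grad f_{\sB_j^i}(\vx_j^i)$ and $\Delta_i \coloneqq \delta_{m,i} - \delta_{m,i-1}$; we read the statement under $\Delta_i > 0$, which holds whenever $\delta_{j,i}$ is chosen as in Theorem~\ref{thm:radagrad_convergence_rate} and the $i$-th epoch's mini-batch gradients do not all vanish (otherwise the difference matrix is singular and the right-hand side is vacuous). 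The first step is algebraic: telescoping the sum over $\tau$ in the definition of $\mG_{i,t}$ (Eq.~\ref{eq:G_matrix_definition}) gives $\mG_{m,i} = \sum_{\tau=1}^i \mH_{m,\tau}\mH_{m,\tau}^\top + \frac{\delta_{m,i}}{\Gamma}\mI$, hence $\mG_{m,i} - \mG_{m,i-1} = \mH_{m,i}\mH_{m,i}^\top + \frac{\Delta_i}{\Gamma}\mI \succeq \frac{\Delta_i}{\Gamma}\mI \succ 0$.

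Next I would crudely upper bound $\mG_{m,i}$. For every $\tau$, the Gradient Bounded Assumption (item~4 of Assumption~\ref{ass:comm_ass}) together with $\left\|\grad f_{\sB_j^\tau}(\vx)\right\| \le G$ (itself a consequence of the triangle inequality) gives $\lambda_{\max}\left(\mH_{m,\tau}\mH_{m,\tau}^\top\right) \le \mathrm{tr}\left(\mH_{m,\tau}\mH_{m,\tau}^\top\right) = \sum_{j=1}^m \left\|\grad f_{\sB_j^\tau}(\vx_j^\tau)\right\|^2 \le mG^2$, so $\mH_{m,\tau}\mH_{m,\tau}^\top \preceq mG^2\mI$. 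Summing over $\tau \le i$ and using $\delta_{m,i}\le imG^2$ and $\Gamma\ge 1$ yields $\mG_{m,i} \preceq imG^2\left(1 + \frac{1}{\Gamma}\right)\mI \preceq 2imG^2\mI$. Combining with the first step, $2imG^2\mI = \frac{2imG^2\Gamma}{\Delta_i}\cdot\frac{\Delta_i}{\Gamma}\mI \preceq \frac{2imG^2\Gamma}{\Delta_i}\left(\mG_{m,i}-\mG_{m,i-1}\right)$, and since $d\ge 1$ this gives $\mG_{m,i} \preceq \frac{4imdG^2\Gamma}{\Delta_i}\left(\mG_{m,i}-\mG_{m,i-1}\right)$.

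Finally I would push this through the matrix root and inverse: Lemma~\ref{lem:matrix_root_positive_definite_lemma} turns the last display into $\mG_{m,i}^{1/2} \preceq \sqrt{\frac{4imdG^2\Gamma}{\Delta_i}}\left(\mG_{m,i}-\mG_{m,i-1}\right)^{1/2}$, and Lemma~\ref{lem:matrix_inverse_positive_definite_lemma} then gives $\mG_{m,i}^{-1/2} \succeq \sqrt{\frac{\Delta_i}{4imdG^2\Gamma}}\left(\mG_{m,i}-\mG_{m,i-1}\right)^{-1/2}$; sandwiching both sides between $\vv^\top$ and $\vv$ is exactly Eq.~\ref{ineq:gradient_norm_upper_bound_component1}. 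I do not anticipate a real obstacle: the only things to watch are keeping the direction of each inequality correct through the two monotonicity steps and the harmless degenerate case $\Delta_i = 0$. Note the constant $4imdG^2\Gamma$ in the statement carries slack — the argument above already works with $2imG^2\Gamma$, and in particular does not actually need the dimension $d$ — so there is no tightness issue to fight.
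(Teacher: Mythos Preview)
Your proposal is correct and follows essentially the same route as the paper: both reduce to the Loewner comparison $\beta_i\,\mG_{m,i}^{1/2} \preceq (\mG_{m,i}-\mG_{m,i-1})^{1/2}$ via eigenvalue bounds and then apply Lemma~\ref{lem:matrix_inverse_positive_definite_lemma}. Your upper bound on $\lambda_{\max}(\mG_{m,i})$ is in fact cleaner than the paper's---the paper bounds $\lambda_{\max}(\mG_{m,i}^{1/2})$ through $\lambda_{\max}(\hat{\mH}_{m,i}^{1/2}) \le \mathrm{tr}(\hat{\mH}_{m,i}^{1/2}) \le \sqrt{d\cdot\mathrm{tr}(\hat{\mH}_{m,i})}$, which is precisely where the dimension factor $d$ in the statement originates; as you noted, that factor is pure slack.
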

\begin{proof}
    To simplify notations in the following analysis, we set $\hat{\mH}_{m,t} \coloneqq \sum\limits_{\tau=1}^t \mH_{m,\tau}\mH_{m,\tau}^\top$.
    Then, we have
    \begin{equation}
        \label{ineq:sum_of_HHT_tr_upper_bound}
        \begin{aligned}[b]
            \left[tr\left(\hat{\mH}^{\frac{1}{2}}_{m,i}\right)\right]^2 = &\left[\sqrt{\lambda_1\left(\hat{\mH}_{m,i}\right)}+ \sqrt{\lambda_2\left(\hat{\mH}_{m,i}\right)}+\ldots+\sqrt{\lambda_d\left(\hat{\mH}_{m,i}\right)}\right]^2\\
            \le & \left[\lambda_1\left(\hat{\mH}_{m,i}\right) + \lambda_2\left(\hat{\mH}_{m,i}\right)+\ldots + \lambda_d\left(\hat{\mH}_{m,i}\right)\right]\cdot d = tr\left(\hat{\mH}_{m,i}\right) \cdot d \mathop{\le}^{\circled{1}} imG^2d
        \end{aligned}
    \end{equation}
    where $\circled{1}$ establishes because of the definition of matrix $\hat{\mH}_{m,i}$:
    \begin{equation}
        \label{ineq:trace_H_upper_bound}
        tr\left(\hat{\mH}_{m,i}\right) = \sum_{j=1}^{i}\sum_{k=1}^m \left\|\grad f_{\sB_k^j}(\vx_k^j)\right\|^2 \le imG^2.
    \end{equation}
    According to the definition of $\mG_{m,i}\in \mathbb{R}^{d\times d}$ in Eq.~\ref{eq:G_matrix_definition}, we have
    \begin{equation}
        \label{ineq:lambda_G_root_upper_bound}
        \begin{aligned}[b]
            \lambda_{max}\left(\mG^{\frac{1}{2}}_{m,i}\right) = &\lambda_{max}\left[\left(\hat{\mH}_{m,i}+\frac{\delta_{m,i}}{\Gamma}I\right)^{\frac{1}{2}}\right] \mathop{\le}^{\circled{1}} \lambda_{max}\left(\hat{\mH}_{m,i}^{\frac{1}{2}} + \sqrt{\frac{\delta_{m,i}}{\Gamma}}\cdot I\right)\\
            \mathop{\le}^{\circled{2}} &\lambda_{max}\left(\hat{\mH}_{m,i}^{\frac{1}{2}}\right) + \sqrt{\frac{\delta_{m,i}}{\Gamma}} \le tr\left(\hat{\mH}_{m,i}^{\frac{1}{2}}\right)+ \sqrt{\frac{\delta_{m,i}}{\Gamma}} \mathop{\le}^{\circled{3}} \sqrt{imd}G + \sqrt{\frac{im}{\Gamma}}G \mathop{\le}^{\circled{4}} 2\sqrt{imd}G,
        \end{aligned}
    \end{equation}
    where $\circled{1}$ follows from the fact $\left(\mM + c\cdot I\right)^{\frac{1}{2}}\preceq \mM^{\frac{1}{2}} + \sqrt{c}\cdot I, \forall A\succeq 0$, $\circled{2}$ follows from the triangle inequality, $\circled{3}$ follows from Eq.~\ref{ineq:trace_H_upper_bound} and $\delta_{m,i} \le imG^2$, $\circled{4}$ follows from the fact $\Gamma\ge 1$.
    Besides, when $\delta_{m,i}$ is increasing with $i$'s growth, i.e., $\delta_{m,i} - \delta_{m,i-1}\ge 0$,the matrix $\left(\mG_{m,i}-\mG_{m,i-1}\right)$ satisfies
    \begin{equation}
        \label{ineq:tr_root_psd_lower_bound_ano}
        \begin{aligned}[b]
            &\lambda_{min}\left[\left(\mG_{m,i}-\mG_{m,i-1}\right)^{\frac{1}{2}}\right] = \lambda_{min}\left[\left(\mH_{m,i}\mH^T_{m,i}+\frac{\delta_{m,i} - \delta_{m,i-1}}{\Gamma}\mI\right)^{\frac{1}{2}}\right]
            \ge \sqrt{\frac{\delta_{m,i} - \delta_{m,i-1}}{\Gamma}}.
        \end{aligned}
    \end{equation}
    Hence, if we set
    \begin{equation}
        \label{eq:beta_i_selection}
        \begin{split}
            \beta_{i} = \sqrt{\frac{\delta_{m,i}-\delta_{m,i-1}}{4G^2imd\Gamma}},
        \end{split}
    \end{equation}
    then, we obtain
    \begin{equation}
        \label{ineq:order_sufficient_condition_1_ano}
        \begin{split}
            \beta_{i}\lambda_{max}\left(\mG^{\frac{1}{2}}_{m,i}\right) \mathop{\le}^{\circled{1}}  \sqrt{\frac{\delta_{m,i} - \delta_{m,i-1}}{\Gamma}} \mathop{\le}^{\circled{2}} \lambda_{min}\left[\left(\mG_{m,i}-\mG_{m,i-1}\right)^{\frac{1}{2}}\right],
        \end{split}
    \end{equation}
    where $\circled{1}$ follows from Eq.~\ref{ineq:lambda_G_root_upper_bound} and $\circled{2}$ follows from Eq.~\ref{ineq:tr_root_psd_lower_bound_ano}.
    With the fact $\mG^{\frac{1}{2}}_{m,i}, \left(\mG_{m,i}-\mG_{m,i-1}\right)^{\frac{1}{2}}$ are positive-definite matrices, we have
    \begin{equation}
        \label{ineq:order_sufficient_condition_2}
        \begin{aligned}[b]
            & \beta_{i}\mG^{\frac{1}{2}}_{m,i} \preceq \left(\mG_{m,i}-\mG_{m,i-1}\right)^{\frac{1}{2}} \mathop{\Rightarrow}^{\circled{1}} \beta_{i}\left(\mG_{m,i}-\mG_{m,i-1}\right)^{-\frac{1}{2}} \preceq \mG^{-\frac{1}{2}}_{m,i}\\
            \Rightarrow & \left[\sum_{j=1}^m \grad f_{\sB_j^i}(\vx_j^i)\right]^\top\left[\beta_{i}\left(\mG_{m,i}-\mG_{m,i-1}\right)^{-\frac{1}{2}}\right]\left[\sum_{j=1}^m \grad f_{\sB_j^i}(\vx_j^i)\right]\\
            &\le \left[\sum_{j=1}^m \grad f_{\sB_j^i}(\vx_j^i)\right]^\top\mG^{-\frac{1}{2}}_{m,i}\left[\sum_{j=1}^m \grad f_{\sB_j^i}(\vx_j^i)\right],
        \end{aligned}
    \end{equation}
    where $\circled{1}$ follows from Lemma~\ref{lem:matrix_inverse_positive_definite_lemma}. Hence, we complete the proof.
\end{proof}
\begin{lemma}
    \label{lem:gradient_norm_upper_bound_component2}
    In \radagrad, suppose that the Assumption~\ref{ass:comm_ass} and Assumption~\ref{ass:H_full_column_rank_ass} hold, and the perturbation $\delta_{i,t}$ is no decreasing. 
    If $0 \le \delta_{m,i} - \delta_{m,i-1}\le m\lambda_{max}\left(\mH_{m,i}\mH_{m,i}^\top\right)$ and $\Gamma\ge m$ then we have
    \begin{equation}
        \label{ineq:gradient_norm_upper_bound_component2_con}
        \begin{split}
            \left[\sum_{j=1}^m \grad f_{\sB_j^i}(\vx_j^i)\right]^\top \left(\mG_{m,i}-\mG_{m,i-1}\right)^{-\frac{1}{2}}\left[\sum_{j=1}^m \grad f_{\sB_j^i}(\vx_j^i)\right] \ge \sqrt{\frac{m}{2c^2_{\kappa}}} \left\|\sum_{j=1}^m \grad f_{\sB_j^i}(\vx_j^i)\right\|
        \end{split}
    \end{equation}
\end{lemma}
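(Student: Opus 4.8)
The plan is to reduce the whole statement to one scalar comparison. Write $\vv := \sum_{j=1}^m \grad f_{\sB_j^i}(\vx_j^i)$ and observe that $\vv = \mH_{m,i}\vone_m$, where $\vone_m\in\sR^m$ is the all-ones vector, because the columns of $\mH_{m,i}$ are precisely the mini-batch gradients $\grad f_{\sB_j^i}(\vx_j^i)$. Also set $\Delta\mG := \mG_{m,i} - \mG_{m,i-1} = \mH_{m,i}\mH_{m,i}^\top + \frac{\delta_{m,i}-\delta_{m,i-1}}{\Gamma}\mI$, which follows from the definition of $\mG$ in Eq.~\ref{eq:G_matrix_definition}. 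The claim is then $\vv^\top (\Delta\mG)^{-\frac12}\vv \ge \sqrt{m/(2c_{\kappa}^2)}\,\left\|\vv\right\|$.

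First I would sandwich $\Delta\mG$ spectrally. Since $\mH_{m,i}\mH_{m,i}^\top \preceq \lambda_{max}(\mH_{m,i}\mH_{m,i}^\top)\mI$ and, by the hypotheses $\Gamma\ge m$ and $0\le \delta_{m,i}-\delta_{m,i-1}\le m\lambda_{max}(\mH_{m,i}\mH_{m,i}^\top)$, the scalar perturbation obeys $\frac{\delta_{m,i}-\delta_{m,i-1}}{\Gamma}\le \lambda_{max}(\mH_{m,i}\mH_{m,i}^\top)$, we get $\Delta\mG \preceq 2\lambda_{max}(\mH_{m,i}\mH_{m,i}^\top)\mI$. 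Applying Lemma~\ref{lem:matrix_root_positive_definite_lemma} and then Lemma~\ref{lem:matrix_inverse_positive_definite_lemma} gives $(\Delta\mG)^{-\frac12} \succeq \left(2\lambda_{max}(\mH_{m,i}\mH_{m,i}^\top)\right)^{-\frac12}\mI$, hence $\vv^\top(\Delta\mG)^{-\frac12}\vv \ge \left\|\vv\right\|^2 / \sqrt{2\lambda_{max}(\mH_{m,i}\mH_{m,i}^\top)}$.

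Second, I would lower-bound $\left\|\vv\right\|$ using $\vv = \mH_{m,i}\vone_m$: we have $\left\|\vv\right\|^2 = \vone_m^\top \mH_{m,i}^\top\mH_{m,i}\vone_m \ge m\,\lambda_{min}(\mH_{m,i}^\top\mH_{m,i})$. Since the nonzero eigenvalues of $\mH_{m,i}^\top\mH_{m,i}$ and $\mH_{m,i}\mH_{m,i}^\top$ coincide and $\mH_{m,i}$ has full column rank (Assumption~\ref{ass:H_full_column_rank_ass}), the bounded-condition-number hypothesis gives $\lambda_{min}(\mH_{m,i}^\top\mH_{m,i}) \ge \lambda_{max}(\mH_{m,i}\mH_{m,i}^\top)/c_{\kappa}$; because $c_\kappa\ge 1$ this yields $\left\|\vv\right\| \ge \sqrt{m\lambda_{max}(\mH_{m,i}\mH_{m,i}^\top)}/c_{\kappa}$. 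Substituting this into one of the two factors of $\left\|\vv\right\|^2$ in the bound from the previous step and cancelling $\sqrt{\lambda_{max}(\mH_{m,i}\mH_{m,i}^\top)}$ produces exactly $\vv^\top(\Delta\mG)^{-\frac12}\vv \ge \sqrt{m/(2c_{\kappa}^2)}\,\left\|\vv\right\|$.

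I expect no serious obstacle here: the matrix-monotonicity work is entirely handled by Lemma~\ref{lem:matrix_root_positive_definite_lemma} and Lemma~\ref{lem:matrix_inverse_positive_definite_lemma}. The only points needing care are (i) combining $\Gamma\ge m$ with the hypothesis $0\le \delta_{m,i}-\delta_{m,i-1}\le m\lambda_{max}(\mH_{m,i}\mH_{m,i}^\top)$ to bound the perturbation term by $\lambda_{max}(\mH_{m,i}\mH_{m,i}^\top)$, and (ii) invoking full column rank so that $\left\|\mH_{m,i}\vone_m\right\|$ cannot degenerate and the condition-number bound translates into a genuine lower bound on $\left\|\vv\right\|$.
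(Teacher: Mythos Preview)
Your proposal is correct and in fact cleaner than the paper's own argument. The paper proceeds by taking the SVD $\mH_{m,i}=\mU\mSigma\mV^\top$, writing $\sum_j\grad f_{\sB_j^i}(\vx_j^i)=\mH_{m,i}\vone_m$, and then computing both the quadratic form and $\|\vv\|^2$ explicitly in the singular-value coordinates $\gamma_j:=\vv_j^\top\vone_m$. This yields
\[
\vv^\top(\Delta\mG)^{-\frac12}\vv=\sum_{j=1}^m\frac{\tilde\lambda_j^2\gamma_j^2}{\sqrt{\tilde\lambda_j^2+(\delta_{m,i}-\delta_{m,i-1})/\Gamma}},\qquad \|\vv\|^2=\sum_{j=1}^m\tilde\lambda_j^2\gamma_j^2,
\]
after which term-by-term bounding (replacing each $\tilde\lambda_j$ in the denominator by $\tilde\lambda_1$, using $(\delta_{m,i}-\delta_{m,i-1})/\Gamma\le\tilde\lambda_1^2$, and finally invoking the condition-number bound $\tilde\lambda_m/\tilde\lambda_1\ge c_\kappa^{-1/2}$) produces the result. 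Your route bypasses the SVD entirely: you sandwich $\Delta\mG\preceq 2\lambda_{\max}(\mH_{m,i}\mH_{m,i}^\top)\mI$ and then use the operator monotonicity lemmas already in the paper to pass to $(\Delta\mG)^{-1/2}$, reducing everything to the single scalar inequality $\|\vv\|\ge\sqrt{m\lambda_{\max}}/c_\kappa$. Both proofs hinge on the same two hypotheses (the perturbation bound together with $\Gamma\ge m$, and the condition-number assumption), but yours is shorter, avoids coordinates, and makes the role of Lemmas~\ref{lem:matrix_root_positive_definite_lemma} and~\ref{lem:matrix_inverse_positive_definite_lemma} transparent. Incidentally, the intermediate bound $\|\vv\|\ge\sqrt{m\lambda_{\max}/c_\kappa}$ you obtain before weakening by $c_\kappa\ge 1$ actually gives the slightly sharper constant $\sqrt{m/(2c_\kappa)}$, matching what the paper's SVD computation also delivers before its final step.
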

\begin{proof}
    To simplify notations, we abbreviate $\mH_{m,i}$ as $\mH$ whose SVD can be formulated as
    \begin{equation}
        \label{eq:gradient_matrix_svd}
        \begin{split}
            \mH = \mU \mSigma\mV^\top,\quad \mU\in \sR^{d\times d},\mSigma\in \sR^{d\times m},\mV\in\sR^{m\times m}, 
        \end{split}
    \end{equation}
    where $\mU$ and $\mV$ are unitary matrices. Specifically, with Assumption~\ref{ass:H_full_column_rank_ass}, $\mSigma$ and $\mV$ can be written as
    \begin{equation}
        \label{eq:gradient_matrix_sigma}
        \begin{aligned}[b]
            & \mSigma = \left[
                    \begin{matrix}
                        \tilde{\mSigma} \\
                        0
                    \end{matrix}
                \right],\quad \tilde{\mSigma} = \mathop{diag}\left\{\tilde{\lambda}_1, \tilde{\lambda}_2,\ldots,\tilde{\lambda}_m\right\},\quad \mV =\left[
                \begin{matrix}
                    \vv_1 & \vv_2 & \ldots & \vv_m
                \end{matrix}
            \right],\quad \vv_i\in\sR^{m\times 1},\quad \forall i\in\sI_m.
        \end{aligned}
    \end{equation}
    Hence, we can reformulate $\left\|\sum_{j=1}^m \grad f_{\sB_j^i}(\vx_j^i)\right\|$ as
    \begin{equation}
        \label{eq:gradient_norm_refor}
        \begin{aligned}[b]
            & \left\|\sum_{j=1}^m \grad f_{\sB_j^i}(\vx_j^i)\right\|^2 = \left\|\mH_{m,i}\left[\sum_{j=1}^m \ve_j\right] \right\|^2 =\left[\sum_{j=1}^m \ve_j\right]^\top\mH^\top\mH\left[\sum_{j=1}^m \ve_j\right] = \left[\sum_{j=1}^m \ve_j\right]^\top\mV
            \tilde{\mSigma}^2\mV^\top\left[\sum_{j=1}^m \ve_j\right] \\
            =& \left[
                \begin{matrix}
                    \left[\sum_{j=1}^m \ve_j\right]^\top \vv_1 & \left[\sum_{j=1}^m \ve_j\right]^\top \vv_2 \ldots \left[\sum_{j=1}^m \ve_j\right]^\top \vv_m
                \end{matrix}    
            \right]\left[
                \begin{matrix}
                    \tilde{\lambda}_1^2 & 0 & \ldots & 0\\
                    0 & \tilde{\lambda}_2^2 & \ldots & 0\\
                    \ldots & \ldots & \ldots & \ldots\\
                    0 & 0 & \ldots & \tilde{\lambda}_m^2
                \end{matrix}    
            \right]\left[
                \begin{matrix}
                    \vv_1^\top \left[\sum_{j=1}^m \ve_j\right]\\
                    \vv_2^\top \left[\sum_{j=1}^m \ve_j\right]\\
                    \ldots \\
                    \vv_m^\top \left[\sum_{j=1}^m \ve_j\right]\\
                \end{matrix}    
            \right]\\
            =& \sum_{j=1}^{m}\tilde{\lambda}^2_j\left\|\left[\sum_{k=1}^m \ve_k\right]^T\vv_j\right\|^2 \mathop{=}^{\circled{1}} \sum\limits_{j=1}^m \tilde{\lambda}_j^2\gamma_j^2,
        \end{aligned}
    \end{equation}
    where $\ve_i$ denotes the $0$-$1$ vector whose $i$-th coordinate is $1$ while other coordinates are $0$s. 
    Besides, $\circled{1}$ in Eq.~\ref{eq:gradient_norm_refor} is established because we set
    \begin{equation}
        \label{eq:e_vector_decomposition}
        \begin{split}
            \sum_{k=1}^m \ve_k = \gamma_1\vv_1 + \gamma_2\vv_2 + \ldots + \gamma_m\vv_m,\quad \left\|\sum_{k=1}^m \ve_k\right\|^2 = \sum_{k=1}^m \gamma_k^2 = m.
        \end{split}
    \end{equation}
    with the full-rank property of matrix $\mV$.    
    In addtion, we have
    \begin{equation}
        \label{eq:gradient_norm_upper_refor}
        \begin{aligned}[b]
            & \left[\sum_{j=1}^m \grad f_{\sB_j^i}(\vx_j^i)\right]^\top \left(\mG_{m,i}-\mG_{m,i-1}\right)^{-\frac{1}{2}}\left[\sum_{j=1}^m \grad f_{\sB_j^i}(\vx_j^i)\right]\\
            = & \left[\sum_{j=1}^m \ve_j\right]^\top\mH^\top \left[\mH\mH^T + \frac{\delta_{m,i} - \delta_{m,i-1}}{\Gamma}\mI\right]^{-\frac{1}{2}}\mH\left[\sum_{j=1}^m \ve_j\right]\\
            = & \left[\sum_{j=1}^m \ve_j\right]^\top\mV\mSigma^\top\mU^\top\left[\mU\left(\mSigma\mSigma^\top + \frac{\delta_{m,i}-\delta_{m,i-1}}{\Gamma}\mI\right)\mU^\top\right]^{-\frac{1}{2}}\mU\mSigma\mV \left[\sum_{j=1}^m \ve_j\right]\\
            = & \left[\sum_{j=1}^m \ve_j\right]^\top\mV\mSigma^\top\left[
                \begin{matrix}
                    \left(\tilde{\mSigma}^2+\frac{\delta_{m,i}-\delta_{m,i-1}}{\Gamma}\mI\right)^{-\frac{1}{2}} & 0\\
                    0 & \left(\frac{\delta_{m,i}-\delta_{m,i-1}}{\Gamma}\mI\right)^{-\frac{1}{2}}
                \end{matrix}
            \right]\mSigma\mV^\top\left[\sum_{j=1}^m \ve_j\right]\\
            = & \sum_{j=1}^m\left[\frac{\tilde{\lambda}^2_j}{\sqrt{\tilde{\lambda}_j^2 + \frac{\delta_{m,i}-\delta_{m,i-1}}{\Gamma}}}\left\|\left[\sum_{k=1}^m \ve_k\right]^\top\vv_j\right\|^2\right] \mathop{=}^{\circled{1}} \sum_{j=1}^m\left[\frac{\tilde{\lambda}^2_j\gamma_j^2}{\sqrt{\tilde{\lambda}_j^2 + \frac{\delta_{m,i}-\delta_{m,i-1}}{\Gamma}}}\right]\\
            \ge & \frac{\tilde{\lambda}^2_m}{\sqrt{\tilde{\lambda}_1^2 + \frac{\delta_{m,i}-\delta_{m,i-1}}{\Gamma}}}\sum_{j=1}^m \gamma_j^2= \frac{\tilde{\lambda}^2_m m}{\sqrt{\tilde{\lambda}_1^2 + \frac{\delta_{m,i}-\delta_{m,i-1}}{\Gamma}}}.
        \end{aligned}
    \end{equation}
    where $\circled{1}$ follows from Eq.~\ref{eq:e_vector_decomposition}.
    Hence, we obtain
    \begin{equation}
        \label{eq:gradient_norm_upper_refor_final}
        \begin{aligned}[b]
            & \left(\left[\sum_{j=1}^m \grad f_{\sB_j^i}(\vx_j^i)\right]^\top \left(\mG_{m,i}-\mG_{m,i-1}\right)^{-\frac{1}{2}}\left[\sum_{j=1}^m \grad f_{\sB_j^i}(\vx_j^i)\right]\right)^2\\
            \ge & \frac{\tilde{\lambda}^2_m m}{\sqrt{\tilde{\lambda}_1^2 + \frac{\delta_{m,i}-\delta_{m,i-1}}{\Gamma}}}\sum_{j=1}^m\left[\frac{\tilde{\lambda}^2_j\gamma_j^2}{\sqrt{\tilde{\lambda}_j^2 + \frac{\delta_{m,i}-\delta_{m,i-1}}{\Gamma}}}\right]\ge  \frac{\tilde{\lambda}^2_m m}{\tilde{\lambda}_1^2 + \frac{\delta_{m,i}-\delta_{m,i-1}}{\Gamma}}\sum_{j=1}^{m}\tilde{\lambda}^2_j\gamma_j^2.
        \end{aligned}
    \end{equation}
    As a result, if $\Gamma\ge m$ is established then we have $\delta_{m,i} - \delta_{m,i-1}\le \Gamma\tilde{\lambda}_1^2$ (with the definition of $\tilde{\lambda}_1$) and
    \begin{equation}
        \label{ineq:gradient_norm_upper_bound_component2}
        \begin{aligned}[b]
            & \left[\sum_{j=1}^m \grad f_{\sB_j^i}(\vx_j^i)\right]^\top \left(\mG_{m,i}-\mG_{m,i-1}\right)^{-\frac{1}{2}}\left[\sum_{j=1}^m \grad f_{\sB_j^i}(\vx_j^i)\right]\\
            \ge & \sqrt{\frac{\tilde{\lambda}^2_m m}{\tilde{\lambda}_1^2 + \frac{\delta_{m,i}-\delta_{m,i-1}}{\Gamma}}\sum_{j=1}^{m}\tilde{\lambda}^2_j\gamma_j^2} \ge \frac{\tilde{\lambda}_m}{\tilde{\lambda}_1}\cdot \sqrt{\frac{m}{2}}\sqrt{\sum_{j=1}^{m}\tilde{\lambda}^2_j\gamma_j^2} = \sqrt{\frac{m}{2c^2_{\kappa}}} \left\|\sum_{j=1}^m \grad f_{\sB_j^i}(\vx_j^i)\right\|
        \end{aligned}
    \end{equation}
    with Assumption~\ref{ass:H_full_column_rank_ass} to complete the proof.
\end{proof}
\begin{theorem}
    \label{thm:adagrad_convergence_rate}
    In \radagrad, suppose that the Assumption~\ref{ass:comm_ass} and Assumption~\ref{ass:H_full_column_rank_ass} hold. 
    If hyper-parameters $\delta_{j,i}$, $\Gamma$ and the step size $\eta$ satisfy
    \begin{equation}
        \label{eq:final_step_size_selection_ac}
        \begin{aligned}[b]
            \delta_{j,i} = \sum_{p=1}^{i-1}\sum_{q=1}^m \left\|\grad f_{\sB_q^p}(\vx_q^p)\right\|^2 + \sum_{q=1}^j \left\|\grad f_{\sB_q^i}(\vx_q^i)\right\|^2, \forall\ i\in\sI_T, j\in\sI_m,\quad m\le \Gamma\le n,\quad \mathrm{and}\quad \eta \le \frac{c_\sigma^2}{16nLG}
        \end{aligned}
    \end{equation}
    we have
    \begin{equation}
        \label{ineq:gradient_norm_convergence_4}
        \begin{split}
            \mathbb{E}_t\left[\left\|\grad f(\vx_1^t)\right\|\right] \le \frac{C_0}{\eta \sqrt{T}} + \frac{C_1}{\sqrt{T}} + \frac{C_2\eta}{\sqrt{T}} + \frac{C_3\eta^2}{\sqrt{T}} + \frac{C_4\ln(T)}{\sqrt{T}} + \frac{C_5\eta\ln(T)}{\sqrt{T}},
        \end{split}
    \end{equation}
    where $C_0, C_1, \ldots, C_5$  are constants and defined in the proof.
\end{theorem}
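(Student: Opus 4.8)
The proof chains three ingredients already available: the per-epoch sufficient descent of Lemma~\ref{lem:sufficient_descent}, the quadratic-form lower bounds of Lemmas~\ref{lem:gradient_norm_upper_bound_component1} and~\ref{lem:gradient_norm_upper_bound_component2}, and the closeness estimate $\|\grad f(\vx_1^t)-\vs_t\|\le O(\eta/\sqrt t)$ of Lemma~\ref{lem:auxiliary_sequence_closeness}, where $\vs_t=\tfrac1m\sum_{i=1}^m\grad f_{\sB_i^t}(\vx_i^t)$. Combined, these produce a per-epoch inequality of the shape $\tfrac{c'\eta}{\sqrt{td}}\|\grad f(\vx_1^t)\|\le f(\vx_1^t)-f(\vx_1^{t+1})+(\text{small remainder})$. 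Telescoping over $t=1,\dots,T$, dividing by the weight $\sum_{t=1}^T t^{-1/2}\ge\sqrt T$, and identifying the weighted average of $\|\grad f(\vx_1^t)\|$ with $\mathbb{E}_t[\|\grad f(\vx_1^t)\|]$ (the quantity appearing in Corollary~\ref{cor:radagrad_complexity}) then yields the six-term bound.

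\textbf{Per-epoch inequality.} Fix $t$. Since $\sum_{i=1}^m\grad f_{\sB_i^t}(\vx_i^t)=m\vs_t$, the left side of Lemma~\ref{lem:sufficient_descent} is $\tfrac{\eta m}{4}\vs_t^\top\mG_{m,t}^{-1/2}\vs_t$, so that lemma reads $\tfrac{\eta m}{4}\vs_t^\top\mG_{m,t}^{-1/2}\vs_t\le f(\vx_1^t)-f(\vx_1^{t+1})+R_t$ with $R_t$ the sum of its two $\min\{\cdot,\cdot\}$ remainders and $\vx_1^{t+1}=\vx_{m+1}^t$. Applying Lemma~\ref{lem:gradient_norm_upper_bound_component1} then Lemma~\ref{lem:gradient_norm_upper_bound_component2} to the vector $m\vs_t$ gives $\vs_t^\top\mG_{m,t}^{-1/2}\vs_t\ge\tfrac1m\sqrt{\tfrac{(\delta_{m,t}-\delta_{m,t-1})\,m}{8G^2tmd\Gamma c_\kappa^2}}\,\|\vs_t\|$, whose hypotheses ($\Gamma\ge m$, $\delta$ nondecreasing, $\delta_{m,t}\le tmG^2$, $\delta_{m,t}-\delta_{m,t-1}=\mathrm{tr}(\mH_{m,t}\mH_{m,t}^\top)\le m\lambda_{\max}(\mH_{m,t}\mH_{m,t}^\top)$) all follow from the prescribed $\delta_{j,i}$ and the gradient bound. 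Because Algorithm~\ref{alg:radagrad_full_matrices} re-shuffles until $\sum_j\|\grad f_{\sB_j^t}(\vx_1^t)\|^2\ge\tfrac{c_\sigma^2m^2}{8n}$ and $\eta\le\tfrac{c_\sigma^2}{16nLG}$, the deterministic half of the argument inside Lemma~\ref{lem:G_trace_lower_bound} gives $\delta_{m,t}-\delta_{m,t-1}=\sum_j\|\grad f_{\sB_j^t}(\vx_j^t)\|^2\ge\tfrac{c_\sigma^2m^2}{16n}$ surely, so the coefficient above is $\ge c'/\sqrt{td}$ for a constant $c'$ depending only on $c_\sigma,G,m,n,\Gamma,c_\kappa$. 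Finally $\|\vs_t\|\ge\|\grad f(\vx_1^t)\|-\|\grad f(\vx_1^t)-\vs_t\|$ and the gap is $O(\eta/\sqrt t)$ by Lemma~\ref{lem:auxiliary_sequence_closeness} (equivalently, by bounding $\|\vx_i^t-\vx_1^t\|$ via Corollary~\ref{lem:bound_of_parameters} and $\lambda_{\min}(\mG_{1,t})\ge\delta_{1,t}/\Gamma\ge(t-1)c_\sigma^2m^2/(16n\Gamma)$). Collecting, for every $t$,
\[
\frac{c'\eta}{\sqrt{td}}\,\|\grad f(\vx_1^t)\|\ \le\ f(\vx_1^t)-f(\vx_1^{t+1})\ +\ O\!\Big(\tfrac{\eta}{t}\Big)+O\!\Big(\tfrac{\eta^2}{t}\Big)+O\!\Big(\tfrac{\eta^3}{t^{3/2}}\Big),
\]
where the first two remainders come from the surviving branch of $R_t$ and from the $\vs_t$-gap, and the last from the other branch of $R_t$, each after substituting the $\Omega(t)$ lower bounds on $\lambda_{\min}(\mG_{1,t})$ and $\lambda_{\min}(\mG_{m,t})$.

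\textbf{Telescoping and conclusion.} Summing the display over $t=1,\dots,T$, the function gaps telescope to $f(\vx_1^1)-f(\vx_1^{T+1})\le\Delta<\infty$, while $\sum_t O(\eta/t)+O(\eta^2/t)=O((\eta+\eta^2)\ln T)$ and $\sum_t O(\eta^3/t^{3/2})=O(\eta^3)$. Dividing by $c'/\sqrt d$ and then by $\sum_{t=1}^T t^{-1/2}\ge\sqrt T$, and using $\mathbb{E}_t[\|\grad f(\vx_1^t)\|]=\big(\sum_t t^{-1/2}\|\grad f(\vx_1^t)\|\big)\big/\big(\sum_t t^{-1/2}\big)$ (no outer expectation is needed, since every bound above holds for each realization of the shuffles), one obtains after grouping constants
\[
\mathbb{E}_t[\|\grad f(\vx_1^t)\|]\ \le\ \frac{C_0}{\eta\sqrt T}+\frac{C_1}{\sqrt T}+\frac{C_2\eta}{\sqrt T}+\frac{C_3\eta^2}{\sqrt T}+\frac{C_4\ln T}{\sqrt T}+\frac{C_5\eta\ln T}{\sqrt T},
\]
with $C_0$ of order $\sqrt d\,\Delta/m$ and $C_1,\dots,C_5$ absorbing the non-asymptotic pieces of $R_t$, of the $\vs_t$-gap, and the $\sqrt d$ factor; this is exactly the claim.

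\textbf{Main obstacle.} The crux is the bookkeeping of $R_t$ and of the $\vs_t$-gap: at first glance each contributes an $O(\eta^k)$ quantity per epoch, which would sum to $O(T\eta^k)$ and destroy the $\tilde O(T^{-1/2})$ rate. The rescue is that the accumulated second moments force both $\lambda_{\min}(\mG_{1,t})$ and $\lambda_{\min}(\mG_{m,t})$ to grow at least linearly in $t$, converting those terms into summable $O(\eta^k/t^p)$ with $p\ge1$; one must pick the correct branch of every $\min\{\cdot,\cdot\}$ in Lemma~\ref{lem:sufficient_descent} and Corollary~\ref{lem:bound_of_parameters} to expose this, and carry the deterministic form of Lemma~\ref{lem:G_trace_lower_bound} so the $\Theta(1/\sqrt{td})$ coefficient on $\|\grad f(\vx_1^t)\|$ is legitimate at every epoch. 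The conceptual point, already isolated in Section~\ref{sec:RGPS_sufficient_descent}, is that Lemmas~\ref{lem:gradient_norm_upper_bound_component1}--\ref{lem:gradient_norm_upper_bound_component2} are applied to $m\vs_t$ rather than to $\grad f(\vx_1^t)$ directly; this substitution is precisely what lowers the exponent of $\|\grad f(\vx_1^t)\|$ from $2$ to $1$ and hence upgrades the $\epsilon^{-4}$ complexity to $\epsilon^{-2}$.
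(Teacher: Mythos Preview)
Your proposal is correct and follows essentially the same route as the paper: chain Lemma~\ref{lem:gradient_norm_upper_bound_component1} and Lemma~\ref{lem:gradient_norm_upper_bound_component2} into the left side of Lemma~\ref{lem:sufficient_descent}, use the deterministic lower bound $\delta_{m,t}-\delta_{m,t-1}\ge c_\sigma^2m^2/(16n)$ guaranteed by the rejection sampling, bridge from $\vs_t$ to $\grad f(\vx_1^t)$ via Corollary~\ref{lem:bound_of_parameters} and the $\Omega(t)$ growth of $\lambda_{\min}(\mG_{1,t})$, then telescope and divide by $\sum_t t^{-1/2}$. The one place your write-up is slightly loose is the epoch $t=1$, where $\lambda_{\min}(\mG_{1,1})\ge(t-1)c_\sigma^2m^2/(16n\Gamma)$ gives nothing and you must take the \emph{other} branch of each $\min\{\cdot,\cdot\}$ in Lemma~\ref{lem:sufficient_descent} and Corollary~\ref{lem:bound_of_parameters}; in the paper this is handled explicitly and is precisely what generates the constants $C_1$ and $C_2$ (your displayed remainder $O(\eta/t)+O(\eta^2/t)+O(\eta^3/t^{3/2})$ only accounts for $C_0,C_3,C_4,C_5$).
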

\begin{proof}
    According to the definition of $\delta_{j,i}$, we have $\delta_{m,i} \le imG^2$ due to the gradient norm upper bound assumption, i.e., the forth item in Assumption~\ref{ass:comm_ass}.
    Besides, we have
    \begin{equation}
        \label{ineq:delta_properties}
        \begin{aligned}[b]
            \frac{c_\sigma^2m^2}{16n}\mathop{\le}^{\circled{1}} \sum_{j=1}^m \left\|\grad f_{\sB_j^i}(\vx_j^i)\right\|^2 = \delta_{m,i}-\delta_{m,i-1} = tr(\mH_{m,i}\mH_{m,i}^\top) \le m\cdot \lambda_{max}\left(\mH_{m,i}\mH_{m,i}^\top\right),
        \end{aligned}
    \end{equation}
    where $\circled{1}$ follows from Lemma~\ref{lem:G_trace_lower_bound} 
    Then, we have
    \begin{equation}
        \label{ineq:comb_gradient_norm_upper_bound_LHS_1}
        \begin{aligned}[b]
            &\frac{c_{\sigma}m}{8\sqrt{2}Gc_{\kappa}}\cdot \sqrt{\frac{1}{nd\Gamma}}\cdot\frac{1}{\sqrt{i}}\left\|\sum_{j=1}^m \grad f_{\sB_j^i}(\vx_j^i)\right\| = \frac{c_{\sigma}m}{4\sqrt{n}}\cdot \left(2G\sqrt{imd\Gamma}\right)^{-1}\cdot \sqrt{\frac{m}{2c^2_{\kappa}}}\left\|\sum_{j=1}^m \grad f_{\sB_j^i}(\vx_j^i)\right\| \\
            \mathop{\le}^{\circled{1}} & \frac{\sqrt{\delta_{m,i}-\delta_{m,i-1}}}{2G\sqrt{imd\Gamma}} \cdot \sqrt{\frac{m}{2c^2_{\kappa}}}\left\|\sum_{j=1}^m \grad f_{\sB_j^i}(\vx_j^i)\right\| = \sqrt{\frac{\delta_{m,i}-\delta_{m,i-1}}{4G^2imd\Gamma}}\cdot \sqrt{\frac{m}{2c^2_{\kappa}}}\left\|\sum_{j=1}^m \grad f_{\sB_j^i}(\vx_j^i)\right\| \\
            \mathop{\le}^{\circled{2}} &\sqrt{\frac{\delta_{m,i}-\delta_{m,i-1}}{4G^2imd\Gamma}} \left[\sum_{j=1}^m \grad f_{\sB_j^i}(\vx_j^i)\right]^\top \left(\mG_{m,i}-\mG_{m,i-1}\right)^{-\frac{1}{2}}\left[\sum_{j=1}^m \grad f_{\sB_j^i}(\vx_j^i)\right]\\
            \mathop{\le}^{\circled{3}} &\left(\sum_{j=1}^m \grad f_{\sB_j^i}(\vx_j^i)\right)^\top\mG_{m,i}^{-\frac{1}{2}}\left(\sum_{j=1}^m \grad f_{\sB_i^j}(\vx_i^j)\right),
        \end{aligned}
    \end{equation}
    where $\circled{1}$ is established due to Eq.~\ref{ineq:delta_properties}, $\circled{2}$ follows from Lemma~\ref{lem:gradient_norm_upper_bound_component2} and $\circled{3}$ follows from Lemma~\ref{lem:gradient_norm_upper_bound_component1}.
    Hence, we obtain
    \begin{equation}
        \label{ineq:comb_gradient_norm_convergence_1}
        \begin{aligned}[b]
            &\frac{c_{\sigma}}{48Gc_{\kappa}\sqrt{nd\Gamma}}\cdot\frac{\eta_{.,i}}{\sqrt{i}}\left\|\sum_{j=1}^m \grad f_{\sB_j^i}(\vx_j^i)\right\| \le \frac{\eta_{.,i}}{4m}\left(\sum_{j=1}^m \grad f_{\sB_j^i}(\vx_j^i)\right)^\top\mG_{m,i}^{-\frac{1}{2}}\left(\sum_{j=1}^m \grad f_{\sB_j^i}(\vx_j^i)\right)\\
            \mathop{\le}^{\circled{1}} &f(\vx_1^i) - f(\vx_{m+1}^i) + \frac{2\eta_{.,i}^3L^2G^2m^3}{3\lambda_{min}^{1.5}\left(\mG_{1,i}\right)} + \frac{3\eta_{.,i}m^{1.5}G^3}{\sqrt{2}\lambda_{min}\left(\mG_{1,i}\right)}\\
            \mathop{\le}^{\circled{2}} & f(\vx_1^i) - f(\vx_{m+1}^i) + \frac{128\eta^3_{.,i}L^2G^2n^{1.5}\Gamma^{1.5}}{3c^3_{\sigma}(i-1)^{1.5}} + \frac{24\sqrt{2}\eta_{.,i}n\Gamma G^3}{c_{\sigma}^2(i-1)m^{0.5}},
        \end{aligned}
    \end{equation}
    where $\circled{1}$ follows from Lemma~\ref{lem:sufficient_descent}, and $\circled{2}$ is established when $i\ge 2$ due to the fact 
    \begin{equation}
        \label{ineq:lambda_inequalities}
        \begin{split}
            &\lambda_{min}\left(\mG_{1,i}\right)\ge \lambda_{min}\left(\mG_{m,i-1}\right) \ge \frac{\delta_{m,i-1}}{\Gamma} \ge \frac{\sum_{j=1}^{i-1}\sum_{k=1}^m \left\|\grad f_{\sB_k^j}(\vx_k^j)\right\|^2}{\Gamma} \mathop{\ge}^{\circled{1}} \frac{(i-1)c_\sigma^2m^2}{16n\Gamma}
        \end{split}
    \end{equation}
    Inequality $\circled{1}$ in Eq.~\ref{ineq:lambda_inequalities} follows from Lemma~\ref{lem:G_trace_lower_bound}.
    It should be notice that when $i=1$, there is
    \begin{equation}
        \label{ineq:comb_gradient_norm_convergence_2}
        \begin{aligned}[b]
            \frac{c_{\sigma}}{48Gc_{\kappa}\sqrt{nd\Gamma}}\cdot \eta_{.,1}\left\|\sum_{j=1}^m \grad f_{\sB_j^1}(\vx_j^1)\right\| \mathop{\le}^{\circled{1}}  &f(\vx_1^1) - f(\vx_{m+1}^1) + \frac{2\eta^3_{.,t}L^2m^3}{3\sqrt{\lambda_{min}\left(\mG_{m,t}\right)}}+ \frac{3\eta_{.,t}m^{1.5}G}{\sqrt{2}}\\
            \le & f(\vx_1^1) - f(\vx_{m+1}^1) + \frac{8\eta_{.,1}^3L^2m^2n^{0.5}\Gamma^{0.5}}{3c_\sigma} + \frac{3\eta_{.,t}m^{1.5}G}{\sqrt{2}}
        \end{aligned}
    \end{equation}
    where $\circled{1}$ follows from Lemma~\ref{lem:sufficient_descent}.
    To achieve some stationary point through \radagrad, for each epoch, we have 
    \begin{equation}
        \label{ineq:gradient_norm_convergence_1}
        \begin{aligned}[b]
            & \frac{1}{\sqrt{i}}\left\|\sum_{j=1}^m \grad f_{\sB_j^i}(\vx_1^i)\right\|\le \frac{1}{\sqrt{i}}\left[\left\|\sum_{j=1}^m \grad f_{\sB_j^i}(\vx_j^i)\right\| + \left\|\sum_{j=1}^m\left(\grad f_{\sB_j^i}(\vx_j^i)-\grad f_{\sB_j^i}(\vx_1^i)\right)\right\|\right]\\
            \le & \frac{1}{\sqrt{i}}\left\|\sum_{j=1}^m \grad f_{\sB_j^i}(\vx_j^i)\right\| + \frac{1}{\sqrt{i}}\sum_{j=1}^m\left\|\grad f_{\sB_j^i}(\vx_j^i)-\grad f_{\sB_j^i}(\vx_1^i)\right\|\le \frac {1}{\sqrt{i}}\left\|\sum_{j=1}^m \grad f_{\sB_j^i}(\vx_j^i)\right\| + \frac{L}{\sqrt{i}} \sum_{j=1}^m\left\|\vx_j^i-\vx_1^i\right\|\\
            \mathop{\le}^{\circled{1}} & \frac{1}{\sqrt{i}}\left\|\sum_{j=1}^m \grad f_{\sB_j^i}(\vx_j^i)\right\| + \frac{L}{\sqrt{i}}\sum_{j=1}^m \left(\min\left\{\frac{\eta_{.,i}(j-1)G}{\sqrt{\lambda_{min}\left(\mG_{1,i}\right)}}, \eta_{.,i}(j-1)\right\}\right) \\
            \mathop{\le}^{\circled{2}} &\frac{1}{\sqrt{i}}\left\|\sum_{j=1}^m \grad f_{\sB_j^i}(\vx_j^i)\right\| + \frac{2LG\eta_{.,i}m n^{0.5}\Gamma^{0.5}}{c_{\sigma}(i-1)},
        \end{aligned}
    \end{equation}
    where $\circled{1}$ follows from Lemma~\ref{lem:bound_of_parameters} and $\circled{2}$ follows from Eq.~\ref{ineq:lambda_inequalities} when $i\ge 2$. 
    Notice that if $i=1$, we have
    \begin{equation}
        \label{ineq:ineq:gradient_norm_convergence_i1}
        \begin{aligned}[b]
            \left\|\sum_{j=1}^m \grad f_{\sB_j^1}(\vx_1^1)\right\| \le \left\|\sum_{j=1}^m \grad f_{\sB_j^1}(\vx_j^1)\right\| + L\eta_{.,i}m^2
        \end{aligned}
    \end{equation}
    Thus, combining Eq.~\ref{ineq:gradient_norm_convergence_1} with Eq.~\ref{ineq:comb_gradient_norm_convergence_1}, when $i\ge 2$, we obtain
    \begin{equation}
        \label{ineq:gradient_norm_convegence_2}
        \begin{aligned}[b]
            &\frac{\eta_{.,i}}{\sqrt{i}}\left\|\sum_{j=1}^m \grad f_{\sB_j^i}(\vx_1^i)\right\| \le \frac{\eta_{.,i}}{\sqrt{i}}\left\|\sum_{j=1}^m \grad f_{\sB_j^i}(\vx_j^i)\right\| + \frac{2LG\eta^2_{.,i}m n^{0.5}\Gamma^{0.5}}{c_{\sigma}(i-1)}\\
            \mathop{\le}^{\circled{1}} & \frac{48Gc_{\kappa}\sqrt{nd\Gamma}}{c_{\sigma}} \cdot \left[ f(\vx_1^i) - f(\vx_{m+1}^i) + \frac{128L^2G^2\eta^3_{.,i}n^{1.5}\Gamma^{1.5}}{3c^3_{\sigma}(i-1)^{1.5}} + \frac{24\sqrt{2}G^3\eta_{.,t}n\Gamma}{c_{\sigma}^2(i-1)m^{0.5}}\right]+ \frac{2LG\eta^2_{.,i}m n^{0.5}\Gamma^{0.5}}{c_{\sigma}(i-1)},
        \end{aligned}
    \end{equation}
    where $\circled{1}$ follows from Eq.~\ref{ineq:comb_gradient_norm_convergence_1}.
    Then, we set $\eta_{.,i} = \eta$ for all $1\le i\le T$. 
    Summing up Eq.~\ref{ineq:gradient_norm_convegence_2} for $1\le i\le T$ and dividing both sides by $m\eta$, we obtain
    \begin{equation}
        \label{ineq:gradient_norm_convergence_3}
        \begin{aligned}[b]
            & \sum_{i=1}^T \frac{1}{m\sqrt{i}}\left\|\sum_{j=1}^m \grad f_{\sB_j^i}(\vx_1^i)\right\| = \frac{1}{m}\left\|\sum_{j=1}^m \grad f_{\sB_j^1}(\vx_1^1)\right\| + \sum_{i=2}^T \frac{1}{m\sqrt{i}}\left\|\sum_{j=1}^m \grad f_{\sB_j^i}(\vx_1^i)\right\|\\
            \mathop{\le}^{\circled{1}} & \frac{1}{m} \left\|\sum_{j=1}^m \grad f_{\sB_j^1}(\vx_j^1)\right\| + L\eta m+ \sum_{i=2}^T \frac{1}{m\sqrt{i}}\left\|\sum_{j=1}^m \grad f_{\sB_j^i}(\vx_1^i)\right\|\\
            \mathop{\le}^{\circled{2}} &  \frac{48Gc_{\kappa}\sqrt{nd\Gamma}}{c_{\sigma}m\eta}\left[f(\vx_1^1) - f(\vx_{m+1}^1) + \frac{8\eta^3L^2m^2n^{0.5}\Gamma^{0.5}}{3c_\sigma} + \frac{3\eta m^{1.5}G}{\sqrt{2}}\right] + L\eta m + \sum_{i=2}^T \frac{1}{m\sqrt{i}}\left\|\sum_{j=1}^m \grad f_{\sB_j^i}(\vx_1^i)\right\|\\
            \mathop{\le}^{\circled{3}} & \frac{48Gc_{\kappa}\sqrt{nd\Gamma}}{c_{\sigma}m\eta}\left[f(\vx_1^1) - f(\vx_{m+1}^1) + \frac{8\eta^3L^2m^2n^{0.5}\Gamma^{0.5}}{3c_\sigma} + \frac{3\eta m^{1.5}G}{\sqrt{2}}\right] + L\eta m\\
            & +\frac{48Gc_{\kappa}\sqrt{nd\Gamma}}{c_{\sigma}m\eta}\left[f(\vx_1^2) - f(\vx_{m+1}^T) + \sum_{i=2}^{T}\left(\frac{128\eta^3 L^2G^2n^{1.5}\Gamma^{1.5}}{3c^3_{\sigma}(i-1)^{1.5}} + \frac{24\sqrt{2}\eta n\Gamma G^3}{c_{\sigma}^2(i-1)m^{0.5}}\right)\right] + \sum_{i=2}^T \left( \frac{2LG\eta n^{0.5}\Gamma^{0.5}}{c_{\sigma}(i-1)}\right)\\
            = & \frac{48Gc_{\kappa}\sqrt{nd\Gamma}}{c_{\sigma}m} \cdot \left[\frac{f(\vx_1^1) - f(\vx_{m+1}^T)}{\eta} + \frac{8\eta^2L^2m^2n^{0.5}\Gamma^{0.5}}{3c_\sigma} + \frac{3 m^{1.5}G}{\sqrt{2}} \right]+L\eta m\\
            & + \frac{48Gc_{\kappa}\sqrt{nd\Gamma}}{c_{\sigma}m} \cdot \sum_{i=2}^{T}\left(\frac{128\eta^2L^2G^2n^{1.5}\Gamma^{1.5}}{3c^3_{\sigma}(i-1)^{1.5}} + \frac{24\sqrt{2}G^3n\Gamma}{c_{\sigma}^2(i-1)m^{0.5}}\right) + \sum_{i=2}^T \left( \frac{2LG\eta n^{0.5}\Gamma^{0.5}}{c_{\sigma}(i-1)}\right)\\
            \le & \frac{48Gc_{\kappa}\sqrt{nd\Gamma}}{c_{\sigma}m} \cdot \left[\frac{f(\vx_1^1) - f(\vx_{m+1}^T)}{\eta} + \frac{8\eta^2L^2m^2n^{0.5}\Gamma^{0.5}}{3c_\sigma} + \frac{3 m^{1.5}G}{\sqrt{2}}\right]+L\eta m\\
            & +\frac{48Gc_{\kappa}\sqrt{nd\Gamma}}{c_{\sigma}m} \cdot \left[\frac{128\eta^2G^2n^{1.5}\Gamma^{1.5}}{c^3_{\sigma}} + \frac{48\sqrt{2}G^3n\Gamma\ln(T)}{c_{\sigma}^2m^{0.5}} \right] + \frac{4LG\eta n^{0.5}\Gamma^{0.5}\ln(T)}{c_{\sigma}},
        \end{aligned}
    \end{equation}
    where $\circled{1}$ follows from Eq.~\ref{ineq:ineq:gradient_norm_convergence_i1}, $\circled{2}$ follows from Eq.~\ref{ineq:comb_gradient_norm_convergence_2} and $\circled{3}$ follows from Eq.~\ref{ineq:gradient_norm_convegence_2}. With the following constants
    \begin{equation}
        \label{eq:constant_definition}
        \begin{aligned}[b]
            & C_0 = \frac{48Gc_{\kappa}\left[f(\vx_1^1) - f(\vx_{m+1}^T)\right]}{c_{\sigma}} \cdot n^{0.5}m^{-1}d^{0.5}\Gamma^{0.5},\quad C_1 = \frac{72\sqrt{2}G^2c_{\kappa}}{c_{\sigma}}\cdot n^{0.5}m^{0.5}d^{0.5}\Gamma^{0.5},\\
            & C_2 = L m,\quad C_3 = \frac{128G L^2 c_{\kappa}}{c^2_{\sigma}}\cdot nmd^{0.5}\Gamma+\frac{3\cdot 2^{11}G^3c_{\kappa}}{c_{\sigma}^{4}}\cdot n^2m^{-1}d^{0.5}\Gamma^{2},\\
            & C_4 = \frac{3^2\cdot 2^8 \sqrt{2}G^4c_{\kappa}}{c_{\sigma}^3} \cdot n^{1.5}m^{-1.5}d^{0.5}\Gamma^{1.5},\quad C_5 = \frac{4LG}{c_{\sigma}}\cdot n^{0.5} \Gamma^{0.5}d^{0.5},
        \end{aligned}
    \end{equation}
    we have
    \begin{equation*}
        \label{ineq:gradient_norm_convergence_4}
        \begin{split}
            \mathbb{E}_t\left[\left\|\grad f(\vx_1^t)\right\|\right] = \frac{\sum_{i=1}^T \frac{1}{\sqrt{i}}\left\|\sum_{j=1}^m \frac{1}{m} \grad f_{\sB_j^i}(\vx_1^i)\right\|}{\sum_{i=1}^T\frac{1}{\sqrt{i}}} \le \frac{C_0}{\eta \sqrt{T}} + \frac{C_1}{\sqrt{T}} + \frac{C_2\eta}{\sqrt{T}} + \frac{C_3\eta^2}{\sqrt{T}} + \frac{C_4\ln(T)}{\sqrt{T}} + \frac{C_5\eta\ln(T)}{\sqrt{T}},
        \end{split}
    \end{equation*}
    if we sample from $\sI_T$ with probability $\mathbb{P}\left[\rx=i\right] = \frac{1}{\sqrt{i}}$.
    It means we achieve some stationary points ($\left\|\grad f(\vx)\right\|\le \epsilon$) within $\tilde{O}(T^{-0.5})$ in expectation when we set the step size as $\eta = \Theta(1)$.
\end{proof}
\begin{corollary}
    Let $\left\{\vx_i^t\right\}$ be the sequence generated by \radagrad and $\vx_{out}$ be its output. 
    For given tolerance $\epsilon > 0$, under the same conditions as Theorem~\ref{thm:adagrad_convergence_rate}, if we choose the constant learning rate $\eta = \frac{c_{\sigma}^2}{16nLG}$, $\Gamma = m$ and the number of iteration in each epoch $m = n$, then to guarantee
    $$\mathbb{E}_t\left[\left\|\grad f(\vx_1^t)\right\|\right] = \frac{\sum_{i=1}^T \frac{1}{\sqrt{i}}\left\|\sum_{j=1}^m \frac{1}{m} \grad f_{\sB_j^i}(\vx_1^i)\right\|}{\sum_{i=1}^T\frac{1}{\sqrt{i}}} \le \epsilon,$$
    it requires nearly $T = \lfloor 36C_{max}n^3 d \epsilon^{-2}\rfloor$ outer iterations, where $C_{max}$ is set as
    \begin{equation}
        \label{eq:max_constant_definition}
        \begin{aligned}[b]
            C_{max} = \max\left\{\frac{384 LG^2 c_{\kappa}\left(f(\vx_1^1 - f^*)\right)}{c_{\sigma}^3}, \frac{72\sqrt{2}G^2c_{\kappa}}{c_{\sigma}},\frac{c^2_{\sigma}}{2G},\frac{24Gc_{\kappa}}{L^2},\frac{3^2\cdot 2^8 \sqrt{2}G^4c_{\kappa}}{c_{\sigma}^3}, \frac{c_{\sigma}}{4}\right\}
        \end{aligned}
    \end{equation}
    In expectation, the total number of gradient evaluation is nearly $\mathcal{T} = \Big\lfloor 36\left[1-\exp\left(-\frac{c_{\sigma}^{4}}{32G^4}\right)\right]^{-1}C_{max}n^4d\epsilon^{-2} \Big\rfloor$.
\end{corollary}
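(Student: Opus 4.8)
The plan is to obtain the iteration count by specializing Theorem~\ref{thm:adagrad_convergence_rate} to the prescribed hyper-parameters and inverting the resulting $1/\sqrt{T}$ rate, and then to convert iterations into component-gradient evaluations by charging the rejection-sampling loop of Algorithm~\ref{alg:radagrad_full_matrices} against the Hoeffding estimate already contained in Lemma~\ref{lem:G_trace_lower_bound}. First I would substitute $\eta=c_\sigma^2/(16nLG)$, $\Gamma=m$ and $m=n$ into the explicit constants $C_0,\dots,C_5$ of Eq.~\ref{eq:constant_definition}. The powers of $n$ collapse cleanly (e.g.\ $n^{0.5}m^{-1}\Gamma^{0.5}\to 1$, $n^{0.5}m^{0.5}\Gamma^{0.5}\to n^{1.5}$, $n^{1.5}m^{-1.5}\Gamma^{1.5}\to n^{1.5}$, and similarly for the remaining $C_i$), and after bounding the only trajectory-dependent quantity by $f(\vx_1^1)-f(\vx_{m+1}^T)\le\Delta:=f(\vx_1^1)-f^*$, each summand on the right of the theorem takes the form $\tilde C_i\,(\ln T)^{b_i}/\sqrt{T}$ with $b_i\in\{0,1\}$ and $\tilde C_i$ a product of $L,G,c_\sigma,c_\kappa,\Delta$ with a fixed power of $n$ and of $d$. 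The key point I would record is that the largest power appearing is $n^3 d$, carried by the summands descending from $C_1$ and from $C_4$.

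Next I would impose $\tilde C_i\,(\ln T)^{b_i}/\sqrt{T}\le\epsilon/6$ for each of the six summands. For the four terms with $b_i=0$ this reads $T\ge 36\,\tilde C_i^2/\epsilon^2$, and by the previous step each such bound is of the shape $\mathrm{const}\cdot n^{a}d\,\epsilon^{-2}$ with $a\le 3$, the constant being one of the quantities collected in $C_{\max}$ in Eq.~\ref{eq:max_constant_definition}; since $n\ge 1$ all of them are dominated by $36\,C_{\max}\,n^3 d\,\epsilon^{-2}$. For the two terms with $b_i=1$ I would use monotonicity of $t\mapsto\ln t/\sqrt{t}$ on $[e^2,\infty)$ to conclude that they only force the asymptotically lower-order requirement $T\gtrsim\epsilon^{-2}\log^2(1/\epsilon)$, which is subsumed by the leading term and is precisely the logarithmic slack hidden behind the word ``nearly'' (and behind the $\tilde O$ of the abstract). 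Taking the maximum of the six conditions yields $T=\lfloor 36\,C_{\max}\,n^3 d\,\epsilon^{-2}\rfloor$, and for this $T$ the theorem gives $\mathbb{E}_\tau[\|\grad f(\vx_1^\tau)\|]\le\epsilon$ with $\vx_{out}=\vx_1^\tau$, $\mathbb{P}[\tau=i]\propto i^{-1/2}$.

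For the gradient budget I would price one epoch. The inner \textbf{for} loop evaluates the $m$ mini-batch gradients $\grad f_{\sB_i^t}(\vx_i^t)$, i.e.\ $\sum_i|\sB_i^t|=n$ component gradients; each pass of the \textbf{while} loop of Steps~6--8 draws a fresh shuffle and forms $\sigma_p=\sum_{j=1}^m\|\grad f_{\sB_j^t}(\vx_1^t)\|^2$, again $n$ component gradients. The passes use independent shuffles, and by Lemma~\ref{lem:G_trace_lower_bound} (concretely Eq.~\ref{ineq:s_i_hoeffding_bound}) each pass exits with probability at least $1-\exp(-m^2c_\sigma^4/(32 n^2 G^4))$, which equals $1-\exp(-c_\sigma^4/(32 G^4))$ once $m=n$; hence the number of passes is stochastically dominated by a geometric variable of mean $[1-\exp(-c_\sigma^4/(32 G^4))]^{-1}$. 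The expected per-epoch cost is therefore at most $n\bigl(1+[1-\exp(-c_\sigma^4/(32 G^4))]^{-1}\bigr)$, and multiplying by $T=\lfloor 36\,C_{\max}\,n^3 d\,\epsilon^{-2}\rfloor$ while folding the lower-order ``$+n$'' into ``nearly'' gives $\mathcal{T}=\bigl\lfloor 36\,[1-\exp(-c_\sigma^4/(32 G^4))]^{-1}\,C_{\max}\,n^4 d\,\epsilon^{-2}\bigr\rfloor$.

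The step I expect to be the real obstacle is the constant bookkeeping of the middle paragraph: after the substitution $m=\Gamma=n$ one must check carefully that no term of the theorem's bound secretly carries a power of $n$ or $d$ beyond $n^3 d$ (otherwise the clean closed form $T=\Theta(n^3 d\,\epsilon^{-2})$ would be wrong), and one must confirm that the two $\ln T$ terms are genuinely of lower order so that the slack they require is legitimately absorbed into ``nearly''. Everything else is mechanical: the per-epoch descent is Lemma~\ref{lem:sufficient_descent}, its telescoped and reweighted form is Theorem~\ref{thm:adagrad_convergence_rate}, and the geometric-tail control of the rejection loop is Lemma~\ref{lem:G_trace_lower_bound}.
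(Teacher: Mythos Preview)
Your proposal is correct and follows essentially the same route as the paper's own proof: substitute the stated hyper-parameters into the constants $C_0,\dots,C_5$ of Theorem~\ref{thm:adagrad_convergence_rate}, observe that the dominant power after substitution is $n^{1.5}\sqrt{d}$ (carried by the $C_1$ and $C_4$ terms), invert the $1/\sqrt{T}$ rate while treating the $\ln T$ factors as the ``nearly'' slack, and then multiply by the expected per-epoch gradient cost coming from the geometric rejection-sampling tail of Lemma~\ref{lem:G_trace_lower_bound}. The only cosmetic difference is that the paper first bounds all six summands uniformly by $6C_{\max}n^{1.5}\sqrt{d}\,\ln(T)/\sqrt{T}$ and inverts once, whereas you impose $\le\epsilon/6$ on each summand separately and then take the maximum; your accounting of the inner \textbf{for}-loop cost ($+n$ per epoch) is in fact slightly more careful than the paper's, which simply writes $n/p$ per epoch.
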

\begin{proof}
    According to Theorem~\ref{thm:adagrad_convergence_rate}, if we set $\eta = \frac{c_{\sigma}^2}{16nLG}$, $\Gamma = m$ and $m=n$, we obtain that
    \begin{equation}
        \label{ineq:exp_gradient_upper_bound}
        \begin{aligned}[b]
            \mathbb{E}_t\left[\left\|\grad f(\vx_1^t)\right\|\right] \le &\frac{384 LG^2 c_{\kappa}\left(f(\vx_1^1 - f^*)\right)}{c_{\sigma}^3}\cdot \sqrt{\frac{n^2d}{T}} + \frac{72\sqrt{2}G^2c_{\kappa}}{c_{\sigma}}\cdot \sqrt{\frac{n^3d}{T}} + \frac{c^2_{\sigma}}{16G}\cdot\frac{1}{\sqrt{T}}\\
            & + \left(\frac{c^2_{\sigma}}{2G}+\frac{24Gc_{\kappa}}{L^2}\right)\cdot \sqrt{\frac{n^2d}{T}} + \frac{3^2\cdot 2^8 \sqrt{2}G^4c_{\kappa}}{c_{\sigma}^3} \cdot \sqrt{\frac{n^3d}{T}}\ln(T)+ \frac{c_{\sigma}}{4}\cdot \sqrt{\frac{d}{T}}\ln(T)\\
            \le & 6C_{max} \cdot n^{1.5}\sqrt{d} \cdot \frac{\ln(T)}{\sqrt{T}}.
        \end{aligned}
    \end{equation}
    Hence, a sufficient condition for achieving \emph{FSP}s ($\mathbb{E}_t\left[\left\|\grad f(\vx_1^t)\right\|\right] \le \epsilon$) for the objective can be presented as
    \begin{equation}
        \label{ineq:sufficient_condition_for_convegrence}
        6C_{max} \cdot n^{1.5}\sqrt{d} \cdot \frac{\ln(T)}{\sqrt{T}}\le \epsilon \mathop{\Longleftrightarrow}^{\circled{1}} T\ge 36C_{max}n^3d\epsilon^{-2},
    \end{equation}
    where $\circled{1}$ is established when we ignore the $ln$ term.

    Besides, for the inner loops, we utilize the rejection sampling to provide a lower bound of $\delta_{m,t}$.
    According to Lemma~\ref{lem:G_trace_lower_bound}, we can notice that probability of success is at least $p \coloneqq 1-\exp\left(-\frac{m^2c^4_{\sigma}}{32n^2G^4}\right)$ in every trial (a Bernoulli distribution).
    Then, let $\rr$ be a random variable that indicates number of trials until success.
    The expectation of $\rr$ is 
    \begin{equation}
        \mathbb{E}\left[\rr\right] = \sum_{j=1}^{\infty} jp(1-p)^{j-1} = 1/p,\quad when\ p\in(0,1).
    \end{equation}
    As a result, it requires $\left[1-\exp\left(-\frac{c_{\sigma}^{4}}{32G^4}\right)\right]^{-1}n$ gradient evaluation for each epoch, and the total number of gradient evaluation is nearly $\mathcal{T} = \Big\lfloor 36\left[1-\exp\left(-\frac{c_{\sigma}^{4}}{32G^4}\right)\right]^{-1}C_{max}n^4d\epsilon^{-2} \Big\rfloor$ in expectation.
\end{proof}
\newpage
\section{The CNN Architecture of the Experiments}
Our model architecture is illustrated in Figure~\ref{fig:cnn_model}.
The first convolution layer consumes the input image and produce 6-channel output with a $5 \times 5$ convolution kernel. Then a $2 \times 2$ max-pooling layer is utilized, followed by another $5 \times 5$  convolution layer which produces 10-channel output. After two feed-forward layer with $10$ units, we predict the classification result using softmax.
\begin{figure*}[!tb]
    \centering
    \includegraphics[width=0.7\textwidth]{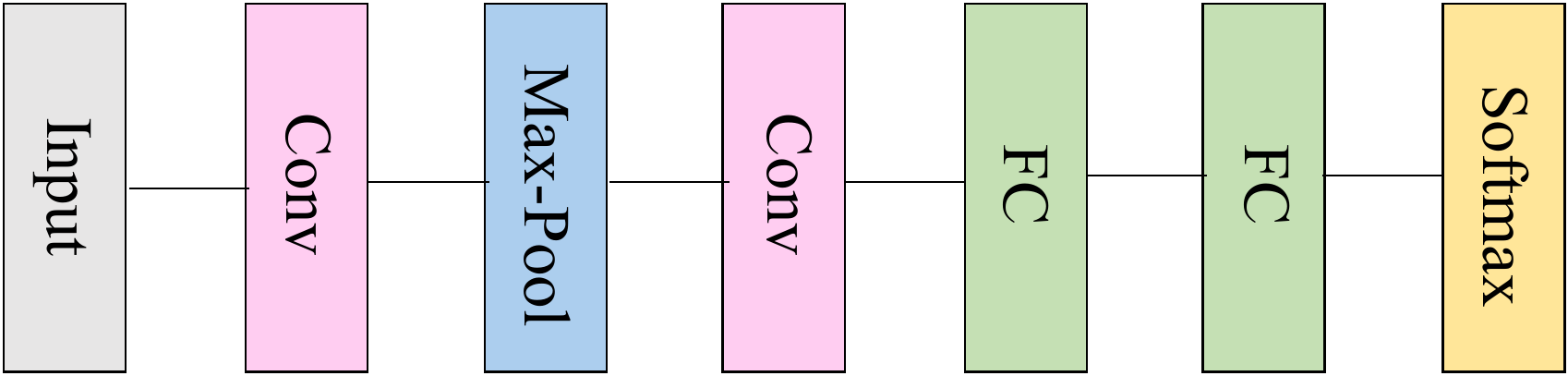}
    \caption{The architecture of CNN in our experiments}
    \label{fig:cnn_model}
\end{figure*}

\end{document}